\newtheorem{theorem}{Theorem}[section]
\newtheorem{lemma}[theorem]{Lemma}
\newtheorem{corollary}[theorem]{Corollary}
\newtheorem{theoremletter}{Theorem}
 \theoremstyle{definition}
 \newtheorem{definition}[theorem]{Definition}
 \newtheorem{example}[theorem]{Example}
  \newtheorem*{example*}{Example}
\numberwithin{equation}{section}
\newcommand {\N}{\mathbb{N}} 
\newcommand {\Z}{\mathbb{Z}}
\newcommand{\G}{\mathbb{G}}
\newcommand{\LL}{\mathcal{L}}
\DeclareMathOperator{\Ker}{Ker}
\DeclareMathOperator{\End}{End}
\DeclareMathOperator{\im}{Im}
\DeclareMathOperator{\Id}{Id}
\begin{document}
\title[Pointwise and decidable properties of NUCA]{Some pointwise and decidable properties of non-uniform cellular automata}    
\author[Xuan Kien Phung]{Xuan Kien Phung}
\address{Département d'informatique et de recherche opérationnelle,  Université de Montréal, Montréal, Québec, H3T 1J4, Canada.}
\email{phungxuankien1@gmail.com}   
\subjclass[2020]{05C25, 20F69, 37B10, 37B15, 37B51, 68Q80}
\keywords{decidability, non-uniform cellular automata}

\begin{abstract}
For non-uniform cellular automata (NUCA) with finite memory over an arbitrary universe with multiple local transition rules, we show that pointwise nilpotency, pointwise periodicity, and pointwise eventual periodicity properties are respectively equivalent to nilpotency, periodicity, and eventual periodicity. Moreover, we prove that every linear NUCA which satisfies pointwise a polynomial equation (which may depend on the configuration) must be an eventually periodic linear NUCA. Generalizing results for higher dimensional group and linear CA, we also establish the decidability results of the above dynamical properties as well as the injectivity for arbitrary  NUCA with finite memory which are local perturbations of higher dimensional linear and group CA. Some generalizations to the case of sparse global perturbations of higher dimensional linear and group CA are also obtained. 
\end{abstract}

\date{\today}
\maketitle
%\tableofcontents 
  
\setcounter{tocdepth}{1}

\section{Introduction}
\par 
Since the original works of von Neumann and Ulam \cite{neumann}, cellular automata (CA) arises as a powerful model of various physical, biological, and social phenomena   where homogeneous cells evolve according to the same local transition rule. As a model of distributed computing, CA are known to be Turing complete \cite{GOL}. Whenever the  cells happen to follow, either intentionally or as a result of some undesirable perturbation, different local transition rules, we obtain the more general class of non-uniform CA (NUCA) where  the uniformity between the cells is broken. It is therefore important to understand the behaviour of the class of NUCA especially those which are local perturbations of CA. For instance, it was shown in \cite{phung-shadowing} and \cite{phung-dual-nuca} that not only higher dimensional linear CA but the class of higher dimensional linear NUCA also satisfy the remarkable pseudo-orbit tracing property. While almost all  properties of CA are undecidable \cite{kari-nilpotent}, \cite{kari-reversible}, results in \cite{formenti-hoca}, \cite{formenti-ergodicity}, \cite{kari-beaur} show that for linear and more generally group CA, many dynamical properties turn out to be decidable. In this article, we will show that several  dynamical properties are still decidable for local perturbations, and more generally, sparse global perturbations of higher dimensional  linear CA (Theorem~\ref{t:intro-decidable-local-perturbation-lnuca}, Theorem~\ref{t:intro-decidable-sparse-perturbation-lnuca}). We also establish a general result saying that many pointwise properties of NUCA are equivalent to their uniform counterparts (Theorem~\ref{t:intro-pointwise-cayley-hamilton-linear-nuca}, Theorem~\ref{t:intro-pointwise-uniform-nuca}). 
\par 
To state the main results, recall that a CA over a group $G$ (the \emph{universe}) and a set $A$ (the \emph{alphabet}) as  a self-map $A^G \righttoleftarrow$ which is $G$-equivariant and uniformly continuous (cf.~\cite{hedlund-csc}, \cite{hedlund}). 
Here, we equip the \emph{full shift} $A^G$, i.e., the set of all maps $c \colon G \to A$ called \emph{configurations},  with the prodiscrete topology. The \emph{Bernoulli shift} action $G \times A^G \to A^G$ is defined by $(g,x) \mapsto g x$, 
where $(gx)(h) =  x(g^{-1}h)$ for  $g,h \in G$ and $x \in A^G$. 
When different cells are allowed to admit different local transition maps, we have the more general notion of NUCA (cf. \cite[Definition~1.1]{phung-tcs}, \cite{Den-12a}, \cite{Den-12b}).   

\begin{definition}
\label{d:most-general-def-asyn-ca}
Let $G$ be a group and let  $A$ be a set. Let $M \subset G$ be a subset and let $S = A^{A^M}$ be the set of all maps $A^M \to A$. Given $s \in S^G$, the NUCA $\sigma_s \colon A^G \to A^G$ is defined for all $x \in A^G$ and $g \in G$ by the formula 
\begin{equation*}
\sigma_s(x)(g)=  
    s(g)((g^{-1}x)  
	\vert_M). 
 \end{equation*}
 \end{definition} 
\par 
The set $M$ is called a \emph{memory} and $s \in S^G$  the \textit{configuration of local defining maps} of $\sigma_s$. It follows that  every CA is a NUCA with finite memory and constant configuration of local defining maps. 
\par 
For $x \in A^G$, let $\Sigma(x) = \overline{\{gx \colon g \in G\}} \subset A^G$ be the smallest subshift which contains $x$. Two configurations $x,y  \in A^G$ are \emph{asymptotic} if  $x\vert_{G \setminus E}=y\vert_{G \setminus E}$ for some finite subset $E \subset G$.  
Following \cite{phung-tcs}, we say that $\sigma_s$ is \emph{post-surjective} if for all $x, y \in A^G$ with $y$ asymptotic to $\sigma_s(x)$, there exists $z \in A^G$ asymptotic to $x$ such that  $ \sigma_s(z)=y$. 
Recall also the following class $\mathrm{LNUCA}_c$ (cf.~\cite[Definition~1.2]{phung-twisted-group-ring}) consisting of  local perturbations of linear CA. 
\begin{definition}
Let  $G$ be a group $G$ and let $V$ be a vector space $V$. We denote by $\mathrm{LNUCA}_{c}(G, V)$ the space of all linear NUCA $\tau \colon V^G \to V^G$ with finite memory which admit asymptotically constant configurations of local defining maps, i.e., $\tau \in \mathrm{LNUCA}_{c}(G, V)$ if there exist  finite subsets $M, E \subset G$ and $s \in \LL(V^M, V)^G$ such that $\tau=\sigma_s$ and $s(g)=s(h)$ for all $g,h \in G \setminus E$.  
\end{definition}
\par 
Our first main result is the following decidability theorem (see also Theorem~\ref{t:general-decidable-local-perturbation-lnuca} for a more general statement) which generalizes the corresponding result for linear CA in \cite{kari-beaur}.  
\begin{theoremletter}
\label{t:intro-decidable-local-perturbation-lnuca} 
Let $G= \Z^d$, $d \geq 1$, and let $V$ be a finite vector space. Let $s \in \mathrm{LNUCA}_c(G,V)$. Then it is decidable whether: 
 \begin{enumerate} [\rm (i)]
     \item  $\sigma_s$ is nilpotent; 
     \item  $\sigma_s$ is (eventually) periodic;  
     \item  $\sigma_s$ is Cayley-Hamilton. 
     \item 
     $\sigma_s$ is injective; 
     \item 
     $\sigma_s$ is post-surjective. 
 \end{enumerate}
\end{theoremletter}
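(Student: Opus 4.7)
The plan is to exploit the defining structure of $\mathrm{LNUCA}_c(G,V)$: by hypothesis, there exist finite sets $M, E \subset G$ and $s_0 \in \LL(V^M, V)$ such that $s(g) = s_0$ for every $g \in G \setminus E$, so we may write $\sigma_s = \tau + \delta$ where $\tau := \sigma_{s_0}$ is a genuine linear CA over $\Z^d$ with finite alphabet $V$, and $\delta$ is a \emph{finite-rank} linear perturbation that factors as the restriction $V^G \twoheadrightarrow V^{EM}$, a linear map $D \colon V^{EM} \to V^E$ recording the differences $s(g)-s_0$ for $g \in E$, and the extension-by-zero $V^E \hookrightarrow V^G$. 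This decomposition separates the ``uniform'' part $\tau$, to which the decidability results of Kari--Beaur \cite{kari-beaur} for higher-dimensional linear CA over finite fields apply, from a finite-dimensional perturbation encoded by the matrix $D$.

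For each of (i)--(v) I would proceed in two stages. In the first stage I show that the property for $\sigma_s$ forces the corresponding property for $\tau$. The mechanism is the following light-cone argument: for any $N \geq 1$, a cell $g \in G$ whose neighborhood $gM^{\leq N-1} := \bigcup_{k=0}^{N-1} gM^k$ is disjoint from $E$ satisfies $\sigma_s^N(x)(g) = \tau^N(x)(g)$ for every $x \in V^G$. Since only finitely many cells fail this condition, and $\tau$ is $\Z^d$-equivariant, any identity of the form $\sigma_s^N = 0$, $\sigma_s^n = \sigma_s^m$, or polynomial annihilation of $\sigma_s$ transfers to $\tau$ on all configurations by suitable translation; the analogous transfer handles injectivity and post-surjectivity. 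In the second stage, assuming $\tau$ has the property (which is decidable by \cite{kari-beaur}), I reduce the remaining question about $\sigma_s$ to a finite linear-algebra problem over the base field of $V$. For (i), once $\tau^N=0$, the image $\sigma_s^N(V^G)$ lies in a fixed finite-dimensional subspace $V^F$ with $F$ a bounded neighborhood of $E$, and further iteration of $\sigma_s$ is governed by an explicitly computable finite matrix whose nilpotency is decidable; a similar trapping into a finite-dimensional invariant subspace addresses (ii) and (iii) once $\tau$'s own period or annihilating polynomial has been factored out. For injectivity (iv) and post-surjectivity (v) I would invoke the algebraic descriptions of these properties for linear NUCA via matrices over the Laurent polynomial ring $\F[x_1^{\pm 1},\ldots,x_d^{\pm 1}]$ developed in \cite{phung-twisted-group-ring} and \cite{phung-tcs}, combined with the finite-rank correction $D$.

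The main technical obstacle is the second stage for (ii) and (iii): one must extract an \emph{effectively computable} upper bound on the window in which to search for a period or a polynomial annihilator of $\sigma_s$. This bound has to depend only on $|E|$, $|M|$, $\dim V$, and the period or degree of the corresponding identity for $\tau$; producing it requires pinning down how the finite-dimensional trapping subspace is generated and how the $\delta$-contributions spread before being absorbed by $\tau$'s nilpotent or periodic behaviour. Once this quantitative control is in place, each of (i)--(v) reduces to finitely many algebraic decidability questions that are handled by Kari--Beaur together with standard linear algebra over the finite base field of $V$.
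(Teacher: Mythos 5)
Your two-stage plan for (i)--(iii) is essentially the paper's: first show that the property passes from $\sigma_s$ to the underlying constant CA $\sigma_c$ (your light-cone transfer), decide it for $\sigma_c$ by Kari--Beaur, and then, when $\sigma_c$ has the property, reduce the residual question to finite linear algebra. But you stop at the step you yourself flag as the main obstacle, and it is worth recording that no quantitative window bound is actually needed to close it: after replacing $\sigma_s$ by $\sigma_t=\sigma_s^{n_0}$ with $\sigma_c^{n_0}=0$ (resp.\ $\sigma_c^{n_0}=\Id$, resp.\ $\sigma_c^{2n_0}=\sigma_c^{n_0}$), the local rules $t(g)$ for $g$ outside the finite set $EN$ are literally zero (resp.\ the projection $z\mapsto z(1_G)$, at least on the relevant image slice), so $\sigma_t$ acts as the identity outside the fixed finite window $EN^2$ and as an explicitly computable endomorphism $\varphi$ of a finite space $W$ inside it; nilpotency, periodicity, or eventual periodicity of a self-map of a finite set is then decided by brute-force iteration. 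Moreover (iii) requires no search for an annihilating polynomial: over a finite vector space, Cayley--Hamilton is equivalent to eventual periodicity (Theorem~\ref{t:finite-cayley-hamilton-linear-ca-periodic}, via linear recurrences over finite fields), so (iii) is literally (ii).

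The genuine gap is in (iv) and (v). A local perturbation destroys $\Z^d$-equivariance, so $\sigma_s$ is not a module endomorphism over $\F[x_1^{\pm 1},\dots,x_d^{\pm 1}]$, and there is no ``Laurent matrix plus finite-rank correction'' calculus from which injectivity can be read off; even for the unperturbed $\sigma_c$ in dimension $d\geq 2$, decidability of injectivity is a nontrivial theorem of \cite{kari-beaur} proved by subshift methods rather than by commutative algebra. The paper instead runs two semi-algorithms in parallel: if $\sigma_s$ is injective then, by amenability of $\Z^d$ and the invertibility results of \cite{phung-tcs}, it admits an inverse which is itself an asymptotically constant linear NUCA, so a witness is found by enumeration; if it is not injective, a nonzero kernel element is detected by effectively computing the trace of $\Ker\sigma_s$ on a growing window, which forces one to extend the effective constructions of \cite{kari-beaur} for group subshifts to monoid universes of the form $\N^{d_1}\times\Z^{d_2}$ (to handle kernels of $\sigma_c$ restricted to half-spaces), together with stabilization of descending chains of group subshifts and density of eventually periodic configurations. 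None of this machinery appears in your sketch, and without it the ``not injective'' side of the decision procedure is missing. Finally, (v) is not obtained by the same transfer as (iv): the paper reduces post-surjectivity of $\sigma_s$ to injectivity of the effectively computable dual linear NUCA $\sigma_s^*$ via the duality of \cite{phung-dual-nuca}; absent that step, your proposal gives no decision procedure for post-surjectivity at all.
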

\par 
Here, a linear NUCA $\tau$ is said to be nilpotent, resp. periodic, resp. eventually periodic, resp. Cayley-Hamilton if there exist  integers $m, n \geq 1$ and a nonzero polynomial $P$ such that $\tau^n=0$, resp. $\tau^n=1$, resp. $\tau^{m+n}=\tau^n$, resp. $P(\tau)=0$.   
In fact, we can easily modify  the proof of Theorem~\ref{t:intro-decidable-local-perturbation-lnuca} to obtain a more general result, except for the injectivity property, concerning the wider class of sparse global perturbations of linear CA 
(see Definition~\ref{d:sparse-perturbations}). More precisely, a linear NUCA $\sigma_s$ is a sparse global perturbation of a linear CA $\tau = \sigma_c$ if there exists a finite subset $E\subset G$ and a sequences $(g_n)_{n \geq 0}$ of elements in $G$ such that  $s\vert_{G \setminus F}= c\vert_{G \setminus F}$ where $F = \cup_{n \geq 0} g_n E$ and 
for every finite subset $D \subset G$, there exists $n_0\geq 0$ such that $g_mD  \cap g_n D = \varnothing$  for all $m, n \geq n_0$. Then our obtained result is as follows  (see Theorem~\ref{t:decidable-sparse-perturbation-lnuca-nilpotent}): 
\par 
\begin{theoremletter}
\label{t:intro-decidable-sparse-perturbation-lnuca} 
Let $M$ be a finite subset of $G= \Z^d$, $d \geq 1$, and let $V$ be a finite vector space. Let $s \in \LL(V^M,V)^G$ and suppose that $\sigma_s$ is a sparse global perturbation of a linear CA.  Then it is decidable whether: 
 \begin{enumerate} [\rm (i)]
     \item  $\sigma_s$ is nilpotent; 
     \item  $\sigma_s$ is (eventually)  periodic;  
     \item  $\sigma_s$ is Cayley-Hamilton. 
 \end{enumerate}
\end{theoremletter}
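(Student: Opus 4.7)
The plan is to reduce each of the three decidability questions for $\sigma_s$ to the corresponding question for the underlying linear CA $\tau = \sigma_c$, handled by the Kari--Beaur theorem for higher-dimensional linear CA, together with finitely many finite-dimensional local verifications at the perturbation sites. The key first step is a \emph{necessity lemma}: for every polynomial $P$, $P(\sigma_s) = 0$ implies $P(\tau) = 0$. Indeed, $P(\sigma_s)$ is itself a linear NUCA with some finite memory $M_P$ whose local rule at $g$ depends only on $s|_{g + M_P}$. Applying sparsity to the finite set $M_P$ produces some $h \in G$ with $(h + M_P) \cap F = \varnothing$, so $s|_{h + M_P} = c|_{h + M_P}$, giving $P(\sigma_s)(x)(h) = P(\tau)(x)(h) = 0$ for every $x$; translation invariance of $\tau$ then forces $P(\tau) \equiv 0$. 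Consequently, $\sigma_s$ nilpotent (resp.\ eventually periodic, resp.\ Cayley--Hamilton) forces $\tau$ to share the property, and Kari--Beaur provides decidability together with explicit witness polynomials for $\tau$.

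Given a candidate polynomial $P$ with $P(\tau) = 0$, I next reduce $P(\sigma_s) = 0$ to a finite list of local checks. Letting $M_P'$ be the memory of $P(\sigma_s)$, the equation $P(\sigma_s)(x)(g) = 0$ holds automatically when $(g + M_P') \cap F = \varnothing$, so only cells $g$ in the thickening $F + M_P'$ need to be examined. Applying the sparsity hypothesis to the finite set $D = E + M_P'$ yields $n_0$ such that the translates $g_n + D$, $n \geq n_0$, are pairwise disjoint; hence for $g$ near an isolated perturbation site $g_n$ with $n \geq n_0$, the environment $s|_{g + M_P'}$ is determined entirely by the \emph{type} $t_n := (s(g_n + e))_{e \in E}$, which lies in the finite set $\LL(V^M, V)^E$. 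Only finitely many types occur, and together with the finitely many exceptional clusters for indices $n < n_0$, this gives a finite list of distinct local conditions to verify, each a linear-algebra problem over the finite vector space $V$.

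Assembling the algorithm: for (i), decide whether $\tau$ is nilpotent via Kari--Beaur and extract the nilpotency index $N$, then verify $\sigma_s^N = 0$ by the local checks with $P(x) = x^N$; for (ii), proceed analogously with $P(x) = x^n(x^m - 1)$ for the minimal pair $(m, n)$ produced by $\tau$. The main obstacle is (iii) Cayley--Hamilton, since an annihilating polynomial for $\sigma_s$ need not equal $\mu_\tau$, and a priori one would have to search among multiples of $\mu_\tau$ unboundedly. My proposed resolution is to show that $\sigma_s$ is Cayley--Hamilton if and only if $\mu_\tau(\sigma_s)^K = 0$ for some effectively bounded $K$: the operator $\mu_\tau(\sigma_s)$ has image supported only near $F$ by the necessity lemma applied to $\mu_\tau$, and the isolated-perturbation analysis above decomposes iterated application of $\sigma_s$ on such an image into finitely many independent finite-dimensional local pieces, whose joint nilpotency degree yields the required bound on $K$. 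The Cayley--Hamilton test then reduces to verifying $\mu_\tau(\sigma_s)^K = 0$ for this $K$ via the same local-check machinery.
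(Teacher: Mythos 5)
Your overall reduction---necessity of the property for the unperturbed CA $\tau=\sigma_c$ via sparsity, followed by finitely many local checks sorted by perturbation type---is the same skeleton as the paper's proof (Theorem~\ref{t:decidable-sparse-perturbation-lnuca-nilpotent}). However, the step where you assemble the algorithm contains a genuine error affecting all three items: you test $\sigma_s$ against the \emph{same} witness polynomial that works for $\tau$, but the annihilating polynomial of a perturbation can be strictly larger than that of $\tau$. Concretely, take $G=\Z$, $M=\{0\}$, $V=\F_3^2$, $c(g)=\Id_V$ for all $g$ (so $\tau=\Id$ and $\mu_\tau(z)=z-1$), and perturb one cell by $A=\mathrm{diag}(1,2)$. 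Then $\sigma_s^2=\Id$, so $\sigma_s$ is periodic (hence Cayley--Hamilton), yet $\sigma_s-\Id$ acts at the perturbed cell as $A-I=\mathrm{diag}(0,1)$, an idempotent; thus $\mu_\tau(\sigma_s)^K\neq 0$ for every $K\geq 1$, refuting your proposed criterion for (iii), and the check ``$P(\sigma_s)=0$ with $P$ the minimal witness for $\tau$'' in (ii) returns the wrong answer on a periodic $\sigma_s$. The analogous failure occurs for (i): with $\tau=0$ and a one-cell perturbation whose local rule reads a neighbouring cell, one gets $\sigma_s\neq 0$ but $\sigma_s^2=0$, so testing $\sigma_s^N=0$ with $N$ the nilpotency index of $\tau$ is not sound.

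The repair---and the route the paper actually takes---is not to fix a candidate polynomial in advance but to test the \emph{property itself} on the finitely many local pieces. After computing $n_0$ with $\sigma_c^{n_0}=0$ (resp.\ $=\Id$, resp.\ $\sigma_c^{2n_0}=\sigma_c^{n_0}$) and $t$ with $\sigma_t=\sigma_s^{n_0}$, the sparsity hypothesis yields finitely many isomorphism types of endomorphisms $\varphi_i$ of explicitly computable finite vector spaces $W_i$ attached to the clusters, and $\sigma_s$ has the property if and only if every $\varphi_i$ does; nilpotency, periodicity, and eventual periodicity of an endomorphism of a finite space are trivially decidable, and the witness exponents for $\sigma_s$ are then assembled a posteriori from those of the $\varphi_i$ (maxima of nilpotency indices, least common multiples of periods), which may strictly exceed those of $\tau$. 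For (iii) the paper's key ingredient, absent from your proposal, is Theorem~\ref{t:finite-cayley-hamilton-linear-ca-periodic}: over a finite vector space alphabet, (pointwise) Cayley--Hamilton already implies eventually periodic, so (iii) is literally equivalent to the eventual-periodicity case and requires no separate analysis of powers of $\mu_\tau(\sigma_s)$.
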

\par 
We say that a linear NUCA $\tau$ is pointwise nilpotent if for every configuration $x$, we have $\tau^n(x)=0$ for some integer $n \geq 1$. Other pointwise properties such as pointwise Cayley-Hamilton and pointwise (eventual) periodic properties are defined analogously. Theorem~\ref{t:finite-cayley-hamilton-linear-ca-periodic} shows that over a finite vector space alphabet, all pointwise Cayley-Hamilton linear NUCA are eventually periodic. This result clearly fails over infinite vector space alphabet. 
\par 
For general CA, it is well-known that many pointwise properties are uniform.  
We  establish a the following result for linear NUCA over an arbitrary (not necessarily finite) finite-dimensional vector space alphabet: 
\begin{theoremletter}
    \label{t:intro-pointwise-cayley-hamilton-linear-nuca} 
Let $\tau$ be a linear NUCA with finite memory  over a countable group universe and a countable finite dimensional vector space alphabet.   
Suppose that $\tau$ is pointwise Cayley-Hamilton, resp.  pointwise (eventually) periodic, resp. pointwise nilpotent. Then $\tau$ is   Cayley-Hamilton, resp. (eventually) periodic, resp. nilpotent. 
\end{theoremletter}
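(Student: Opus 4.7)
The plan is to combine a Baire category argument---leveraging countability of $G$ and the alphabet $V$---with a directed-union stabilisation inside a finite-dimensional subspace $V^F\subset V^G$, exploiting that $\tau$ is linear with finite memory. Let $k$ denote the base field, which is countable since $V$ is countable and finite-dimensional. Then $V^G$ with the prodiscrete topology is a Polish (hence Baire) topological $k$-vector space, a basis of neighbourhoods of $0$ is given by the closed subspaces $V^{G\setminus F}:=\{x\in V^G : x|_F=0\}$ with $F\subset G$ finite, and $\ker p(\tau)$ is a closed linear subspace of $V^G$ for every $p\in k[X]$.

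I would treat the four conclusions in parallel. In each case the pointwise hypothesis asserts that $V^G$ equals the union of a countable family of such kernels: $N_n=\ker\tau^n$ (nilpotent), $N_n=\ker(\tau^n-1)$ (periodic), $N_{m,n}=\ker(\tau^{m+n}-\tau^n)$ (eventually periodic), or $N_P=\ker P(\tau)$ ranging over nonzero $P\in k[X]$ (Cayley--Hamilton). By the Baire category theorem some member $N_\star$ must have nonempty interior; being a subgroup of the topological group $V^G$, it is then open, and hence contains $V^{G\setminus F}$ for some finite $F\subset G$. Equivalently, the operator indexed by $\star$ already vanishes on every configuration supported off $F$. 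Using the decomposition $V^G=V^F\oplus V^{G\setminus F}$, the remaining task is to produce a single member of the family whose operator vanishes also on the finite-dimensional subspace $W:=V^F$.

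For this I would equip each index set with a natural partial order making the family directed and monotone: inclusion on $n$ for nilpotency, divisibility on $n$ for periodicity, $(m,n)\le(m',n')\iff m\mid m' \text{ and } n\le n'$ for eventual periodicity, and divisibility on $P$ for Cayley--Hamilton. Monotonicity is a direct check: for instance, if $\tau^{m+n}(x)=\tau^n(x)$ then $\tau^{jm+n+k}(x)=\tau^{n+k}(x)$ for all $j\ge1$ and $k\ge0$. Applying the pointwise hypothesis to each $w\in W$ shows that $W$ is a directed union of the subspaces $N_\bullet\cap W$, and since $W$ is finite-dimensional such a directed union must stabilise at its maximum. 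This yields an index $\star^\sharp$ with $W\subseteq N_{\star^\sharp}$. Taking a common upper bound $\star^*$ of $\star$ and $\star^\sharp$ in the directed order (lcm of polynomials, lcm of periods, or componentwise $(\mathrm{lcm},\max)$), monotonicity gives $N_\star\cup N_{\star^\sharp}\subseteq N_{\star^*}$, whence $V^G=V^F+V^{G\setminus F}\subseteq N_{\star^*}$, which is precisely the sought uniform property.

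I expect the main obstacle to be the directed-union step: one has to verify carefully that each family is both monotone and directed under the chosen order, so that the Baire-produced index $\star$ and the dimension-stabilisation index $\star^\sharp$ can be combined into a single global annihilator. A minor additional subtlety in the Cayley--Hamilton case is to guarantee the nontriviality of the resulting relation $R(\tau)=0$, which follows because the lcm of nonzero polynomials in the domain $k[X]$ is nonzero.
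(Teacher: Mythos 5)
Your proposal is correct and follows essentially the same route as the paper: a Baire category argument on the countable family of closed kernels $\ker P(\tau)$ to capture all configurations vanishing off a finite set $F$, followed by a common-multiple (directedness) argument over a basis of the finite-dimensional complement $V^F$, exactly as in the paper's Theorems~\ref{t:pointwise-cayley-hamilton-linear-nuca}--\ref{t:pointwise-uniform-nilpotency-linear-nuca} and their common generalization Theorem~\ref{t:general-cayley-hamilton-sequence}. The only cosmetic differences are that you pass from the Baire-produced cylinder to the subspace $V^{G\setminus F}$ via openness of a subgroup with nonempty interior, and you phrase the finite-dimensional step as stabilization of a directed union rather than taking a product of the annihilating polynomials.
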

\par 
When the alphabet is finite, we have the following similar result for the class of all NUCA with finite memory over a countable group universe.   
\begin{theoremletter}
\label{t:intro-pointwise-uniform-nuca} 
Let $M \subset G$ be a finite subset of a countable group $G$ and let $A$ be a finite alphabet. Let $S=A^{A^M}$ and let $s\in S^G$.  Suppose that $\sigma_s$ is pointwise nilpotent, resp.  pointwise periodic., resp. pointwise eventually periodic.  Then $\sigma_s$ is  nilpotent, resp.  periodic., resp.  eventually periodic. 
\end{theoremletter}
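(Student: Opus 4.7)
My plan is a single compactness argument handling all three cases uniformly. The key ingredients are that $A^G$ is compact in the prodiscrete topology (Tychonoff, since $A$ is finite) and that every iterate $\sigma_s^k$ is continuous (since $M$ is finite), so that subsets of $A^G$ cut out by equalities of iterates of $\sigma_s$ are closed. I will reformulate each pointwise hypothesis as a countable cover of $A^G$ by such closed sets, extract a finite subcover by compactness, and consolidate the finitely many indices into a single uniform choice via a simple monotonicity observation on the cover.

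For the pointwise periodic case I would set $F_n := \{x \in A^G : \sigma_s^n(x) = x\}$, which is closed as the preimage of the closed diagonal of $A^G \times A^G$ under the continuous map $x \mapsto (\sigma_s^n(x), x)$. The hypothesis says $A^G = \bigcup_{n \geq 1} F_n$, so compactness yields indices $n_1, \ldots, n_k$ with $A^G = F_{n_1} \cup \cdots \cup F_{n_k}$. Since $\sigma_s^n(x) = x$ implies $\sigma_s^{mn}(x) = x$ for every $m \geq 1$, one has $F_n \subseteq F_{mn}$, and hence setting $N := \operatorname{lcm}(n_1, \ldots, n_k)$ gives $F_N = A^G$, i.e., $\sigma_s^N = \Id_{A^G}$, so $\sigma_s$ is periodic.

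The remaining two cases proceed identically. For pointwise eventual periodicity I would take $E_{n, p} := \{x \in A^G : \sigma_s^{n+p}(x) = \sigma_s^n(x)\}$ for $n \geq 0$, $p \geq 1$: after a finite subcover with indices $(n_i, p_i)$, I set $N := \max_i n_i$ and $P := \operatorname{lcm}(p_1, \ldots, p_k)$; iterating the defining identity on each $E_{n_i, p_i}$ and using $p_i \mid P$, $N \geq n_i$ shows $E_{n_i, p_i} \subseteq E_{N, P}$, hence $\sigma_s^{N+P} = \sigma_s^N$. For pointwise nilpotency, with $z$ the common fixed configuration furnished by the hypothesis, I would set $N_n := \{x \in A^G : \sigma_s^n(x) = z\}$; the identity $\sigma_s(z) = z$ forces $N_n \subseteq N_{n+1}$, and a finite subcover collapses to a single $N$ with $\sigma_s^N \equiv z$. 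The only mildly delicate point is verifying the monotonicity of the three cover families under their respective parameters, which is exactly what allows the finitely many local exponents from a subcover to fuse into one uniform pair; beyond that, the argument is entirely routine compactness, with the countability of $G$ playing no essential role (compactness of $A^G$ is all that matters).
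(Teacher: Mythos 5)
There is a genuine gap at the central step: compactness of $A^G$ does \emph{not} let you extract a finite subcover from a cover by \emph{closed} sets. The finite-subcover property applies to open covers only; a compact space can perfectly well be a countable union of closed sets with no finite subfamily covering it (e.g.\ the convergent sequence $\{0\}\cup\{1/n : n\geq 1\}$ is a countable union of singletons). Your sets $F_n$, $E_{n,p}$, $N_n$ are closed but not open (already for a single CA, the fixed-point set of the shift is a finite, non-open subset of the full shift), so the claimed finite subcover simply is not available, and the whole consolidation-by-lcm step has nothing to consolidate. The correct tool here is the Baire category theorem, which is what the paper uses: from $A^G=\bigcup_k X_k$ with $X_k$ closed one concludes only that \emph{one} $X_{n_0}$ has nonempty interior, i.e.\ contains a cylinder $W=\{p\}\times A^{G\setminus E}$ for some finite $E\subset G$.

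That weaker conclusion is also why the paper's proofs (Theorems~\ref{t:pointwise-uniform-nilpotency-nuca}, \ref{t:pointwise-periodic-nuca}, \ref{t:pointwise-uniform-eventually-periodic-nuca}) do not stop there: a second, NUCA-specific stage is required. Writing $\sigma_t=\sigma_s^{n_0}$ with memory $N$, the fact that $\sigma_t$ acts trivially (resp.\ constantly) on the whole cylinder $W$ forces the local rules $t(g)$ for $g\notin EN$ to be the projection $z\mapsto z(1_G)$ (resp.\ the constant map), because $(g^{-1}W)\vert_N=A^N$ for such $g$. Hence $\sigma_t$ already behaves uniformly outside the finite window $EN$, and one then applies the pointwise hypothesis only to the finitely many patterns in $A^{EN}$ (or $A^{EN^2}$, $A^{EN^3}$ in the periodic and eventually periodic cases) — finiteness of $A$ is used exactly here — and takes a product or maximum of the resulting exponents. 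Your proposal is missing both the correct topological input and this entire second stage, so it does not prove the theorem.
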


\par 
The paper is organized as follows. In Section~\ref{s:induced-local-map}, we recall briefly the useful construction of various induced local maps of NUCA. In Section~\ref{s:3}, we study nilpotent and pointwise nilpotent NUCA and show that the two properties are stable and in fact equivalent (Lemma~\ref{l:stable-nilpotency-nuca}, Lemma~\ref{l:stable-periodic-nuca}, Theorem~\ref{t:pointwise-uniform-nilpotency-nuca}). Similarly, we establish the same equivalence for (eventual) periodicity and pointwise (eventual) periodicity (Theorem~\ref{t:pointwise-periodic-nuca}, Theorem~\ref{t:pointwise-uniform-eventually-periodic-nuca}) in Section~\ref{s:4} and  Section~\ref{s:5}. Then in Section~\ref{s:6}, we show that over finite vector space alphabet, every pointwise Cayley-Hamilton linear NUCA must be eventually periodic (Theorem~\ref{t:finite-cayley-hamilton-linear-ca-periodic}). For arbitrary finite-dimensional vector space alphabe, we then establish the equivalence between pointwise nilpotency, pointwise (eventual) periodicity, pointwise Cayley-Hamilton, and their respective uniform counterparts (Theorem~\ref{t:pointwise-cayley-hamilton-linear-nuca}, Theorem~\ref{t:pointwise-uniform-eventually-periodic-linear-nuca}, Theorem~\ref{t:pointwise-uniform-periodic-linear-nuca}, Theorem~\ref{t:pointwise-uniform-nilpotency-linear-nuca}). These results are generalized in  
Theorem~\ref{t:general-cayley-hamilton-sequence}. For the class $\mathrm{LNUCA}_c$, we show the nilpotency and the (eventual) periodicity are decidable properties (Theorem~\ref{t:decidable-lnuca-c-periodic}, Theorem~\ref{t:decidable-group-nuca-c-nilpotent}). 
The proof of Theorem~\ref{t:intro-decidable-sparse-perturbation-lnuca} is given in Section~\ref{s:8}.  
Finally, we prove that injectivity is also a decidable property for elements in the class $\mathrm{LNUCA}_c$ over finite vector space alphabet is obtained in Theorem~\ref{t:main-injectivity} by generalizing the effective constructions in \cite{kari-beaur} to the case of monoid universe  to notably cover the projections to the half spaces.

\section{Induced local maps of NUCA} 
\label{s:induced-local-map}

Let $G$ be a group and let $A$ be a set. For every subset $E\subset G$, $g \in G$,  and $x \in A^E$ we define $gx \in A^{gE}$ by setting $gx(gh)=x(h)$ for all $h \in E$. In particular,  $gA^E= \{gx \colon x \in A^E\}=A^{gE}$. 
\par 
Let $M$  be a subset of a group $G$. Let $A$ be a set and let $S=A^{A^M}$. For every finite subset $E \subset G$  and $w \in S^{E}$,  
we define a map  $f_{E,w}^+ \colon A^{E M} \to A^{E}$ for every $x \in A^{EM}$ and $g \in E$ by (in the case of CA, see e.g. \cite[Lemma~3.2]{cscp-alg-goe}, \cite[Proposition~3.5]{phung-2020}, \cite[Secion~2.2]{phung-embedding}):  
\begin{align}
\label{e:induced-local-maps} 
    f_{E,w}^+(x)(g) & = w(g)((g^{-1}x)\vert_M). 
\end{align}
\par 
In the above formula, note that  $g^{-1}x \in A^{g^{-1}EM}$ and $M \subset g^{-1}EM$ since $1_G \in g^{-1}E$ for $g \in E$. Therefore, the map   $f_{E,w}^+ \colon A^{E M} \to A^{E}$ is well defined. 
\par
Consequently, for every $s \in S^G$, we have a well-defined induced local map $f_{E, s\vert_E}^+ \colon A^{E M} \to A^{E}$ for every finite subset $E \subset G$ which satisfies: 
\begin{equation}
\label{e:induced-local-maps-general} 
    \sigma_s(x)(g) =  f_{E, s\vert_E}^+(x\vert_{EM})(g)
\end{equation}
for every $x \in A^G$ and $g \in E$. Equivalently, we have for all $x \in A^G$ that: 
\begin{equation}
\label{e:induced-local-maps-proof} 
    \sigma_s(x)\vert_E =  f_{E, s\vert_E}^+(x\vert_{EM}). 
\end{equation}
\par 
It follows that for $\Gamma = \im \sigma_s$, we have the relation: 
\begin{equation} 
\label{e:local-image}
    \Gamma_E = \im f_{E, s\vert_E}^+. 
\end{equation} 

\section{Nilpotent and pointwise nilpotent NUCA} 
\label{s:3}

We establish the following useful  result which extends \cite[Lemma~4.1]{phung-tcs}. 

\begin{lemma}
    \label{l:power-almost-g-invariant} 
    Let $M \subset G$ be a  subset of a group $G$ and let $A$ be an alphabet (not necessarily finite). Let $S= A^{A^M}$ and let $s \in S^G$. Then for every site $g \in G$,  every configuration $x \in A^G$, and every integer $n \geq 0$, we have: 
    \begin{equation}
    \label{e:l:power-almost-g-invariant}
        \sigma_{gs}^n(gx) = g\sigma_s^n(x). 
    \end{equation}
\end{lemma}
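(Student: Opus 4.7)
The plan is to reduce the statement to the case $n=1$ by an immediate induction on $n$, and then verify the $n=1$ identity by a direct pointwise calculation, unpacking the definitions of $\sigma_s$ and of the Bernoulli shift action.

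First I would dispatch the base case $n=0$ trivially (both sides equal $gx$) and then observe that, assuming \eqref{e:l:power-almost-g-invariant} for some $n$, one has
\begin{equation*}
\sigma_{gs}^{n+1}(gx) = \sigma_{gs}\bigl(\sigma_{gs}^{n}(gx)\bigr) = \sigma_{gs}\bigl(g\,\sigma_s^n(x)\bigr),
\end{equation*}
so the inductive step follows by applying the $n=1$ case to the configuration $\sigma_s^n(x) \in A^G$ in place of $x$. Thus everything reduces to proving $\sigma_{gs}(gx) = g\,\sigma_s(x)$.

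For the $n=1$ case I would evaluate both sides at an arbitrary site $h \in G$. By Definition~\ref{d:most-general-def-asyn-ca},
\begin{equation*}
\sigma_{gs}(gx)(h) = (gs)(h)\bigl((h^{-1}(gx))\vert_M\bigr),
\end{equation*}
and the Bernoulli shift gives $(gs)(h) = s(g^{-1}h)$. The key elementary identity to record is that for every $m \in M$,
\begin{equation*}
(h^{-1}(gx))(m) = (gx)(hm) = x(g^{-1}hm) = \bigl((g^{-1}h)^{-1}x\bigr)(m),
\end{equation*}
so $(h^{-1}(gx))\vert_M = ((g^{-1}h)^{-1}x)\vert_M$. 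Substituting this yields
\begin{equation*}
\sigma_{gs}(gx)(h) = s(g^{-1}h)\bigl(((g^{-1}h)^{-1}x)\vert_M\bigr) = \sigma_s(x)(g^{-1}h) = \bigl(g\,\sigma_s(x)\bigr)(h),
\end{equation*}
which is the required equality of configurations.

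There is no real obstacle here; the only thing to be careful about is the placement of $g^{-1}$'s coming from the left action convention $(gy)(h) = y(g^{-1}h)$. The proof works verbatim for arbitrary (not necessarily finite) alphabets $A$ and arbitrary memory sets $M \subset G$, since no finiteness or continuity is used beyond the purely set-theoretic definitions.
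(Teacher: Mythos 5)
Your proof is correct and follows essentially the same route as the paper: an induction on $n$ reducing everything to the case $n=1$. The only difference is that you verify the $n=1$ identity $\sigma_{gs}(gx)=g\sigma_s(x)$ by a direct pointwise computation (which is accurate), whereas the paper simply cites it from an earlier reference.
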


\begin{proof}
We will prove by induction that $\sigma_{gs}^n(gx) = g \sigma_s^n(x)$ for every integer $n \geq 1$ and all $g \in G$,  $x \in A^G$. The case $n=0$ is trivial and the case $n=1$ results directly from \cite[Lemma~4.1]{phung-tcs}. Suppose that the formula \eqref{e:l:power-almost-g-invariant} holds for some integer $k \geq 1$. Then we have $\sigma_{gs}^k(gy)= g\sigma_s^k(y)$ for all $y \in A^G$ and $g \in G$. In particular, for $y= \sigma_s(x)$, we obtain $\sigma_{gs}^k(g\sigma_s(x))= g\sigma_s^k(\sigma_s(x))$. Combining with the equality $\sigma_{gs}(gx) = g \sigma_s(x)$, we can thus compute: 
    \begin{align*}
      \sigma_{gs}^{k+1}(gx) & = \sigma_{gs}^k (\sigma_{gs}(gx)) \\
      & = \sigma_{gs}^k (g\sigma_{s}(x)) \\
      & = g \sigma_s^k (\sigma_s(x))\\
      & = g \sigma_s^{k+1}(x).
    \end{align*}
\par 
The proof is thus complete. 
\end{proof}

\begin{definition}
\label{d:pointwise-nil-nuca} 
Let $G$ be a group and let $A$ be an alphabet. Let $\sigma_s \colon A^G \to A^G$ be an NUCA. We say that $\sigma_s$ is \emph{pointwise nilpotent} if there exist a constant configuration $x_0 \in A^G$ such that for every $x\in A^G$, there exists an integer $n \geq 1$ depending on $x$ such that $\sigma_s^n(x)=x_0$. We say that $\sigma_s$ is \emph{nilpotent} if there exist a constant configuration $x_0 \in A^G$ and an integer $n \geq 1$ such that $\sigma_s^n(x)=x_0$ for every $x \in A^G$.
\end{definition}
\par 
If $\sigma_s$ is a linear NUCA, then it is clear that the constant configuration $x_0$ in the above definition must be $0^G$. We remark the following elementary property. 
\begin{lemma}
    \label{l:nilpotency-tau-tau-k}
    Let $k \geq 1$ be an integer. Then an NUCA $\tau$ is pointwise nilpotent, resp. nilpotent, if and only if so is $\tau^k$.
\end{lemma}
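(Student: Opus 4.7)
The statement is an elementary consequence of unwinding the definitions, with the minor wrinkle that the common constant target $x_0$ must be fixed by $\tau$. I would first handle the uniform nilpotent case, deducing along the way that $x_0$ is necessarily a fixed point; the pointwise case then follows by essentially the same iteration argument.

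For the uniform case, suppose $\tau^n(x)=x_0$ for every $x\in A^G$. Applying $\tau$ once more, for any $y\in A^G$ I obtain $\tau(x_0)=\tau(\tau^n(y))=\tau^{n+1}(y)=\tau^n(\tau(y))=x_0$, so $x_0$ is a fixed point of $\tau$ and hence of every $\tau^j$. Therefore $(\tau^k)^n(x)=\tau^{kn}(x)=\tau^{(k-1)n}(\tau^n(x))=\tau^{(k-1)n}(x_0)=x_0$ for all $x$, proving $\tau^k$ is nilpotent with the same exponent $n$. Conversely, if $(\tau^k)^m\equiv x_0$ uniformly, then $\tau^{km}(x)=(\tau^k)^m(x)=x_0$ for every $x$, so $\tau$ is nilpotent with exponent $km$.

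For the pointwise case, the direction ``$\tau^k$ pointwise nilpotent $\Rightarrow$ $\tau$ pointwise nilpotent'' is immediate: given $x$, choose $m_x\geq 1$ with $(\tau^k)^{m_x}(x)=x_0$ and take $n_x=k m_x$. The forward direction uses that $x_0$ is fixed by $\tau$, which is automatic in the linear setting (where $x_0=0^G$) and is consistent with the standard convention that the target of a nilpotent NUCA is a quiescent configuration. Granted this, for each $x$ I pick $n_x\geq 1$ with $\tau^{n_x}(x)=x_0$; because $x_0$ is fixed, $\tau^{n}(x)=x_0$ for every $n\geq n_x$, so any $m_x\geq 1$ with $k m_x\geq n_x$ yields $(\tau^k)^{m_x}(x)=\tau^{k m_x}(x)=x_0$.

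The only step requiring genuine care is the forward pointwise implication, which relies on $x_0$ being a true fixed point of $\tau$ rather than merely periodic; once this is in hand (trivially in the linear setting, and more generally by the quiescent-target convention) the remaining work is purely mechanical iteration.
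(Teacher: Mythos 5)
Your proof is correct and follows essentially the same route as the paper: unwind the definitions, observe that $x_0$ returns to itself under iteration of $\tau$, and convert exponents between $\tau$ and $\tau^k$. The one place where you are actually more careful than the paper is the forward pointwise implication: the paper's proof only records that $\sigma_s^{n_0}(x_0)=x_0$ for some $n_0\geq 1$ (mere periodicity of $x_0$) and then declares the claim clear, whereas you correctly isolate the fact that this step needs $\tau(x_0)=x_0$ — with a merely periodic target the set of times at which a given $x$ hits $x_0$ is only an arithmetic progression modulo the period of $x_0$, and such a progression need not meet $k\N$. Your observation that $x_0$ is automatically fixed in the uniform case (via $\tau(x_0)=\tau^{n+1}(y)=\tau^n(\tau(y))=x_0$), and your explicit appeal to the quiescent-target reading of the definition (automatic for linear NUCA, where $x_0=0^G$) in the pointwise case, address precisely the point the paper elides; the rest of your argument matches the paper's.
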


\begin{proof}
Let $\sigma_s$ be an NUCA. It is trivial that if $\sigma_s^k$ is pointwise nilpotent, resp. nilpotent, then so is $\sigma_s$. Conversely, suppose that $\sigma_s$ is pointwise nilpotent, resp. nilpotent, and let $x_0$ be the constant configuration as in Definition~\ref{d:pointwise-nil-nuca}. Then by definition, we deduce that  $\sigma_s^{n}(x_0)=x_0$ for a suitable $n_0 \geq 1$. It is then clear that $\sigma_s^k$ is also pointwise nilpotent, resp. nilpotent, with the same $x_0$ as the constant configuration satisfying Definition~\ref{d:pointwise-nil-nuca}. 
\end{proof}

\par 
 The following result implies that nilpotency is a stable property. 
\begin{lemma}
    \label{l:stable-nilpotency-nuca} 
Let $M \subset G$ be a finite subset of a countable group $G$ and let $A$ be an alphabet (not necessarily finite). Let $S= A^{A^M}$ be the set of all maps $A^M \to A$. Let $s \in S^G$ and suppose that $\sigma_s$ is nilpotent, i.e., there exist 
a constant configuration $x_0 \in A^G$ and an integer $n \geq 1$ such that $\sigma_s^n(x)=x_0$ for all $x \in A^G$. Then for every $ t \in \Sigma(s)$, we have $\sigma_t^n(x)=x_0$ for all $x \in A^G$.
%In particular, $\sigma_t$ is nilpotent for all $t \in \Sigma(s)$. 
\end{lemma}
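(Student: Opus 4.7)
The plan is to prove the statement in two stages: first handle configurations of local rules in the orbit $\{gs : g \in G\}$ using Lemma~\ref{l:power-almost-g-invariant}, and then extend to the closure $\Sigma(s)$ by exploiting the fact that $\sigma_t^n(x)(g)$ depends on $t$ only through finitely many coordinates near $g$.

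For the orbit step, fix any $g \in G$ and any $y \in A^G$. Writing $y = g \cdot (g^{-1}y)$ and applying Lemma~\ref{l:power-almost-g-invariant} with $x = g^{-1}y$, I obtain
\[
\sigma_{gs}^n(y) \;=\; \sigma_{gs}^n\bigl(g(g^{-1}y)\bigr) \;=\; g\,\sigma_s^n(g^{-1}y) \;=\; g x_0 \;=\; x_0,
\]
where the last equality holds because $x_0$ is a constant configuration and the Bernoulli shift fixes constant configurations. Hence $\sigma_{gs}^n = x_0$ identically on $A^G$ for every $g \in G$.

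For the closure step, fix $t \in \Sigma(s)$ and $x \in A^G$. I will show $\sigma_t^n(x)(g) = x_0(g)$ for every $g \in G$. The key observation, which I would record as a preliminary remark, is that by iterating \eqref{e:induced-local-maps} the value $\sigma_s^n(x)(g)$ depends on $s$ only through its restriction to a finite set $g F_n$, where $F_n \subset G$ is a finite set built from unions of powers of $M$ and is independent of $s$ and $x$. (Explicitly, one may take $F_n = \{1_G\} \cup M \cup M^2 \cup \dots \cup M^{n-1}$ after interpreting products in $G$.) Since $t \in \Sigma(s) = \overline{\{hs : h \in G\}}$ in the prodiscrete topology, there exists $h \in G$ with $(hs)\vert_{gF_n} = t\vert_{gF_n}$. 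The local-dependence observation then gives
\[
\sigma_t^n(x)(g) \;=\; \sigma_{hs}^n(x)(g),
\]
and by the orbit step applied to $hs$ with the configuration $y = x$ we conclude that the right-hand side equals $x_0(g)$. As $g$ was arbitrary, $\sigma_t^n(x) = x_0$, and as $x$ was arbitrary, $\sigma_t^n \equiv x_0$ on $A^G$, completing the proof.

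The only genuine obstacle is making the finite-dependence claim precise: one must verify by induction on $n$ that $\sigma_s^n(x)(g)$ is determined by $s\vert_{gF_n}$ and $x\vert_{gM^n}$ with $F_n$ independent of $s$, which is a routine unwinding of \eqref{e:induced-local-maps-general}. Everything else is immediate from Lemma~\ref{l:power-almost-g-invariant} and the definition of the prodiscrete topology on $S^G$.
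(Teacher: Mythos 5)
Your proposal is correct and follows essentially the same route as the paper: first show $\sigma_{gs}^n \equiv x_0$ on the orbit via Lemma~\ref{l:power-almost-g-invariant} and the shift-invariance of the constant configuration $x_0$, then pass to the closure $\Sigma(s)$ by observing that $\sigma_t^n(x)(g)$ depends on $t$ only through its restriction to a fixed finite translate of a set built from powers of $M$. The paper packages that finite-dependence step by invoking \cite[Theorem~6.2]{phung-tcs} (the $n$-th iterate is itself an NUCA with memory $M^n$ whose induced local rule at $h$ is determined by $t\vert_{hM^n}$), whereas you prove it by direct induction on \eqref{e:induced-local-maps-general}; this is a presentational difference only.
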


\begin{proof}
For every $g \in G$ and every $x \in A^G$, we infer from Lemma~\ref{l:power-almost-g-invariant} that:
\begin{equation}
\label{l:stable-nilpotency-nuca-1-2}
\sigma_{gs}^n(x) = g \sigma_s^n(g^{-1}x)=gx_0=x_0.  
\end{equation}
\par 
The last equality results from the hypothesis that $x_0$ is a constant configuration. 
Hence, $\sigma_{gs}^n(x)=x_0$ for all $g \in G$ and $x \in A^G$.    
Let $t \in \Sigma(s)$ and let $h \in G$. By \cite[Theorem~6.2]{phung-tcs}, we know that $N=M^n$ is a finite memory set of $\sigma_t^n$ and $\sigma_{gs}^n$ for all $g \in G$. 
Since $t \in \Sigma(s)$, we can find $g \in G$ such that  $t\vert_{hN}=(gs)\vert_{hN}$. 
We can write  $\sigma_t^n= \sigma_v$ and $\sigma_{gs}^n=\sigma_u$ for some  $u, v \in U^G$ where $U=A^{A^N}$. 
It follows from the equality $t\vert_{hN}=(gs)\vert_{hN}$ that $u(h)= v(h)$. As $\sigma_v=\sigma_{gs}^n$ is the constant map with value the constant configuration $x_0$ by \eqref{l:stable-nilpotency-nuca-1-2}, we deduce that $u(h) \colon {A^N} \to A$ is the constant map with value $x_0(1_G)$. Therefore, $v(h) \colon {A^N} \to A$ is also constant with value $x_0(1_G)$ for every $h \in G$. We can thus conclude that 
$\sigma_t^n(x)=\sigma_v(x)=x_0$ for all $x \in A^G$ and the proof is complete. 
\end{proof}

\par 
Similarly, we have the following result: 
\begin{lemma}
    \label{l:stable-periodic-nuca} 
Let $M $ be a finite subset of a  group $G$ and let $A$ be an alphabet (not necessarily finite). Let $S= A^{A^M}$ be the set of all maps $A^M \to A$. Let $s \in S^G$ and suppose that  there exists  
 an integers $m,n \geq 0$ such that $\sigma_s^m = \sigma_s^n$. Then for every $ t \in \Sigma(s)$, we have $\sigma_t^m=\sigma_t^n$.  
\end{lemma}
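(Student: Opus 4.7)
The plan is to mimic the proof of Lemma~\ref{l:stable-nilpotency-nuca}, replacing the role of the ``output equals $x_0$'' condition by the ``two power maps coincide'' condition.

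First, I would exploit Lemma~\ref{l:power-almost-g-invariant} to globalize the hypothesis over the orbit of $s$. For every $g \in G$ and every $x \in A^G$, setting $y = g^{-1}x$ in \eqref{e:l:power-almost-g-invariant} yields $\sigma_{gs}^k(x) = g\sigma_s^k(g^{-1}x)$ for all $k \geq 0$. Applying this with $k = m$ and $k = n$ and using the hypothesis $\sigma_s^m = \sigma_s^n$, I obtain
\begin{equation*}
    \sigma_{gs}^m(x) = g\sigma_s^m(g^{-1}x) = g\sigma_s^n(g^{-1}x) = \sigma_{gs}^n(x)
\end{equation*}
for every $g \in G$ and every $x \in A^G$, so $\sigma_{gs}^m = \sigma_{gs}^n$ for all $g \in G$.

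Next, I would pass from the orbit to its closure via a locality argument. Since $M$ is finite, I can fix a finite subset $N \subset G$ that is a common memory for $\sigma_s^m$ and $\sigma_s^n$ (for instance $N = M^m \cup M^n$, invoking \cite[Theorem~6.2]{phung-tcs}). Then $N$ is simultaneously a memory set for all of $\sigma_t^m$, $\sigma_t^n$, $\sigma_{gs}^m$, $\sigma_{gs}^n$ for every $g \in G$, so I may write $\sigma_t^m = \sigma_{v_m}$, $\sigma_t^n = \sigma_{v_n}$, $\sigma_{gs}^m = \sigma_{u_{m,g}}$, $\sigma_{gs}^n = \sigma_{u_{n,g}}$ with $v_m, v_n, u_{m,g}, u_{n,g} \in (A^{A^N})^G$. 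Now fix an arbitrary site $h \in G$. Because $t \in \Sigma(s)$ and $hN$ is finite, I can choose $g \in G$ with $t|_{hN} = (gs)|_{hN}$. Since the local defining map at $h$ of any NUCA with memory $N$ depends only on the restriction of its configuration of local defining maps to $hN$, this gives $v_m(h) = u_{m,g}(h)$ and $v_n(h) = u_{n,g}(h)$.

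Finally, I would combine these identifications with the orbit equality from the first step. Evaluating the equality $\sigma_{gs}^m = \sigma_{gs}^n$ at the site $h$ on all $x \in A^G$ forces $u_{m,g}(h) = u_{n,g}(h)$ as maps $A^N \to A$, hence $v_m(h) = v_n(h)$. As $h$ was arbitrary, $v_m = v_n$ and therefore $\sigma_t^m = \sigma_{v_m} = \sigma_{v_n} = \sigma_t^n$, completing the proof. The only mildly subtle step is the locality reduction in the second paragraph — one must carefully separate the roles of the global configuration $x$ and the configuration $s$ of local rules, and recognize that the site-$h$ local rule of a power of an NUCA only sees $s$ through its restriction to a finite window around $h$; the rest is mechanical.
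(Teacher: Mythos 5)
Your proposal is correct and follows essentially the same route as the paper: first globalize the hypothesis to $\sigma_{gs}^m=\sigma_{gs}^n$ for all $g\in G$ via Lemma~\ref{l:power-almost-g-invariant}, then fix a common finite memory $N$ for the powers (the paper takes $N=M^{\max(m,n)}$ where you take $M^m\cup M^n$, an immaterial difference) and compare local defining maps at each site $h$ using a $g$ with $t\vert_{hN}=(gs)\vert_{hN}$. The locality point you flag as subtle is exactly the step the paper also relies on, via \cite[Theorem~6.2]{phung-tcs}.
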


\begin{proof}
Since $\sigma_s^m=\sigma_s^n$, we deduce from  Lemma~\ref{l:power-almost-g-invariant} that $\sigma_{gs}^m=\sigma_{gs}^n$ for all $g \in G$ since for every configuration $x \in A^G$, we have: 
\begin{equation*}
\sigma_{gs}^m(x) = g \sigma_s^m(g^{-1}x)=g \sigma_s^n(g^{-1}x)= \sigma_{gs}^n(x).  
\end{equation*}
\par 
Let $t \in \Sigma(s)$ and let $h \in G$. Then for  $N=M^{\max(m,n)}$, we can find as in the proof of Lemma~\ref{l:stable-nilpotency-nuca} some $g \in G$ and $u, v, w \in U^G$, where $U=A^{A^N}$, such that  $t\vert_{hN}=(gs)\vert_{hN}$ and 
\[
\sigma_{gs}^m=\sigma_{gs}^n=\sigma_u, \quad   
\sigma_t^m=\sigma_v, \quad \text{and} \quad  \sigma_t^n=\sigma_w.
\] 
\par 
Consequently, $u(h)= v(h)=w(h)$ for every $h \in G$ and we can thus conclude that 
$\sigma_t^m=\sigma_t^n$. The proof is complete. 
\end{proof}

\par 
The following theorem states that every pointwise nilpotent NUCA over an arbitrary finite alphabet must be nilpotent.  

\begin{theorem}
\label{t:pointwise-uniform-nilpotency-nuca} 
Let $M \subset G$ be a finite subset of a countable group $G$ and let $A$ be a finite alphabet. Let $S=A^{A^M}$ and let $s\in S^G$.   Suppose that the NUCA $\sigma_s$ is pointwise nilpotent. Then $\sigma_s$ is nilpotent. Moreover, if $\sigma_s^n$ is constant whose image is  a constant configuration then so is $\sigma_t^n$ for every $t \in \Sigma(s)$. 
\end{theorem}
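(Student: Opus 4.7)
The proof divides into two parts. The ``moreover'' clause is immediate from Lemma~\ref{l:stable-nilpotency-nuca}: once we have shown that $\sigma_s^n$ is the constant map with image $x_0$, that lemma yields $\sigma_t^n(x)=x_0$ for every $t\in\Sigma(s)$ and every $x\in A^G$. Hence the real work is to promote the pointwise hypothesis to a uniform one. Writing $x_0=a_0^G$, the plan is a Baire-category argument blended with the finite-memory structure of $\sigma_s$.

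Applying pointwise nilpotency to $x_0$ itself gives $\sigma_s^{n_0}(x_0)=x_0$ for some $n_0\geq 1$, so $x_0$ is periodic under $\sigma_s$. A first delicate point is to verify that $x_0$ is actually a fixed point, i.e.\ $\sigma_s(x_0)=x_0$: this is certainly necessary, since any uniform witness $\sigma_s^n\equiv x_0$ forces it on evaluation at $y=\sigma_s(x_0)$, and it is to be extracted from the pointwise hypothesis together with the finiteness of $A$. With $\sigma_s(x_0)=x_0$ in hand, the closed sets $K_n:=\sigma_s^{-n}(\{x_0\})\subset A^G$ form a nested chain $K_1\subset K_2\subset\cdots$ whose union equals $A^G$ by pointwise nilpotency. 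Since $A$ is finite and $G$ countable, $A^G$ is compact metrisable and thus a Baire space, so some $K_N$ must have non-empty interior; there therefore exist a finite subset $F\subset G$ and a pattern $y\in A^F$ with $C_F(y):=\{x\in A^G:x|_F=y\}\subset K_N$.

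The decisive step uses the iterated induced local maps of Section~\ref{s:induced-local-map}: the value $\sigma_s^N(x)(g)$ depends only on $x|_{gM^N}$. Given any site $g$ with $gM^N\cap F=\varnothing$ and any $x\in A^G$, one may modify $x$ on $F$ to match $y$ there without altering $x|_{gM^N}$; the modified configuration lies in $C_F(y)\subset K_N$, forcing $\sigma_s^N(x)(g)=a_0$. Consequently $\sigma_s^N(x)(g)=a_0$ uniformly in $x\in A^G$ for every $g$ outside the \emph{finite} exceptional set $E:=\{g\in G:gM^N\cap F\neq\varnothing\}$, which confines $\sigma_s^N(A^G)$ to the finite set $T:=\{z\in A^G:z|_{G\setminus E}\equiv a_0\}$. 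Applying pointwise nilpotency to each of the finitely many $z\in T$ produces integers $n(z)$ with $\sigma_s^{n(z)}(z)=x_0$; setting $N':=\max_{z\in T}n(z)$ and using $\sigma_s(x_0)=x_0$ to lift every hitting time up to $N'$, we obtain $\sigma_s^{N'}(z)=x_0$ for each $z\in T$. Composing, $\sigma_s^{N+N'}(x)=\sigma_s^{N'}(\sigma_s^N(x))=x_0$ for every $x\in A^G$, which is precisely uniform nilpotency. I expect the main obstacle to be the local-to-global passage in the Baire step — transferring ``a cylinder over $F$ lies in $K_N$'' into uniform control of $\sigma_s^N(x)(g)$ at all remote $g$ — together with the preliminary verification that $\sigma_s(x_0)=x_0$, since the decoupling of sites crucially exploits the finiteness of the memory $M$.
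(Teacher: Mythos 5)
Your outline follows the same route as the paper's proof: Baire category applied to the closed sets $\sigma_s^{-n}(\{x_0\})$, a finite-memory locality argument showing that some power $\sigma_s^N$ sends every configuration into a finite set $T$ of configurations supported on a finite window $E$, and a final maximum over the hitting times of the finitely many elements of $T$. Those three steps are carried out correctly and coincide with the paper's (the paper phrases the locality step by showing that the local rules $t(g)$ of the power $\sigma_t=\sigma_s^{rn_0}$ are constant for $g\notin EN$, which is the same computation as your modification-on-$F$ argument), and the ``moreover'' clause is indeed Lemma~\ref{l:stable-nilpotency-nuca}.

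The gap is the assertion $\sigma_s(x_0)=x_0$, which you flag as ``delicate'' and ``to be extracted'' but never prove, and which your argument uses essentially: without it you cannot lift the individual hitting times $n(z)$, $z\in T$, to a common exponent $N'$, since from $\sigma_s^{n(z)}(z)=x_0$ one only gets $\sigma_s^{n(z)+kp}(z)=x_0$, where $p$ is the minimal period of $x_0$. The pointwise hypothesis applied to $x_0$ yields only $\sigma_s^{r}(x_0)=x_0$ for some $r\ge 1$; a priori $x_0$ could sit on a nontrivial cycle, and no soft argument (finiteness of $A$, evaluating at $\sigma_s(x_0)$, etc.) excludes this --- in fact $\sigma_s(x_0)=x_0$ is equivalent to all minimal hitting times being congruent modulo $p$, which is essentially a fragment of the conclusion, so assuming it risks circularity. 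The paper is organized precisely to sidestep this point: it fixes $r$ with $\sigma_s^r(x_0)=x_0$, indexes the Baire decomposition by the powers $\sigma_s^{kr}$, and works throughout with $\sigma_t=\sigma_s^{rn_0}$, for which $x_0$ \emph{is} a genuine fixed point, so that hitting times for $\sigma_t$ can be lifted freely; nilpotency of $\sigma_s$ is then recovered from that of the power via Lemma~\ref{l:nilpotency-tau-tau-k}. To repair your write-up, either supply an actual proof that $x_0$ is fixed, or rerun the argument for the power $\sigma_s^r$ as the paper does (in which case you must also justify that this power is still pointwise nilpotent, i.e., that every configuration admits a hitting time divisible by $r$, which is exactly what Lemma~\ref{l:nilpotency-tau-tau-k} is invoked for).
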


\begin{proof}
Up to enlarging $M$, we can suppose without loss of generality that $1_G \in M$ and $M=M^{-1}$. 
Since $\sigma_s$ is pointwise nilpotent, there exists a constant configuration $x_0 \in A^G$ such that for every $x \in A^G$, we can find some integer $n \geq 1$ such that $\sigma_s^n(x)=x_0$. Let $r \geq 1$ be an integer such that 
$\sigma_s^r(x_0)=x_0$.  
\par 
By the continuity of $\sigma_s$ (cf.~\cite[Lemma~4.1]{phung-tcs} and \cite[Theorem~6.2]{phung-tcs}), we obtain for every $k \geq 1$ a closed subset $X_k$  of $A^G$ where: 
    \[
X_k \coloneqq (\sigma_s^{kr})^{-1}(x_0) = \{ x \in A^G \colon \sigma_s^{kr}(x) = x_0\}. 
    \]
\par 
Since $\sigma_s$ is pointwise nilpotent by hypothesis, $A^G = \cup_{k=1}^{\infty} X_k$. 
Let $x \in X_k$ for some $k \geq 1$ then we have $\sigma_s^{kr}(x)=x_0$. We find that $x \in X_{k+1}$ since
\begin{align*}
\sigma_s^{(k+1)r}(x) = \sigma_s^r(\sigma_s^{kr}(x))=\sigma_s^r(x_0) = x_0. 
\end{align*}
\par 
Consequently, $(X_k)_{k \geq 1}$ forms an exhaustion by closed subsets of $A^G$. Note that $A^G$ admits a complete metric compatible with its prodiscrete topology as it is a countable product of discrete spaces. In particular, $A^G$ is a Baire space and we infer from the Baire category theorem that the interior of $X_{n_0}$ is nonempty for some $n_0 \geq 1$ and thus there exist a  finite subset $E \subset G$ and a pattern $p \in V^E$ such that  $W= \{p\} \times A^{G \setminus E} \subset X_{n_0}$. 
\par 
By \cite[Theorem~6.2]{phung-tcs}, there exists $t \in T^G$, where $T=A^{A^N}$ with $N=M^{rn_0}$, such that $\sigma_t= \sigma_s^{rn_0}$.  Note that $N$ is a finite symmetric subset of $G$, i.e., $N=N^{-1}$, and $1_G \in N$. Moreover, since $\sigma_s^r(x_0)=x_0$, we deduce that $x_0$ is a fixed point of $\sigma_t$, i.e., $\sigma_t(x_0)=x_0$.  
It follows from the above paragraph that for every $x \in W$, we have  $\sigma_t(x)= x_0$. 
Therefore, for all $g \in G \setminus EN$ and for all configuration $x \in W$, we have: 
\begin{align}
\label{e:pointwise-uniform-nilpotency-nuca-general}
    t(g)((g^{-1}x)\vert_N) = \sigma_t(x)(g)= x_0(g) = x_0(1_G). 
\end{align}
\par 
Since $g \in G \setminus EN$, we have $gN \cap E = \varnothing$. Thus, $(g^{-1}W)\vert_M=A^M$ and we deduce from \eqref{e:pointwise-uniform-nilpotency-nuca-general} that $t(g)(z)=x_0(1_G)$ for all $z \in A^M$ and $g \in G \setminus EN$. In other words, $t(g)$ is the constant function with value $x_0(1_G)$ for every $g \in G \setminus EN$. Therefore, $\sigma_t(x)\vert_{G \setminus EN} = x_0\vert_{G \setminus EN}$ for every $x \in A^G$.  
\par 
Since $\sigma_s$ is pointwise nilpotent, so is $\sigma_t$ and there exist integers $n_w \geq 1$ where $w \in A^{EN}$  such that 
$\sigma_t^{n_w}(\tilde{w})= x_0$ where $\tilde{w}=w \times x_0\vert_{G \setminus EN}$. Since $\sigma_t(x_0)=x_0$,  it follows that $\sigma_t^{n}(\tilde{w})=x_0$  for all $n \geq n_w$. 
\par 
Consequently, if we denote 
$q = 1+  \max \{n_w \colon w \in A^{EN} \}$ then $q \in \N$ since $A$ and $EN$ are finite, and we claim that $\sigma_t^q(x)=x_0$ for all $x \in A^G$. Indeed, let $x \in A^G$ and $w = \sigma_t(x)\vert_{G \setminus EN} \in A^{EN}$. We have shown above that $\sigma_t(x)\vert_{G \setminus EN}= x_0\vert_{G \setminus EN}$. Hence, $\sigma_t(x)= \tilde{w}$.  As $q-1 \geq n_w$, we find that: 
\begin{align*}
    \sigma_{t}^q(x) = \sigma_t^{q-1}(\sigma_t(x)) = \sigma_t^{q-1}(\tilde{w})= x_0. 
\end{align*}
\par 
We conclude that $\sigma_s^{qrn_0}(x)= \sigma_t^q(x)=x_0$ for all $x \in A^G$. Hence, $\sigma_s$ is nilpotent. Since the last statement follows immediately from Lemma~\ref{l:stable-nilpotency-nuca},  the proof is complete. 
\end{proof}

\begin{corollary}
    An NUCA with finite memory over a finite alphabet and a countable group universe is nilpotent if and only if it is pointwise nilpotent.  
\end{corollary}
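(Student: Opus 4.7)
The plan is to observe that this corollary is essentially a restatement of Theorem~\ref{t:pointwise-uniform-nilpotency-nuca}, so the strategy is simply to split the biconditional into its two implications and invoke results already established.

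For the forward implication, I would note that it follows directly from unwinding Definition~\ref{d:pointwise-nil-nuca}: if $\sigma_s$ is nilpotent, then there is a constant configuration $x_0 \in A^G$ and an integer $n \geq 1$ such that $\sigma_s^n(x) = x_0$ for every $x \in A^G$. In particular, for each individual $x \in A^G$ the same $n$ witnesses pointwise nilpotency, so $\sigma_s$ is pointwise nilpotent with the same constant configuration $x_0$.

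For the reverse implication, I would apply Theorem~\ref{t:pointwise-uniform-nilpotency-nuca} verbatim: the hypotheses of the corollary match those of the theorem (finite memory $M \subset G$, countable group $G$, finite alphabet $A$, and $\sigma_s$ pointwise nilpotent), and the conclusion of the theorem is precisely that $\sigma_s$ is nilpotent.

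I expect no obstacle here, since the corollary merely packages the nontrivial half of the equivalence (already proved as Theorem~\ref{t:pointwise-uniform-nilpotency-nuca}) together with the trivial converse. The only care required is to remark that the constant configuration $x_0$ appearing in the two definitions can be taken to be the same, which is automatic from the argument above.
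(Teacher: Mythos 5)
Your proposal is correct and matches the paper's own proof: the nontrivial direction is exactly an application of Theorem~\ref{t:pointwise-uniform-nilpotency-nuca}, and the converse is immediate from Definition~\ref{d:pointwise-nil-nuca} since the uniform witness $n$ and the constant configuration $x_0$ serve for every individual configuration. No further comment is needed.
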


\begin{proof}
Suppose that $\tau$ is an NUCA with finite memory over a finite alphabet and a countable group universe. Then Theorem~\ref{t:pointwise-uniform-nilpotency-nuca} shows that $\tau$ is nilpotent whenever it is pointwise nilpotent. Since the converse is trivial, the proof of the corollary is complete.  
\end{proof}

\section{Periodic and pointwise periodic NUCA} 
\label{s:4}
As for pointwise nilpotency, we have the following natural definition of pointwise periodic NUCA. 

\begin{definition}
\label{d:pointwise-per-nuca} 
Let $G$ be a group and let $A$ be an alphabet. Let $\sigma_s \colon A^G \to A^G$ be an NUCA. We say that $\sigma_s$ is \emph{pointwise periodic} if  for every $x\in A^G$, there exists an integer $n \geq 1$ depending on $x$ such that $\sigma_s^n(x)=x$. We say that $\sigma_s$ is \emph{periodic} if there exists an integer $n \geq 1$ such that $\sigma_s^n(x)=x$ for every $x \in A^G$. 
\end{definition}

\par 
We begin with a simple but useful  observation.
\begin{lemma}
\label{l:simple-tau-tau-k-per}
Let $k \geq 1$ be an integer. Then  an NUCA $\tau$ is periodic, resp. pointwise periodic, if and only if so is $\tau^k$. 
\end{lemma}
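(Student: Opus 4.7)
The plan is to reduce everything to the semigroup identity $(\tau^k)^n = \tau^{kn}$, which holds for any self-map $\tau \colon A^G \to A^G$ and any $k, n \geq 1$. Both equivalences then follow by simple exponent arithmetic, entirely analogously to Lemma~\ref{l:nilpotency-tau-tau-k}; no use of the NUCA structure is needed beyond the fact that $\tau$ is a self-map.

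For the periodic case, I would argue as follows. Suppose first that $\tau$ is periodic and pick $n \geq 1$ with $\tau^n = \Id$. Then
\[
(\tau^k)^n = \tau^{kn} = (\tau^n)^k = \Id,
\]
so $\tau^k$ is periodic with the same exponent $n$. Conversely, if $\tau^k$ is periodic, pick $n \geq 1$ with $(\tau^k)^n = \Id$. Then $\tau^{kn} = \Id$, so $\tau$ itself is periodic with exponent $kn$.

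For the pointwise periodic case the same computation is applied separately at each configuration, the only subtlety being that the period depends on $x$. If $\tau$ is pointwise periodic, for every $x \in A^G$ choose $n_x \geq 1$ with $\tau^{n_x}(x) = x$; iterating this equality yields $\tau^{m n_x}(x) = x$ for every $m \geq 1$, and the choice $m = k$ gives $(\tau^k)^{n_x}(x) = \tau^{k n_x}(x) = x$, so $\tau^k$ is pointwise periodic (with the same witness $n_x$). Conversely, if $\tau^k$ is pointwise periodic and $(\tau^k)^{m_x}(x) = x$ for some $m_x \geq 1$, then $\tau^{k m_x}(x) = x$, so $\tau$ is pointwise periodic at $x$ with period $k m_x$.

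I do not expect any genuine obstacle in this lemma: it is purely formal and uses nothing about the cellular structure, the alphabet, or the group beyond associativity of composition. The only micro-point to handle carefully is observing that $\tau^{n_x}(x) = x$ entails $\tau^{m n_x}(x) = x$ for every $m \geq 1$, which follows by an immediate induction on $m$.
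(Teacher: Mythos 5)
Your proof is correct and takes essentially the same route as the paper's, which likewise reduces both directions to the identity $(\tau^k)^m = \tau^{km}$ and the observation that $\tau^m(x)=x$ implies $\tau^{km}(x)=x$ by an immediate induction. You are slightly more explicit about the converse direction, which the paper leaves implicit, but there is no substantive difference.
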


\begin{proof}
It suffices to note that if $\tau^m(x)=x$ for some configuration $x$ and some integer $m \geq 1$ then we have by an immediate induction:  
\[
(\tau^k)^m(x) =(\tau^{km})(x) = \tau^{(k-1)m}(\tau^m(x))= \tau^{(k-1)m}(x)=\dots=\tau^m(x)=x. 
\] 
\end{proof}

In parallel with Theorem~\ref{t:pointwise-uniform-nilpotency-nuca}, we establish the following equivalence between periodicity and  pointwise periodicity for any NUCA with finite memory and finite alphabet.    

\begin{theorem}
\label{t:pointwise-periodic-nuca} 
Let $M \subset G$ be a finite subset of a countable group $G$ and let $A$ be a finite alphabet. Let $S=A^{A^M}$ and let $s\in S^G$.   Suppose that the NUCA $\sigma_s$ is pointwise periodic. Then $\sigma_s$ is periodic. 
\end{theorem}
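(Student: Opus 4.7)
The plan is to mirror the Baire category argument from the proof of Theorem~\ref{t:pointwise-uniform-nilpotency-nuca}, adapting it to the periodic setting. After possibly enlarging $M$, I assume $1_G \in M$ and $M = M^{-1}$. For each $k \geq 1$, consider the closed set
\[
Y_k = \{x \in A^G : \sigma_s^{k!}(x) = x\},
\]
which is closed by continuity of $\sigma_s$. Since $k!$ divides $(k+1)!$, these sets nest, $Y_k \subset Y_{k+1}$, and pointwise periodicity yields $A^G = \bigcup_{k \geq 1} Y_k$. As $A^G$ is a Baire space (a countable product of finite discrete spaces), some $Y_{n_0}$ has nonempty interior, so there exist a finite subset $E \subset G$ and a pattern $p \in A^E$ with $W = \{p\} \times A^{G \setminus E} \subset Y_{n_0}$.

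Set $n = n_0!$ and $\tau = \sigma_s^n$, and invoke \cite[Theorem~6.2]{phung-tcs} to write $\tau = \sigma_t$ for some $t$ with symmetric memory $N = M^n$ containing $1_G$. Just as in the nilpotent proof, the equality $\tau(x) = x$ valid on $W$ forces $t(g)$ to coincide with the projection $z \mapsto z(1_G)$ for every $g \in G \setminus EN$: the trace $(g^{-1}x)|_N$ ranges over all of $A^N$ as $x$ varies in $W$ (because $gN \cap E = \varnothing$), while $x(g) = (g^{-1}x)(1_G)$ is the value at $1_G \in N$. Consequently $\tau(x)(g) = x(g)$ for every $x \in A^G$ and every $g \in G \setminus EN$.

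To pass from pointwise to uniform periodicity, I run a fiber argument that exploits the finiteness of $A$. Writing each $x \in A^G$ as $x = z \sqcup y$ with $z = x|_{EN} \in A^{EN}$ and $y = x|_{G \setminus EN} \in A^{G \setminus EN}$, the previous step shows $\tau$ fixes $y$, so $\tau$ induces a self-map $\tau_y : A^{EN} \to A^{EN}$. Since $\tau(x)|_{EN}$ depends only on $x|_{EN^2}$, the map $\tau_y$ depends on $y$ only through the finite restriction $y|_{EN^2 \setminus EN}$, so only finitely many maps $\tau_y$ occur. By Lemma~\ref{l:simple-tau-tau-k-per}, $\tau$ is pointwise periodic, so every $z \in A^{EN}$ is $\tau_y$-periodic; a self-map of a finite set in which every point is periodic is necessarily a permutation. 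Letting $m$ denote the least common multiple of the orders of these finitely many permutations gives $\tau^m(x) = x$ for every $x \in A^G$, hence $\sigma_s^{nm}(x) = x$ for every $x$, proving $\sigma_s$ periodic.

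The main obstacle relative to the nilpotent case is the absence of a common fixed target for the iterates, which is what made the sets $(\sigma_s^{kr})^{-1}(x_0)$ available there. I circumvent this by taking factorial-power fixed-point sets $Y_k$, which nest automatically; the final fiber-permutation step is where the finiteness of $A$ is decisive, since it both makes $A^{EN}$ finite and bounds the number of distinct $\tau_y$.
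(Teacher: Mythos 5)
Your proof is correct and follows essentially the same route as the paper's: a Baire category argument produces a cylinder of fixed points of some power $\sigma_s^{n}$, the induced local rules off a finite set $EN$ are then forced to be the projection $z \mapsto z(1_G)$, and finiteness of $A^{EN^2}$ yields a common period. Your factorial-power nesting and the permutation-on-fibers phrasing of the last step are only cosmetic variants of the paper's $X_k \subset X_{kl}$ observation and its tracking of the configurations $\tilde{w}$ indexed by $w \in A^{EN^2}$.
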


\begin{proof}
Up to enlarging $M$, we can suppose without loss of generality that $1_G \in M$ and $M=M^{-1}$. 
By the continuity of $\sigma_s$ (cf.~\cite[Lemma~4.1]{phung-tcs}), we obtain for every $k \geq 1$ a closed subset $X_k$  of $A^G$ where: 
    \[
X_k \coloneqq  \{ x \in A^G \colon \sigma_s^{k}(x) = x\}. 
    \]
\par 
As in the proof of Theorem~\ref{t:pointwise-uniform-nilpotency-nuca}, we find that $X_{k} \subset X_{kl}$ for all integers $k,l \geq 1$ and that we infer from the Baire category theorem there exist an integer  $n_0 \geq 1$, a finite subset $E \subset G$, and a pattern $p \in V^E$ such that  $W= \{p\} \times A^{G \setminus E} \subset X_{n_0}$.  
By \cite[Theorem~6.2]{phung-tcs}, there exists $t \in T^G$, where $T=A^{A^N}$ with $N=M^{n_0}$, such that $\sigma_t= \sigma_s^{n_0}$. 
It follows that $\sigma_t(x)= x$ for every $x \in W$. 
Hence, for $g \in G \setminus EN$ and $x \in W$, we find that: 
\begin{align}
\label{e:pointwise-uniform-periodic-nuca-general} 
t(g)((g^{-1}x)\vert_N) = \sigma_t(x)(g)= x(g). 
\end{align}
\par 
Note that $gN \cap E = \varnothing$ as $g \in G \setminus EN$. Thus, we have $(g^{-1}W)\vert_M=A^M$ and  \eqref{e:pointwise-uniform-periodic-nuca-general} then implies that $t(g) \colon A^{N} \to A$ is the canonical projection $A^N \to A^{\{1_G\}}$ induced by the inclusion 
$\{1_G\} \subset N$. Consequently, we have $\sigma_t(x)\vert_{G \setminus EN} = x\vert_{G \setminus EN}$ for all $x \in A^G$. 
Let us fix a state $a \in A$. 
Since $\sigma_s$ is pointwise periodic, so is $\sigma_t$ and there exist integers $n_w \geq 1$ where $w \in A^{EN^2}$  such that 
$\sigma_t^{n_w}(\tilde{w})= \tilde{w}$  where $\tilde{w}=w \times a^{G \setminus EN^2}$.  Since  
 $\sigma_t(y)\vert_{G \setminus EN}= y\vert_{G \setminus EN}$ for every configuration $y \in A^G$, we deduce that $\sigma_t^{k}(z)\vert_{EN^2 \setminus EN} = w\vert_{EN^2\setminus EN}$ for every $k \geq 1$ and $z \in A^G$ such that $z\vert_{EN^2\setminus EN} = w\vert_{EN^2 \setminus EN}$.  Consequently, as $N$ is a memory of $\sigma_t$ and $EN \subset EN^2$, we find that 
 \begin{align}
 \label{e:t:pointwise-periodic-nuca-1-2-3}
 \sigma_t^{kn_w}(z)\vert_{EN}=  \sigma_t^{kn_w}(\tilde{w})\vert_{EN}=w\vert_{EN}
 \end{align}
 for every $k \geq 1$ and every  configuration $z \in A^G$ such that $z\vert_{EN^2}= w\vert_{EN^2}$. 
\par 
Since $A$ and $EN$ are finite, we obtain a finite integer
\[ 
q = \prod_{w \in A^{EN^2}} n_w  \geq 1.
\]
\par 
We shall prove that $\sigma_t^q= \Id$. Indeed, let $x \in A^G$ and $w = x\vert_{EN^2} \in A^{EN^2}$. For every $k \geq 1$, we have seen  that 
\begin{equation}
    \label{e:t:pointwise-periodic-nuca-1-2-3-4-5}\sigma_t^k(x)\vert_{G \setminus EN}= x\vert_{G \setminus EN}. 
\end{equation}
\par 
On the other hand, for $k_w = \frac{q}{n_w} \in \N$, we infer from \eqref{e:t:pointwise-periodic-nuca-1-2-3} that: 
\begin{align}
\label{e:t:pointwise-periodic-nuca-1-2-3-4}
    \sigma_{t}^q(x)\vert_{EN} = \sigma_t^{k_w n_w}(x)\vert_{EN} =  \sigma_t^{k_w n_w}(\tilde{w})\vert_{EN} = w\vert_{EN}= x\vert_{EN}. 
\end{align}
\par 
We can thus conclude from \eqref{e:t:pointwise-periodic-nuca-1-2-3-4-5} and \eqref{e:t:pointwise-periodic-nuca-1-2-3-4} that $\sigma_s^{qn_0}(x)= \sigma_t^q(x)=x$ for all $x \in A^G$. Hence, $\sigma_s^{qn_0}=\sigma_t^q=\Id$  and the proof is complete. 
\end{proof}

\section{Eventually periodic NUCA}
\label{s:5}
To generalize the periodicity properties of NUCA, we have  the  notion of eventual periodic NUCA which holds only after some iteration of the NUCA.    

\begin{definition}
\label{d:pointwise-eventually-polynomial-lnuca-2} Let $\sigma_s \colon A^G \to A^G$ be an NUCA over a group universe $G$ and an alphabet $A$.  We say that $\sigma_s$ is \emph{pointwise eventually periodic} if  for every $x\in A^G$, there exist  integers $n,m \geq 1$ depending on $x$ such that $\sigma_s^{n+m}(x)=\sigma_s^m(x)$. We say that $\sigma_s$ is \emph{eventually periodic} if there exist integers $m,n \geq 1$ such that $\sigma_s^{n+m}(x)=\sigma_s^m(x)$ for all $x \in A^G$. 
\end{definition}

\par 
As for periodicity and pointwise periodicity, we also have the following simple observation similar to Lemma~\ref{l:simple-tau-tau-k-per}. 
\begin{lemma}
\label{l:simple-periodic-observation}
Let $k \geq 1$ be an integer. Then an NUCA $\tau$ is eventually periodic, resp. pointwise eventually periodic, if and only if so is $\tau^k$. Moreover, $\tau$ is eventually periodic if and only if $\tau^{2n}=\tau^n$ for some integer $n \geq 1$. 
\qed 
\end{lemma}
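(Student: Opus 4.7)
The plan is to unpack the monoid-of-iterates identity $(\tau^k)^j=\tau^{kj}$ together with the elementary arithmetic of eventually periodic sequences of endomorphisms. Throughout, the main observation I would use repeatedly is that if $\tau^{m+n}(x)=\tau^m(x)$ for some configuration $x$ and integers $m,n\geq 1$, then by an immediate induction on $j\geq 0$ we have
\[
\tau^{\ell+jn}(x)=\tau^{\ell}(x)\qquad\text{for all }\ell\geq m,\ j\geq 0,
\]
which is the same tactic as in Lemma~\ref{l:simple-tau-tau-k-per}. All the claims essentially reduce to this observation.

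For the first statement, I would first treat the uniform case. If $\tau^{m+n}=\tau^m$, then taking $M=m$ and $N=n$ I compute $(\tau^k)^{M+N}=\tau^{km+kn}$; since $km\geq m$ and $kn$ is a multiple of $n$, the displayed identity above yields $\tau^{km+kn}=\tau^{km}=(\tau^k)^M$, so $\tau^k$ is eventually periodic. Conversely, from $(\tau^k)^{M+N}=(\tau^k)^M$ I read off $\tau^{kM+kN}=\tau^{kM}$ directly, giving the eventual periodicity of $\tau$ with $m=kM$, $n=kN$. The pointwise version is carried out by exactly the same two lines, with $m,n$ (and $M,N$) allowed to depend on $x$; no compactness or finiteness is needed.

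For the ``moreover'' clause, one direction is immediate: $\tau^{2n}=\tau^n$ is a special case of eventual periodicity with $m=n$. For the other direction, starting from $\tau^{m+p}=\tau^m$, I would choose $n$ to be any common multiple of $p$ that is $\geq m$ (e.g.\ $n=mp$). Writing $n=jp$ with $jp\geq m$, the displayed identity gives $\tau^{n+jp}=\tau^n$, i.e.\ $\tau^{2n}=\tau^n$.

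No real obstacle is expected here; the argument is purely formal manipulation of exponents and relies only on associativity of composition, with the key algebraic fact already isolated in the proof of Lemma~\ref{l:simple-tau-tau-k-per}. The only tiny point worth writing out is the choice of the multiple of $p$ in the ``moreover'' direction, to ensure both $n\geq m$ and $p\mid n$ simultaneously, so that the iteration of $\tau^{m+p}=\tau^m$ applies.
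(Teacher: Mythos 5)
Your argument is correct, and it is exactly the elementary exponent manipulation the paper intends: the lemma is stated with no proof (the paper marks it \qed, indicating it is analogous to Lemma~\ref{l:simple-tau-tau-k-per}), and your key identity $\tau^{\ell+jn}(x)=\tau^{\ell}(x)$ for $\ell\geq m$ is precisely the right isolation of what is needed, including the small care about choosing $n$ a multiple of the period with $n\geq m$ in the ``moreover'' direction. Nothing is missing.
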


\par 
The main result of the section is the following which states that pointwise eventual periodicity is equivalent to eventual periodicity. The proof is somewhat more complicated than the proof of Theorem~\ref{t:pointwise-periodic-nuca}. 

\begin{theorem}
\label{t:pointwise-uniform-eventually-periodic-nuca} 
Let $M \subset G$ be a finite subset of a countable group $G$ and let $A$ be a finite alphabet. Let $S=A^{A^M}$, $s \in S^G$, and suppose that $\sigma_s$ is pointwise eventually periodic. Then $\sigma_s$ is eventually  periodic. 
%, i.e.,  $\sigma^{m+n}_s = \sigma_s^m$ for some integers $m,n \geq 1$. %Moreover, $\sigma_t^{m+n}= \sigma_t^m$ for all $t \in \Sigma(s)$.
\end{theorem}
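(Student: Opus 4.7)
The plan is to follow the same three–stage template used in the proofs of Theorem~\ref{t:pointwise-uniform-nilpotency-nuca} and Theorem~\ref{t:pointwise-periodic-nuca}: (i) use Baire category to localize eventual periodicity to a cylinder, (ii) use the induced local maps from Section~\ref{s:induced-local-map} to promote this to a uniform statement outside a finite region, and (iii) close the finite region via a finite-state dynamics argument (which will here replace the pattern-by-pattern LCM trick of Theorem~\ref{t:pointwise-periodic-nuca}, since eventual periodicity needs less than periodicity).

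After replacing $M$ by $M \cup M^{-1} \cup \{1_G\}$, I would set $X_L := \{x \in A^G : \sigma_s^{2L}(x) = \sigma_s^L(x)\}$ for each $L \geq 1$. These are closed in $A^G$, and the elementary observation that $\sigma_s^{n+m}(x) = \sigma_s^m(x)$ with $L \geq m$ a multiple of $n$ forces $\sigma_s^{2L}(x) = \sigma_s^L(x)$ gives $A^G = \bigcup_{L \geq 1} X_L$; a direct induction also shows $X_L \subset X_{kL}$. Because $A^G$ is a Baire space, some $X_{L_0}$ has nonempty interior, so there exist a finite $E \subset G$ and $p \in A^E$ with $W := \{p\} \times A^{G \setminus E} \subset X_{L_0}$. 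Now set $\rho := \sigma_s^{L_0}$, $N := M^{L_0}$ (a memory of $\rho$), and write $\rho = \sigma_u$, $\rho^2 = \sigma_{u'}$ with $u, u' \in U^G$ for $U = A^{A^{N^2}}$. For every $g \in G \setminus EN^2$ one has $gN^2 \cap E = \varnothing$, hence $(g^{-1}W)|_{N^2} = A^{N^2}$; since $\rho^2 = \rho$ on $W$, formulas \eqref{e:induced-local-maps}--\eqref{e:induced-local-maps-general} force $u(g) = u'(g)$. Setting $F := EN^2$, this gives $\rho^2(x)|_{G \setminus F} = \rho(x)|_{G \setminus F}$ for \emph{every} $x \in A^G$, and by induction
\[
\rho^{k+1}(x)|_{G \setminus F} = \rho^k(x)|_{G \setminus F} = \rho(x)|_{G \setminus F} \qquad \text{for all } k \geq 1,\ x \in A^G.
\]

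To finish, I would exploit that the sequence $(\rho^k(x))_{k \geq 1}$ is completely determined on $G \setminus F$ after one step. The update $\rho^{k+1}(x)|_F$ depends only on $\rho^k(x)|_{FN} = \rho^k(x)|_F \sqcup \rho^k(x)|_{FN \setminus F}$, and for $k \geq 1$ the boundary $\rho^k(x)|_{FN \setminus F}$ is frozen at $b := \rho(x)|_{FN \setminus F}$. Hence there is a map $T \colon A^F \times A^{FN \setminus F} \to A^F$ with $\rho^{k+1}(x)|_F = T(\rho^k(x)|_F, b)$ for every $k \geq 1$. For each fixed $b$ the self-map $T_b := T(\cdot, b)$ of the finite set $A^F$ is automatically eventually periodic with some preperiod $Q_b$ and period $P_b$; taking $Q := \max_b Q_b$ and $P := \operatorname{lcm}_b P_b$ (finite, since $A^{FN \setminus F}$ is finite) yields $T_b^{Q+P} = T_b^Q$ for every $b$, whence $\rho^{Q+P+1}(x)|_F = \rho^{Q+1}(x)|_F$ for all $x$. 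Combined with the stabilization outside $F$, this gives $\rho^{Q+P+1} = \rho^{Q+1}$, so $\rho = \sigma_s^{L_0}$ is eventually periodic, and then $\sigma_s$ is eventually periodic by Lemma~\ref{l:simple-periodic-observation}.

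The only genuinely delicate step I expect is the Baire reduction: a priori, pointwise eventual periodicity gives a \emph{two-parameter} family of closed sets $\{x : \sigma_s^{n+m}(x) = \sigma_s^m(x)\}$, and one needs to collapse this to a one-parameter increasing family to apply Baire cleanly; this is the content of the multiple-of-$n$ argument above. Everything past Baire is a routine combination of the induced local map machinery (available since $M$ is finite) with the tautology that a self-map of a finite set is eventually periodic, uniformly over a finite parameter set.
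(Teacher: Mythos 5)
Your proof is correct, and while it follows the same Baire--plus--local-maps skeleton as the paper, it diverges in two substantive ways that are worth recording. First, the paper does not collapse the two-parameter family: it applies Baire directly to the countable union $\bigcup_{k,l}X_{k,l}$ (which is perfectly legitimate, so your remark that the collapse is \emph{needed} for Baire is not quite right), and then has to carry the pair $(k_0,l_0)$ through the rest of the argument. The real payoff of your normalization $\rho^2=\rho$ on $W$ comes in the second stage: by writing $\rho$ and $\rho^2$ as NUCA with the common memory $N^2$ and comparing their local defining maps at every $g\notin EN^2$, you obtain $\rho^{k+1}(x)\vert_{G\setminus F}=\rho^{k}(x)\vert_{G\setminus F}$ for \emph{all} $x\in A^G$, whereas the paper only establishes the analogous freezing on the image $\Gamma(k_0)=\sigma_s^{k_0}(A^G)$, via a more delicate restriction of $t(g)$ to the subsets $U_g=\Gamma(k_0)_{gN}$. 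Second, your closing argument is genuinely different: the paper re-invokes the pointwise hypothesis on the finitely many patterns $w\in\Gamma(k_0)_{EN^3}$, extends each to a configuration $\tilde w$, and takes a maximum of preperiods and a product of periods; you instead observe that once the boundary $b=\rho(x)\vert_{FN\setminus F}$ is frozen, the window dynamics is governed by a self-map $T_b$ of the finite set $A^F$, which is eventually periodic by pure pigeonhole, uniformly over the finitely many $b$. Your version is more elementary (the pointwise hypothesis is used only once, in the Baire step) and avoids the bookkeeping with $\Gamma(k_0)$ and the extensions $\tilde w$; the paper's version has the mild advantage of staying entirely within the machinery already set up for Theorems~\ref{t:pointwise-uniform-nilpotency-nuca} and~\ref{t:pointwise-periodic-nuca}. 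All the individual steps you use (closedness of $X_L$, the covering $A^G=\bigcup_L X_L$ via taking $L$ a multiple of $n$ with $L\ge m$, symmetry of $N^2$ giving $gN^2\cap E=\varnothing$, the identification $u(g)=u'(g)$ from $(g^{-1}W)\vert_{N^2}=A^{N^2}$, and the final appeal to Lemma~\ref{l:simple-periodic-observation}) check out.
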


\begin{proof}
Up to enlarging $M$, we can suppose that $1_G \in M$. For integers $k, l \geq 1$, we define:
    \begin{align*}
X_{k,l}   = \{ x \in V^G \colon \sigma_s^{k+l}(x) = \sigma_s^k(x)\}. 
    \end{align*}
\par 
Then $X_{k,l} \subset V^G$ is closed by the continuity of $\sigma_s^q$ for all $q \geq 1$ (cf.~\cite[Lemma~4.1]{phung-tcs}). 
Since $\sigma_s$ is pointwise eventually periodic by hypothesis, we have $V^G = \cup_{k, l \geq 1}^{\infty} X_{k,l}$. Hence, the Baire category theorem implies  that there exist $k_0, l_0 \geq 1$ such that the interior of $X_{k_0,l_0}$ is nonempty. Consequently, we can find a finite subset $E \subset G$ and a pattern $p \in V^E$ such that $W= \{p\} \times V^{G \setminus E} \subset X_{k_0, l_0}$. Let $N=M^{\max(k_0,l_0)}$ then $1_G \in N$  and we infer from  \cite[Theorem~6.2]{phung-tcs} that there exists $t \in T^G$, where $T=A^{A^N}$, such that 
\[
\sigma_t= \sigma_s^{l_0}.
\] 
\par 
For every $x \in W$, we have $\sigma_s^{k_0+l_0}(x)= \sigma_s^{k_0}(x)$ and thus $\sigma_s^{ql_0}(\sigma_s^{k_0}(x))= \sigma_s^{k_0}(x)$ for all $q \geq 0$. 
For every $k \geq 1$, we denote $\Gamma(k) = \sigma_s^{k}(A^G) \subset A^G$ and $\Lambda(k)=\sigma_s^{k}(W) \subset A^G$. Then it follows that for every integer $q \geq 0$, we have: 
\begin{align}
\label{e:pointwise-uniform-eventually-periodic-nuca=1-a}
\sigma_t^{q}\vert_{\Lambda(k_0)} = \Id_{\Lambda(k_0)}.  
\end{align} 
\par 
We note that $\Gamma(k)\subset \Gamma(l)$ for all $k \geq l \geq 1$. Moreover, by definition of $W$ and by the choice of $N$, we have for every element $g \in G \setminus EN^2$ that: 
\begin{equation}
\label{e:pointwise-uniform-eventually-periodic-nuca=1-a-b}
U_g \coloneqq \Lambda(k_0)_{gN} = \Gamma(k_0)_{gN}  \subset A^{gN}. 
\end{equation}
\par 
Let $\varphi_g \colon A^{N} \to A^{gN} $ be the canonical bijection induced by the bijection $N \to gN$ given by $h \mapsto gh$. Then since $N$ is a memory set of $\sigma_t$,  we deduce from \eqref{e:pointwise-uniform-eventually-periodic-nuca=1-a} and \eqref{e:pointwise-uniform-eventually-periodic-nuca=1-a-b} that for every $g \in G \setminus EN^2$, the restriction map $t(g)\vert_{\varphi^{-1}(U_g)}\colon \varphi^{-1}(U_g) \to A$ is nothing but the restriction to  $\varphi^{-1}(U_g)\subset A^N$ of the  canonical projection $A^N \to A$ given by $ z \mapsto z(1_G)$. In other words, 
\begin{align}
\label{e:e:pointwise-uniform-eventually-periodic-nuca=2-a}
\sigma_t(x)\vert_{G \setminus EN^2} = x\vert_{G \setminus EN^2}, \quad \text{for all } x \in \Gamma(k_0).
\end{align}
\par 
 As $\sigma_t(\Gamma(k_0)) \subset \Gamma(k_0)$, an immediate induction using \eqref{e:e:pointwise-uniform-eventually-periodic-nuca=2-a}  shows that for every $k \geq 1$, we have for all $z \in \Gamma(k_0)$ that:  
\begin{equation}
\label{e:e:pointwise-uniform-eventually-periodic-nuca=2-b}
    \sigma_t^{k}(z)\vert_{G \setminus EN^2} = z\vert_{G \setminus EN^2} 
\end{equation}
\par 
Since $\sigma_s$ is pointwise eventually periodic, so is $\sigma_t$ by Lemma~\ref{l:simple-periodic-observation}. Hence, for every $w \in \Gamma(k_0)_{EN^3}\subset A^{EN^3}$, there exist integers $k_w, l_w \geq k_0$  such that 
\begin{equation}
\label{e:pointwise-even-per-main-relat-1-c-a}
\sigma_t^{k_w+l_w}(\tilde{w})= \sigma_t^{k_w}(\tilde{w})
\end{equation}
where $\tilde{w}\in \Gamma(k_0)$ is an arbitrary but fixed configuration  extending $w$. 
Let us define the positive  integers:  
\[
m=  \max \{k_w\colon w \in \Gamma(k_0)_{EN^3}\}, \quad    n=l_0 \prod_{w \in \Gamma(k_0)_{EN^3} }l_w, \quad n_w = \frac{n}{l_w}.   
\]
\par 
Let $x \in A^G$, $z= \sigma_s^{k_0}(x) \in  \Gamma(k_0)$, and 
$w = z\vert_{EN^3}$. Then we infer from \eqref{e:e:pointwise-uniform-eventually-periodic-nuca=2-b}  that 
\begin{equation}
\label{e:pointwise-eventually-periodic-nuca-1-a}\sigma_t^{n+m+k_0}(x)\vert_{G \setminus EN^2}= \sigma_t^{m+k_0}(x)\vert_{G \setminus EN^2}. 
\end{equation}
\par 
Since $N$ is a memory of $\sigma_t$ and $z\vert_{EN^3}=\tilde{w}\vert_{EN^3}$, we deduce by an immediate induction on $k$ that for every $k \geq 0$, we have: 
\begin{equation}
    \label{e:pointwise-even-per-main-relat}
\sigma_t^{k}(z)\vert_{EN^2}=\sigma_t^{k}(\tilde{w})\vert_{EN^2},  
\end{equation}
where the base case $k=0$ results  from \eqref{e:e:pointwise-uniform-eventually-periodic-nuca=2-b}.  
By the choice of $k_w, l_w$ and $m,n$ that for every $q \geq 1$  we have:  
 \begin{align}
 \label{e:pointwise-eventually-periodic-nuca-1-2-3}
  \sigma_t^{n+m+k_0}(x)\vert_{EN^2} &  =\sigma_t^{n+m} (\sigma_s^{k_0}(x)) \vert_{EN^2} \\
  & =\sigma_t^{n+m} (z) \vert_{EN^2}  \nonumber \\
  & =  \sigma_t^{n_wl_w+m}(\tilde{w})\vert_{EN^2} & \text{(by } \eqref{e:pointwise-even-per-main-relat}) \nonumber\\
  & = \sigma_t^{m}(\tilde{w})\vert_{EN^2} & \text{(by } \eqref{e:pointwise-even-per-main-relat-1-c-a} \nonumber \\
  & = \sigma_t^{m}(z)\vert_{EN^2} & \text{(by } \eqref{e:pointwise-even-per-main-relat}) \nonumber\\
  & =\sigma_t^{m+k_0}(x)\vert_{EN^2} \nonumber
 \end{align}
\par 
We can thus conclude from \eqref{e:pointwise-eventually-periodic-nuca-1-a} and \eqref{e:pointwise-eventually-periodic-nuca-1-2-3} that $\sigma_t^{n+m+k_0}(x)= \sigma_t^{m+k_0}(x)$ for all $x \in A^G$. Hence, $\sigma_t$ is eventually periodic and thus so is $\sigma_s$ by Lemma~\ref{l:simple-periodic-observation}. The proof is complete. 
\end{proof}

\section{Cayley-Hamilton linear NUCA}
\label{s:6}
\subsection{Linear NUCA satisfying polynomial equations}
Generalizing the nilpotency and eventual periodicity properties, we introduce the   Cayley-Hamilton property for linear NUCA which satisfy polynomial equations.  

\begin{definition}
\label{d:cayley-hamilton-lnuca} Let $\sigma_s \colon A^G \to A^G$ be a linear NUCA over a group universe $G$ and a $k$-vector space alphabet $V$. We say that $\sigma_s$ is \emph{pointwise Cayley-Hamilton} if for every $x\in A^G$, there exists a nonzero polynomial $P(z) \in k[z]$ depending on $x$ such that $P(\sigma_s)(x)=0$. We say that $\sigma_s$ is \emph{Cayley-Hamilton} if there exists a nonzero  polynomial $P(z) \in k[z]$ such that $P(\sigma_s)=0$.   
\end{definition}
\par 
It is clear from the above definition that every nilpotent linear NUCA and every periodic or eventually periodic linear NUCA are Cayley-Hamilton.
\par 
We begin with the following Lemma which generalizes Lemma~\ref{l:stable-periodic-nuca} in the case of linear NUCA with finite memory.  
\begin{lemma}
    \label{l:stable-cayley-hamilton-linear-nuca} 
Let $M $ be a finite subset of a  group $G$ and let $V$ be a $k$-vector space. Let $s \in \LL(V^M, V)^G$ and suppose that  there exists a polynomial $P(z) \in k[z]$ such that $P(\sigma_s)=0$. Then  $P(\sigma_t)=0$ for every $t \in \Sigma(s)$.   
\end{lemma}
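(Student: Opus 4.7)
The plan is to imitate the structure of the proof of Lemma~\ref{l:stable-periodic-nuca}, upgraded to accommodate an arbitrary polynomial instead of the difference $z^m - z^n$. Crucially, this is where linearity enters: $P(\sigma_s) = \sum_{i=0}^d a_i \sigma_s^i$ is only itself a (linear) NUCA because each $\sigma_s^i$ is $k$-linear, so the weighted sum makes sense. The approach will be: first transfer the identity $P(\sigma_s) = 0$ to every $G$-shift $gs$ using Lemma~\ref{l:power-almost-g-invariant}, then realize all three operators $P(\sigma_s)$, $P(\sigma_{gs})$, and $P(\sigma_t)$ as linear NUCA with a common finite memory set $N$, and finally match local defining maps site-by-site using the fact that $t \in \Sigma(s)$ means $t$ agrees with a shift $gs$ on any prescribed finite window.

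For the first step, write $P(z) = \sum_{i=0}^d a_i z^i$. For any $g \in G$ and $x \in V^G$, Lemma~\ref{l:power-almost-g-invariant} gives $\sigma_{gs}^i(x) = g\sigma_s^i(g^{-1}x)$, and since the $G$-action on $V^G$ is $k$-linear, we get
\begin{equation*}
P(\sigma_{gs})(x) = \sum_{i=0}^d a_i g\sigma_s^i(g^{-1}x) = g\Bigl(\sum_{i=0}^d a_i \sigma_s^i\Bigr)(g^{-1}x) = gP(\sigma_s)(g^{-1}x) = 0.
\end{equation*}
Thus $P(\sigma_{gs}) = 0$ for every $g \in G$.

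For the second step, after enlarging $M$ so that $1_G \in M$, set $N = M^d$; then $M^i \subset N$ for every $0 \le i \le d$. By \cite[Theorem~6.2]{phung-tcs} each iterate $\sigma_s^i$ has memory $M^i \subset N$, so the operators $P(\sigma_s)$, $P(\sigma_{gs})$, and $P(\sigma_t)$ can all be written as linear NUCA over the common memory $N$, say $P(\sigma_s) = \sigma_u$, $P(\sigma_{gs}) = \sigma_{u_g}$, and $P(\sigma_t) = \sigma_v$, with $u, u_g, v \in \LL(V^N, V)^G$. Crucially, the local defining map at any site $h$ is determined by the values of the underlying configuration of defining maps on the window $hN$: this follows from tracking the composition/addition formula \eqref{e:induced-local-maps} through the iterates. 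Hence for each $h \in G$, the map $v(h) \in \LL(V^N, V)$ is a fixed $k$-linear combination of compositions of the local maps $t(g')$ for $g' \in hN \cdot \dots \cdot N$ (a finite set depending only on $h$ and $N$), and likewise for $u_g(h)$ from the entries of $gs$.

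To finish, fix $t \in \Sigma(s)$ and $h \in G$. Enlarge the relevant window to a finite set $W_h \subset G$ that controls $v(h)$; since $t \in \Sigma(s)$ we can choose $g \in G$ with $t|_{W_h} = (gs)|_{W_h}$, whence $v(h) = u_g(h)$. But $\sigma_{u_g} = P(\sigma_{gs}) = 0$ forces $u_g(h) = 0$ as a linear map, so $v(h) = 0$. Since $h$ was arbitrary, $v \equiv 0$, i.e.\ $P(\sigma_t) = \sigma_v = 0$. The main obstacle I anticipate is purely bookkeeping: verifying cleanly that the site $h$ of $P(\sigma_t)$ is determined by $t$ on a finite window around $h$ that is the same for $t$, $s$, and $gs$, so that the approximation $t|_{W_h} = (gs)|_{W_h}$ really does transfer the vanishing of $u_g(h)$ to $v(h)$. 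This is the same mechanism used in Lemma~\ref{l:stable-nilpotency-nuca} and Lemma~\ref{l:stable-periodic-nuca}, so it should go through with only notational adjustments.
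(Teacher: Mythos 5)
Your proposal is correct and follows essentially the same route as the paper, which itself proves this lemma by declaring it \emph{mutatis mutandis} the proof of Lemma~\ref{l:stable-periodic-nuca}: transfer the identity to every shift $gs$ via Lemma~\ref{l:power-almost-g-invariant} (using linearity of the $G$-action to pass the polynomial combination through), realize $P(\sigma_{gs})$ and $P(\sigma_t)$ as linear NUCA with a common finite memory $N=M^{\deg P}$, and match local defining maps site-by-site from $t\vert_{hN}=(gs)\vert_{hN}$. You have correctly filled in the two points the paper leaves implicit, namely where linearity is actually used and why a single finite window around $h$ controls the local rule of $P(\sigma_t)$ at $h$.
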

\begin{proof}
Since $\sigma_s$ has finite memory, the proof is similar, \emph{mutatis mutandis}, to the proof of Lemma~\ref{l:stable-periodic-nuca}. 
\end{proof}
\par 
We establish the following general result which states that the pointwise Cayley-Hamilton property and the Cayley-Hamilton property are equivalent for linear NUCA with finite memory over countable vector space alphabet.  

\begin{theorem}
    \label{t:pointwise-cayley-hamilton-linear-nuca} 
Let $M $ be a finite subset of a countable group $G$ and let $V$ be a finite-dimensional vector space over a countable field $k$.   
Let $s\in \LL(V^M,V)^G$ and suppose that $\sigma_s$ is pointwise Cayley-Hamilton. Then $\sigma_s$ is Cayley-Hamilton, i.e., there exists a nonzero polynomial $P \in k[z]$ such that $P(\sigma_s)=0$. Moreover, for all $t \in \Sigma(s)$, we have $P(\sigma_t)=0$.  
\end{theorem}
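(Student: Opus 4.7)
The plan is to combine a Baire category argument in the spirit of Theorem~\ref{t:pointwise-uniform-nilpotency-nuca} with a finite-dimensional reduction that allows the classical Cayley--Hamilton theorem to be invoked. The new subtlety, compared with the proofs of Theorem~\ref{t:pointwise-periodic-nuca} and Theorem~\ref{t:pointwise-uniform-eventually-periodic-nuca}, is that the annihilating polynomial is allowed to depend on the configuration; so I first extract a single candidate polynomial $P_0$ via Baire, and then multiply it by an auxiliary polynomial coming from linear algebra on a finite-dimensional $\sigma_s$-invariant subspace.

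Since $k$ is countable, $k[z] \setminus \{0\}$ is countable. For each such $P$, the set $X_P := \{x \in V^G : P(\sigma_s)(x) = 0\}$ is a closed vector subspace of $V^G$, because $P(\sigma_s)$ is a continuous linear map (a finite $k$-linear combination of compositions of the linear NUCA $\sigma_s$). By hypothesis $V^G = \bigcup_{P \neq 0} X_P$. Since $V$ is countable (finite-dimensional over a countable field) and $G$ is countable, $V^G$ is a Polish space and therefore a Baire space, so some $X_{P_0}$ has nonempty interior and contains a cylinder $W = \{p\} \times V^{G \setminus E}$ for a finite $E \subset G$ and a pattern $p \in V^E$.

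Now I exploit linearity: since $P_0(\sigma_s)$ is linear and $W \subset \Ker P_0(\sigma_s)$, taking differences of elements of $W$ yields $\{0\}^E \times V^{G \setminus E} \subset \Ker P_0(\sigma_s)$. By \cite[Theorem~6.2]{phung-tcs}, $P_0(\sigma_s) = \sigma_u$ for some $u \in \LL(V^N, V)^G$ with $N \subset G$ a finite memory. For every $g \in G$ with $gN \cap E = \varnothing$, the restriction $(g^{-1}y)\vert_N$ ranges over all of $V^N$ as $y$ ranges over $\{0\}^E \times V^{G \setminus E}$, so the vanishing of $P_0(\sigma_s)(y)(g) = u(g)((g^{-1}y)\vert_N)$ forces $u(g) = 0$. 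Setting $F := \{g \in G : gN \cap E \neq \varnothing\}$, which is finite, we deduce $\im P_0(\sigma_s) \subset V_F := \{x \in V^G : x\vert_{G \setminus F} = 0\}$, a finite-dimensional subspace of $V^G$.

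Finally, $P_0(\sigma_s)$ commutes with $\sigma_s$, so $\im P_0(\sigma_s)$ is $\sigma_s$-stable and $\sigma_s$ restricts to an endomorphism of the finite-dimensional space $\im P_0(\sigma_s) \subset V_F$. The classical Cayley--Hamilton theorem yields a nonzero polynomial $R \in k[z]$ with $R(\sigma_s)\vert_{\im P_0(\sigma_s)} = 0$, and then $P := R \cdot P_0 \neq 0$ satisfies $P(\sigma_s)(x) = R(\sigma_s)(P_0(\sigma_s)(x)) = 0$ for all $x \in V^G$. The additional statement $P(\sigma_t) = 0$ for $t \in \Sigma(s)$ follows immediately from Lemma~\ref{l:stable-cayley-hamilton-linear-nuca}. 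I expect the most delicate step to be the passage from the Baire cylinder to the pointwise vanishing of $u(g)$ outside a finite set: it relies on carefully controlling a memory of the composed NUCA $P_0(\sigma_s)$ via \cite[Theorem~6.2]{phung-tcs} and on using linearity both to pass from the affine cylinder $W$ to the linear subspace $\{0\}^E \times V^{G \setminus E}$ and to read off the local rules at each site.
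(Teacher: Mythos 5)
Your argument is correct. Its first half --- enumerating the countably many nonzero polynomials over the countable field, observing that each $\Ker P(\sigma_s)$ is closed, and applying the Baire category theorem to the Polish space $V^G$ to extract a single $P_0$ whose kernel contains a cylinder $W=\{p\}\times V^{G\setminus E}$ --- is exactly the paper's. The two proofs then genuinely diverge. The paper uses the pointwise hypothesis a second time: it chooses a basis $w_1,\dots,w_m$ of $V^E$, obtains annihilating polynomials $P_{n_i}$ for the configurations $\tilde w_i=w_i\times 0^{G\setminus E}$, and kills an arbitrary $x$ by decomposing it as $y+z$ with $y=p\times x\vert_{G\setminus E}\in W$ and $z\in\mathrm{span}(\tilde w_1,\dots,\tilde w_m)$, taking $P=\prod_i P_{n_i}$. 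You instead use the hypothesis only through the Baire step: from $\{0\}^E\times V^{G\setminus E}\subset\Ker P_0(\sigma_s)$ you read off the local rules of $P_0(\sigma_s)=\sigma_u$ to get $u(g)=0$ for all $g$ outside the finite set $EN^{-1}$, so that $\im P_0(\sigma_s)$ is a finite-dimensional $\sigma_s$-invariant subspace, and the classical Cayley--Hamilton theorem supplies the cofactor $R$ with $P=R\,P_0$. Your route is more structural --- it shows that a single nonzero polynomial annihilating one cylinder already forces the global property, and it bounds $\deg R$ by $\vert EN^{-1}\vert\cdot\dim V$ --- whereas the paper's product-of-pointwise-annihilators template is the one that transfers unchanged to Theorem~\ref{t:general-cayley-hamilton-sequence}, where the conclusion must be one of the prescribed polynomials $P_n$ (a family closed under passing to common multiples); your $R\cdot P_0$ need not be of that form. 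The only step worth spelling out is the one you flag yourself: that a $k$-linear combination of linear NUCA with finite memory is again a linear NUCA with finite memory (take the union of the memory sets and sum the local rules composed with the corresponding projections), so that \cite[Theorem~6.2]{phung-tcs} indeed lets you write $P_0(\sigma_s)=\sigma_u$ for some $u\in\LL(V^N,V)^G$ with $N$ finite.
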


\begin{proof}
Since the field $k$ is countable by hypothesis, so is  the polynomial rings $k[z]$. We can thus  enumerate  $k[z]= \{P_0 \colon n \in \N\}$ where $P_0=0$ is the trivial polynomial. Since $\sigma_s$ is continuous by \cite[Lemma~4.2]{phung-tcs}, we have for every $n \geq 1$ the following closed subset of $V^G$: 
\begin{align*}
    X_n = \Ker P_n(\sigma_s) =   \{ x \in V^G \colon P_n(\sigma_s)(x)=0 \}. 
\end{align*}
\par 
As $\sigma_s$ is pointwise Cayley-Hamilton, $V^G= \cup_{n \geq 1} X_n$ and thus the Baire category theorem implies that the interior of $X_{n_0}$ is nonempty for some $n_0 \geq 1$. Hence, there exist $E \subset G$ finite and $p \in V^E$ such that $W= \{p\}\times V^{G \setminus E} \in X_{n_0}$. Let $w_1, \dots, w_m$ be a basis of $V^{E}$. Then there exist $n_1, \dots , n_m \geq 1$ such that $P_{n_i}(\sigma_s)(\tilde{w}_i)=0$ for all $i=1, \dots, m$ where 
$\tilde{w}_i=w_i \times 0^{G \setminus E}$.  Consider the nonzero polynomial $P=\prod_{i=0}^n P_{n_i} \in k[z]$. Let $Q_i= \prod_{j  \neq i,  0 \leq j \leq m} P_{n_j} \in k[z]$ then observe for every $i=1, \dots, m$ that 
\begin{align*}
    P(\sigma_s)(w)= Q_i(\sigma_s) P_{n_i}(\sigma_s)(w_i)= Q_i(\sigma_s)(0) = 0. 
\end{align*}
\par 
 Let $x \in V^G$ and let $y=\{p\}\times x\vert_{G \setminus E} \in W$. Then for $z =x-y$, we have $z\vert_{G \setminus E}=0$ and we can find $a_1, \dots, a_m \in k$ such that $z =  a_1 \tilde{w}_1+ \dots + a_m \tilde{w}_m$. Hence, by linearity, we obtain: 
 \begin{align*}
     P(\sigma_s)(x) & = P(\sigma_s)(y) + P(\sigma_s)(z) \\
     & = Q_0(\sigma_s) P_{n_0}(\sigma_s)(y) + \sum_{i=1}^m a_i Q_i(\sigma_s)P_{n_i}(\sigma_s)(\tilde{w}_i) \\
     & = 0. 
 \end{align*} 
 \par 
 We conclude that $P(\sigma_s)=0$ and thus $\sigma_s$ is Cayley-Hamilton. Finally, the last statement results from Lemma~\ref{l:stable-cayley-hamilton-linear-nuca}. 
\end{proof}

\par 

The next theorem tells us that every Cayley-Hamilton linear NUCA over finite vector space alphabet  must be  eventually periodic, which greatly restricts the class of linear NUCA satisfying polynomial equations. 

\begin{theorem}
  \label{t:finite-cayley-hamilton-linear-ca-periodic} 
Let $G$ be a group and let $V$ be a finite vector space.   
Suppose that $\tau \colon V^G \to V^G$ is a pointwise Cayley-Hamilton linear NUCA. Then $\tau$ is eventually periodic. 
\end{theorem}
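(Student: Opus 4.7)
The plan is to first upgrade the pointwise hypothesis to a uniform identity using Theorem~\ref{t:pointwise-cayley-hamilton-linear-nuca}, and then extract eventual periodicity from the resulting polynomial by exploiting that the base field is finite. Since $V$ is a finite vector space, its underlying field $k$ is finite (hence countable) and $V$ is finite-dimensional over $k$, so the hypotheses of Theorem~\ref{t:pointwise-cayley-hamilton-linear-nuca} are met and we obtain a nonzero polynomial $P \in k[z]$ with $P(\tau) = 0$.

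Next, I would factor $P(z) = z^m Q(z)$ with $m \geq 0$ and $Q(0) \neq 0$, isolating the ``purely nilpotent'' factor $z^m$ from a factor $Q$ coprime to $z$. The condition $Q(0) \neq 0$ says exactly that the class of $z$ is a nonzerodivisor in the finite commutative ring $R := k[z]/(Q(z))$; in any finite commutative ring, every nonzerodivisor is a unit, so $z$ represents an element of the finite group $R^{\times}$ and therefore has finite multiplicative order, say $n \geq 1$. Equivalently, $Q(z) \mid z^n - 1$ in $k[z]$, and consequently $P(z) = z^m Q(z)$ divides $z^m(z^n - 1) = z^{m+n} - z^m$.

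It follows that $\tau^{m+n} - \tau^m$ is a $k[\tau]$-multiple of $P(\tau) = 0$, whence $\tau^{m+n} = \tau^m$ and $\tau$ is eventually periodic, as required. I do not foresee a substantial obstacle: the first step is a direct invocation of Theorem~\ref{t:pointwise-cayley-hamilton-linear-nuca}, and the second is a standard finite-ring argument based on units in $k[z]/(Q)$. The only mildly delicate point, if $G$ is not implicitly assumed countable in this statement, is verifying that the Baire-category step in the proof of Theorem~\ref{t:pointwise-cayley-hamilton-linear-nuca} still applies; this is immediate because $V^G$ is compact Hausdorff (hence Baire) as soon as $V$ is finite, regardless of the cardinality of $G$, while $k[z]$ remains countable.
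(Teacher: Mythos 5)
Your proposal is correct, and its first step --- invoking Theorem~\ref{t:pointwise-cayley-hamilton-linear-nuca} to upgrade the pointwise hypothesis to a single polynomial identity $P(\tau)=0$ --- is exactly what the paper does. Where you diverge is in how eventual periodicity is extracted from $P$. The paper observes that for each $x$ and $g$ the sequence $(\tau^k(x)(g))_{k\ge 0}$ is an $n$th-order linear recurrence over the finite field $k$ and cites a known periodicity result for such recurrences to conclude $\tau^{p+n-1}=\tau^{n-1}$. You instead prove the polynomial divisibility $P(z)=z^mQ(z)\mid z^{m+n}-z^m$ directly, via the standard fact that a nonzerodivisor in the finite commutative ring $k[z]/(Q)$ is a unit of finite multiplicative order. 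The two arguments are equivalent in mathematical content (the cited periodicity of linear recurrences is proved by essentially your computation), but yours is self-contained and makes the algebraic mechanism explicit, whereas the paper's is an external citation. Your closing remark about the Baire step is also a genuine value-add: the theorem as stated allows an arbitrary, possibly uncountable, group $G$, while Theorem~\ref{t:pointwise-cayley-hamilton-linear-nuca} is stated for countable $G$; the observation that $V^G$ is compact Hausdorff (hence Baire) whenever $V$ is finite is precisely what closes that gap, which the paper leaves implicit. One cosmetic point: the paper's Definition~\ref{d:pointwise-eventually-polynomial-lnuca-2} requires $m,n\ge 1$, so in the degenerate case $m=0$ (i.e.\ $P(0)\neq 0$) you should pass from $\tau^{n}=\Id$ to $\tau^{n+1}=\tau$ before declaring $\tau$ eventually periodic.
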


\begin{proof}
Let $k$ denote the base field of $V$. Note that $k$ is finite as $V$ is finite.  Since $\tau$ is pointwise Cayley-Hamilton, it is in fact Cayley-Hamilton by Theorem~\ref{t:pointwise-cayley-hamilton-linear-nuca}. Hence, there exists a polynomial $P(z) = z^n + a_{n-1}z^{n-1} + \dots + a_0 \in k[z]$ with $n \geq 1$ such that $P(\tau)=0$.   It follows that for every configuration $x \in  V^\Z$ and $y_k=\tau^k(x)$ where $k \geq 0$, we have:
\[
y_{k+n}+ a_{n-1}y_{k+n-1} + \dots + a_0y_k= 0 \quad \text{for all } k \geq 0. 
\]
\par 
Consequently, for every $g \in G$, the sequence have $(y_k(g))_{k \geq 0}$ is an $n$th-order linear recurrent sequence over $k$. Since the field $k$ is finite, it is well-known (see e.g. \cite{somer}) that there exists an integer $p\geq 1$ depending only on $k$ and $n$  such that $y_{k+p}(g)=y_k(g)$ for all 
$k \geq n-1$ and $g \in G$. 
\par 
Consequently,  $y_{k+p}= y_k$ for all $k \geq n-1$ and we thus conclude that 
$\tau^{p+n-1} (x)= \tau^{n-1}(x)$ for all $x \in V^G$. 
Thus, $\tau$ is eventually periodic and the proof is complete. 
\end{proof}

\subsection{Pointwise eventually periodic linear NUCA}
\par 
For linear NUCA with an arbitrary finite-dimensional vector space alphabet, it turns out that pointwise periodicity, resp. pointwise eventual periodicity, is also equivalent to periodicity, resp. eventual periodicity. 
%The idea of the proof of the following result is similar to the proof of Theorem~\ref{t:pointwise-uniform-nilpotency-linear-nuca}.

\begin{theorem}
\label{t:pointwise-uniform-eventually-periodic-linear-nuca} 
Let $M$ be a finite subset of a countable group $G$ and let $V$ be a finite-dimensional vector space.  
Let $s\in \LL(V^M,V)^G$ and suppose that $\sigma_s$ is pointwise eventually periodic. Then  $\sigma^{n+m}_s = \sigma_s^m$ for some integers $m,n \geq 1$. Moreover, we have $\sigma_t^{n+m}= \sigma_t^m$ for all $t \in \Sigma(s)$.
\end{theorem}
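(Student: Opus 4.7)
The plan is to adapt the proof of Theorem~\ref{t:pointwise-uniform-eventually-periodic-nuca}, substituting the finiteness of the alphabet by the finite-dimensionality of $V$ together with the linearity of $\sigma_s$.

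After enlarging $M$ so that $1_G \in M$, I would introduce for every $k, l \geq 1$ the linear subspace
\[
X_{k,l} = \ker(\sigma_s^{k+l} - \sigma_s^k) \subset V^G,
\]
which is closed by continuity of $\sigma_s^q$ for all $q$. Pointwise eventual periodicity gives $V^G = \bigcup_{k,l \geq 1} X_{k,l}$. Since $G$ is countable, $V^G$ with the prodiscrete topology is a complete metric space, hence Baire, so some $X_{k_0, l_0}$ has nonempty interior; being a linear subspace, it must contain a cylinder $W = \{0\}^E \times V^{G \setminus E}$ around $0$ for some finite $E \subset G$. Setting $\sigma_t = \sigma_s^{l_0}$ (an NUCA with memory $N = M^{\max(k_0,l_0)}$ by \cite[Theorem~6.2]{phung-tcs}) and $\Gamma = \sigma_s^{k_0}(V^G)$, I would then rerun the arguments around equations \eqref{e:pointwise-uniform-eventually-periodic-nuca=1-a}--\eqref{e:e:pointwise-uniform-eventually-periodic-nuca=2-b} of the preceding proof (none of which used finiteness of the alphabet) to obtain
\[
\sigma_t^k(z)\vert_{G \setminus EN^2} = z\vert_{G \setminus EN^2} \quad \text{for all } z \in \Gamma \text{ and } k \geq 0.
\]

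The key new step exploits the finite-dimensionality of $V$: the restriction $\Gamma_{EN^3} \subset V^{EN^3}$ is a finite-dimensional vector space. I would pick a basis $w_1, \dots, w_r$ of $\Gamma_{EN^3}$ together with lifts $\tilde w_i \in \Gamma$ satisfying $\tilde w_i\vert_{EN^3} = w_i$. Applying pointwise eventual periodicity to each $\tilde w_i$, there exist $k_i, l_i \geq 1$ with $\sigma_t^{k_i+l_i}(\tilde w_i) = \sigma_t^{k_i}(\tilde w_i)$. Setting $m = \max_i k_i$ and $n = \mathrm{lcm}(l_1,\dots,l_r)$, an easy iteration of each relation yields $\sigma_t^{m+n}(\tilde w_i) = \sigma_t^m(\tilde w_i)$ for every $i$, and by linearity $\sigma_t^{m+n}(\tilde z) = \sigma_t^m(\tilde z)$ for every $\tilde z$ in the span of $\{\tilde w_i\}$.

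To finish, given any $z \in \Gamma$, I would write $z\vert_{EN^3} = \sum_i a_i w_i$ and set $\tilde z = \sum_i a_i \tilde w_i \in \Gamma$, so that $z\vert_{EN^3} = \tilde z\vert_{EN^3}$. An induction on $k$, using that $N$ is a memory of $\sigma_t$ together with the identity $\sigma_t^k(y)\vert_{G \setminus EN^2} = y\vert_{G \setminus EN^2}$ for $y \in \Gamma$ (precisely as in the derivation of \eqref{e:pointwise-even-per-main-relat}), gives $\sigma_t^k(z)\vert_{EN^2} = \sigma_t^k(\tilde z)\vert_{EN^2}$ for all $k \geq 0$. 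Combined with the same identity on $G \setminus EN^2$, this yields $\sigma_t^{m+n}(z) = \sigma_t^m(z)$ for every $z \in \Gamma$, and therefore $\sigma_s^{l_0(m+n)+k_0} = \sigma_s^{l_0 m + k_0}$ on all of $V^G$, as required. The assertion about $t \in \Sigma(s)$ follows directly from Lemma~\ref{l:stable-periodic-nuca}. The principal obstacle compared with the finite-alphabet case is that $\Gamma_{EN^3}$ may be infinite, so one cannot take a finite maximum over all patterns; passing to a basis and invoking linearity of $\sigma_s$ is precisely what replaces the finiteness argument used there.
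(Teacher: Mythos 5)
Your argument is correct, but it follows a genuinely different route from the paper's. The paper does not adapt the combinatorial proof of Theorem~\ref{t:pointwise-uniform-eventually-periodic-nuca} at all: after the same Baire step producing a cylinder $W=\{p\}\times V^{G\setminus E}\subset X_{k_0,l_0}$, it takes a basis $v_1,\dots,v_m$ of the finite-dimensional space $V^E$, sets $w_i=v_i\times 0^{G\setminus E}$, and records that each $w_i$ is annihilated by $P_i(\sigma_s)$ with $P_i(z)=z^{k_i}(z^{l_i}-1)$ while $W$ is annihilated by $P_0(z)=z^{k_0}(z^{l_0}-1)$. Since all these polynomials divide $P(z)=z^K(z^L-1)$ with $K=\max k_i$ and $L=\prod l_i$, and since every $x\in V^G$ decomposes as $x=y+z$ with $y\in W$ and $z$ supported in $E$ (hence in the span of the $w_i$), linearity gives $P(\sigma_s)(x)=0$ directly, exactly as in the proof of Theorem~\ref{t:pointwise-cayley-hamilton-linear-nuca}. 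This is shorter and never mentions memory sets, images $\Gamma(k_0)$, or the local behaviour of $t(g)$; it is the pattern abstracted in Theorem~\ref{t:general-cayley-hamilton-sequence}. Your proof instead reruns the dynamical analysis of the finite-alphabet case (identity of $\sigma_t$ outside $EN^2$ on the image, agreement propagation from $EN^3$, etc.) and replaces the finite enumeration of patterns in $A^{EN^3}$ by a basis of $\Gamma_{EN^3}$ with chosen lifts; this is sound --- all the steps you borrow indeed use no finiteness of the alphabet, $\Gamma_{EN^3}$ is finite-dimensional, and Lemma~\ref{l:stable-periodic-nuca} covers the final assertion --- but it is considerably longer and re-proves locality facts that the algebraic decomposition makes unnecessary. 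What your version buys is independence from linearity of the ambient polynomial formalism in the middle of the argument (you only use linearity at the very end to pass from basis lifts to general configurations), whereas the paper's version buys brevity and a uniform template covering the nilpotent, periodic, eventually periodic, and Cayley--Hamilton cases at once.
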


\begin{proof}
    For integers $k, l \geq 1$, we define:
    \begin{align*}
X_{k,l} \coloneqq (\sigma_s^{k+l} - \sigma_s^k)^{-1}(0)=\Ker (\sigma_s^{k+l} -\sigma_k)  = \{ x \in V^G \colon \sigma_s^{k+l}(x) = \sigma_s^k(x)\}. 
    \end{align*}
\par 
$X_{k,l}$ is a closed subset of $V^G$ by the continuity of $\sigma_s^q$ for all $k \geq 1$ (cf.~\cite[Lemma~4.1]{phung-tcs}). Observe also that $X_{k,l} \subset X_{k,ql}$ for all $ k, l, q \geq 1$. 
Since $\sigma_s$ is pointwise eventually periodic by hypothesis, we have $V^G = \cup_{k, l \geq 1}^{\infty} X_{k,l}$. Hence, the Baire category theorem implies  that there exist $k_0, l_0 \geq 1$ such that the interior of $X_{m_0,n_0}$ is nonempty. Consequently, we can find a finite subset $E \subset G$ and a pattern $p \in V^E$ such that $W= \{p\} \times V^{G \setminus E} \subset X_{k_0, l_0}$. 
\par 
Let $v_1, \dots, v_m$ be a basis of the vector space $V^E$ where $m = \vert E \vert \dim V$. For every $i=1, \dots, m$, we define $w_i= v_i\times 0^{G \setminus E} \in V^G$. 
Since $\sigma_s$ is pointwise eventually periodic, there exist integers $k_i, l_i  \geq 1$ for $i=1, \dots, m$ such that $P_{i}(\sigma_s)(w_i)=0$ where $P_{i}(z)= z^{k_i}(z^{l_i}-1)$. 
Let us define $K= \max \{k_i\colon i=0, \dots, m\}$, $L=\prod_{i=0}^m l_i$, and $P(z)=z^{K} (z^{L}-1)$ then $P_i \vert P$ for all $i=0, \dots,m$. 
From this point, the same lines, \emph{mutatis mutandis}, of the proof of Theorem~\ref{t:pointwise-cayley-hamilton-linear-nuca} show that $P(\sigma_s)=0$. Thus $\sigma_s^{K+L}= \sigma_s^K$ and the last assertion follows from Lemma~\ref{l:stable-cayley-hamilton-linear-nuca}. The proof is complete. 
\end{proof}

For linear NUCA over an arbitrary finite-dimensional vector space alphabet, it turns out that pointwise periodicity is also equivalent to periodicity. 
%The idea of the proof of the following result is similar to the proof of Theorem~\ref{t:pointwise-uniform-nilpotency-linear-nuca}.

\begin{theorem}
\label{t:pointwise-uniform-periodic-linear-nuca} 
Let $M$ be a finite subset of a countable group $G$ and let $V$ be a finite-dimensional vector space.  
Let $s\in \LL(V^M,V)^G$ and suppose that $\sigma_s$ is pointwise periodic. Then $\sigma_s$ is periodic, i.e.,  $\sigma^n_s = \Id$ for some integer $n \geq 1$. Moreover, $\sigma_t^n= \Id$ for every $t \in \Sigma(s)$.  
\end{theorem}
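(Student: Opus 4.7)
The strategy is to bootstrap Theorem~\ref{t:pointwise-uniform-eventually-periodic-linear-nuca} by combining eventual periodicity with the bijectivity forced by pointwise periodicity. Since pointwise periodicity trivially implies pointwise eventual periodicity, I would first invoke Theorem~\ref{t:pointwise-uniform-eventually-periodic-linear-nuca} to produce integers $m, n \geq 1$ such that
\[
\sigma_s^{n+m} = \sigma_s^m.
\]
The remaining task is to promote this identity to $\sigma_s^n = \Id$, for which I only need to cancel the leading factor $\sigma_s^m$.

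To perform the cancellation, observe that pointwise periodicity forces $\sigma_s$ to be surjective: for every $x \in V^G$, choosing a period $p(x) \geq 1$ with $\sigma_s^{p(x)}(x) = x$ gives $x = \sigma_s(\sigma_s^{p(x)-1}(x))$. Hence $\sigma_s^m$ is also surjective, so for each $x \in V^G$ there exists $y \in V^G$ with $\sigma_s^m(y) = x$. Then
\[
\sigma_s^n(x) = \sigma_s^n(\sigma_s^m(y)) = \sigma_s^{n+m}(y) = \sigma_s^m(y) = x,
\]
so $\sigma_s^n = \Id$ as desired. (Equivalently, one can verify injectivity of $\sigma_s$ by applying the common period $\mathrm{lcm}(p(x),p(y))$ to any pair $x,y$ with $\sigma_s(x) = \sigma_s(y)$, and then cancel $\sigma_s^m$ on the left of $\sigma_s^{n+m} = \sigma_s^m$; this gives the same conclusion without invoking surjectivity.)

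Finally, for the stability statement, the identity $\sigma_s^n = \Id = \sigma_s^0$ is an equation of the form required by Lemma~\ref{l:stable-periodic-nuca}, and that lemma directly yields $\sigma_t^n = \sigma_t^0 = \Id$ for every $t \in \Sigma(s)$. The main subtlety here is the order in which pointwise periodicity is exploited: I do not try to upgrade pointwise periods to a uniform period by a Baire-category argument on the sets $\{x : \sigma_s^k(x)=x\}$ directly (which would require more care than in Theorem~\ref{t:pointwise-periodic-nuca}), but instead factor through the already-proven eventual periodicity and pay off the remaining gap with the elementary observation that pointwise periodicity implies bijectivity. Note also that finite-dimensionality of $V$ is used only indirectly, through its role in Theorem~\ref{t:pointwise-uniform-eventually-periodic-linear-nuca}.
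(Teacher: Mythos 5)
Your proof is correct, but it takes a genuinely different route from the paper. The paper reruns the Baire category template directly on the sets $X_k = \Ker(\sigma_s^k - \Id)$: it finds a cylinder $W = \{p\}\times V^{G\setminus E}$ inside some $X_{n_0}$, takes a basis $v_1,\dots,v_m$ of $V^E$, multiplies the individual periods of the $w_i = v_i \times 0^{G\setminus E}$ to get $n$ with $(z^{n_i}-1)\mid(z^n-1)$, and then uses the linear decomposition $x = y + z$ (with $y \in W$, $z$ supported on $E$) exactly as in the proof of Theorem~\ref{t:pointwise-cayley-hamilton-linear-nuca} to conclude $\sigma_s^n = \Id$. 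You instead factor through the already-established Theorem~\ref{t:pointwise-uniform-eventually-periodic-linear-nuca} to get $\sigma_s^{n+m}=\sigma_s^m$, and then cancel $\sigma_s^m$ using the elementary observation that pointwise periodicity forces surjectivity (or, in your parenthetical variant, injectivity via a common period); both cancellation arguments are valid. Your route is shorter and avoids duplicating the Baire/basis machinery, and the surjectivity observation is purely set-theoretic, so the same reduction would derive periodicity from eventual periodicity for arbitrary (non-linear) NUCA as well; what it gives up is the self-contained, uniform template the paper uses across the nilpotent, periodic, eventually periodic, and Cayley--Hamilton cases, and it inherits whatever constants come out of the eventual-periodicity proof rather than producing the period directly as a product of pointwise periods. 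For the stability claim, your appeal to Lemma~\ref{l:stable-periodic-nuca} with exponents $n$ and $0$ is fine (the lemma is stated for $m,n\geq 0$, and one assumes $1_G\in M$ up to enlarging $M$ so that $\Id$ has memory $M^n$); alternatively Lemma~\ref{l:stable-cayley-hamilton-linear-nuca} with $P(z)=z^n-1$ gives the same conclusion and is what the paper invokes.
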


\begin{proof}
    For every integer $k \geq 1$, let us define:
    \[
X_k \coloneqq (\sigma_s^k - \Id)^{-1}(0)=\Ker ( \sigma_s^k -\Id)  = \{ x \in V^G \colon \sigma_s^k(x) = x\}. 
    \]
\par 
As $\sigma_s$ is pointwise periodic by hypothesis, we have $V^G = \cup_{k=1}^{\infty} X_k$. Then the Baire category theorem implies that there exist $n_0 \geq 1$, a finite subset $E \subset G$, and a pattern $p \in V^E$ such that $X_{n_0}$ contains $W= \{p\} \times V^{G \setminus E} \subset V^G$. 
\par 
Let $v_1, \dots, v_m$ be a basis of the vector space $V^E$ where $m = \vert E \vert \dim V$. For every $i=1, \dots, m$, we define $w_i= v_i\times 0^{G \setminus E} \in V^G$. 
Since $\sigma_s$ is pointwise periodic, there exist integers $n_1, \dots, n_m \geq 1$ such that $P_i(\sigma_s)(w_i) = 0$ where $P_i(z)=z^{n_i}-1$ for $i=0,1, \dots, m$. 
Let us define $n=\prod_{i=0}^m n_i\geq 1$ and $P(z)=z^n-1$. Then $P_i(z) \vert P(z)$ for all $i=0, \dots, m$ and we obtain as in the proof of Theorem~\ref{t:pointwise-cayley-hamilton-linear-nuca} and Theorem~\ref{t:pointwise-uniform-eventually-periodic-linear-nuca}  that $\sigma_s^n =\Id$. The last statement follows from  Lemma~\ref{l:stable-cayley-hamilton-linear-nuca}. The proof is thus complete. 
\end{proof}

\subsection{Pointwise nilpotent linear NUCA}

Let $X$ be a vector space, we say that a linear endomorphism $f \in \End(X)$ is \emph{pointwise nilpotent} if for every point $x \in X$, there exists an integer $n \geq 1$ such that $f^n(x)=0$. A linear endomorphism $f\in  \End(X)$ is said to be \emph{nilpotent} if there exists an integer $n \geq 1$ such that $f^n=0$.

\begin{theorem}
\label{t:pointwise-uniform-nilpotency-linear-nuca} 
Let $M $ be a finite subset of a countable group $G$ and let $V$ be a finite-dimensional vector space.  
Let $s\in \LL(V^M,V)^G$ and suppose that $\sigma_s$ is pointwise nilpotent. Then $\sigma_s$ is nilpotent, i.e., there exists an integer $n \geq 1$ such that $\sigma^n_s = 0$. Moreover, for every $t \in \Sigma(s)$, we have $\sigma_t^n= 0$. 
\end{theorem}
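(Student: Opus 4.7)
The plan is to follow almost verbatim the Baire category pattern used for Theorems~\ref{t:pointwise-cayley-hamilton-linear-nuca}, \ref{t:pointwise-uniform-eventually-periodic-linear-nuca}, and \ref{t:pointwise-uniform-periodic-linear-nuca}, specialized to the polynomial $P(z)=z^n$. The simplification, relative to the Cayley-Hamilton case, is that the kernels $\Ker \sigma_s^k$ form a monotone chain, so a common exponent can be obtained as a maximum rather than as a product.

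First I would define $X_k \coloneqq \Ker \sigma_s^k = \{x \in V^G : \sigma_s^k(x)=0\}$ for each $k \geq 1$. By continuity of $\sigma_s^k$ (cf.~\cite[Lemma~4.1]{phung-tcs}) each $X_k$ is a closed linear subspace of $V^G$, and $X_k \subset X_{k+1}$ since $\sigma_s$ is linear. Pointwise nilpotency yields $V^G = \bigcup_{k \geq 1} X_k$, and the Baire category theorem then provides an integer $n_0 \geq 1$, a finite subset $E \subset G$, and a pattern $p \in V^E$ with $W = \{p\} \times V^{G \setminus E} \subset X_{n_0}$.

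Next, mimicking the proof of Theorem~\ref{t:pointwise-uniform-periodic-linear-nuca}, I would pick a basis $v_1, \dots, v_m$ of the finite-dimensional vector space $V^E$ (so $m = |E|\dim V$) and set $w_i = v_i \times 0^{G \setminus E}$. Pointwise nilpotency gives integers $n_i \geq 1$ with $\sigma_s^{n_i}(w_i)=0$; set $n \coloneqq \max(n_0,n_1,\dots,n_m)$. The nesting $X_k \subset X_{k+1}$ implies $\sigma_s^n(w_i)=0$ for every $i$ and $\sigma_s^n(y)=0$ for every $y \in W$. For an arbitrary $x \in V^G$ I would then set $y = p \times x|_{G \setminus E} \in W$ and $z = x - y$, note that $z$ is supported on $E$ and hence $z = \sum_{i=1}^m a_i w_i$ for some scalars $a_i$, and conclude by linearity that
\[
\sigma_s^n(x) = \sigma_s^n(y) + \sum_{i=1}^m a_i \sigma_s^n(w_i) = 0.
\]
This gives $\sigma_s^n = 0$, and the final assertion on $t \in \Sigma(s)$ follows from Lemma~\ref{l:stable-cayley-hamilton-linear-nuca} applied to $P(z) = z^n$ (equivalently, from Lemma~\ref{l:stable-nilpotency-nuca} with constant configuration $0^G$).

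I do not expect any serious obstacle: every ingredient has already been deployed in the preceding theorems. The only minor point worth checking is that Baire category is available in the ambient space $V^G$; this is the same implicit setup as in Theorems~\ref{t:pointwise-uniform-eventually-periodic-linear-nuca} and \ref{t:pointwise-uniform-periodic-linear-nuca}, so no new hypothesis is needed.
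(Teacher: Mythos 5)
Your proposal is correct and follows essentially the same route as the paper's proof: the same closed kernels $X_k=\Ker\sigma_s^k$, the same Baire category step producing a cylinder $W=\{p\}\times V^{G\setminus E}\subset X_{n_0}$, the same basis $w_i=v_i\times 0^{G\setminus E}$ of the finitely supported complement with $n=\max\{n_0,n_1,\dots,n_m\}$, and the same appeal to Lemma~\ref{l:stable-cayley-hamilton-linear-nuca} for the statement about $t\in\Sigma(s)$. The only difference is cosmetic: you write out the decomposition $x=y+z$ explicitly, whereas the paper delegates that step to the proof of Theorem~\ref{t:pointwise-cayley-hamilton-linear-nuca}.
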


\begin{proof}
    For every $k \geq 1$, let us define:
    \[
X_k \coloneqq (\sigma_s^k)^{-1}(0)=\Ker \sigma_s^k  = \{ x \in V^G \colon \sigma_s^k(x) = 0\}. 
    \]
\par 
Since $\sigma_s$ is pointwise nilpotent by hypothesis, $V^G = \cup_{k=1}^{\infty} X_k$.  
Then as in the proof of Theorem~\ref{t:pointwise-uniform-nilpotency-nuca}, we can find a finite subset $E \subset G$ and a pattern $p \in V^E$ such that $X_{n_0}$, and thus every $X_k$ where $k \geq n_0$, contains the cylinder  $W= \{p\} \times V^{G \setminus E} \subset V^G$. 
\par 
Let us choose a basis $v_1, \dots, v_m$ of the finite-dimensional vector space $V^E$ where $m = \vert E \vert \dim V$. For every $i=1, \dots, m$, let $w_i= v_i\times 0^{G \setminus E} \in V^G$. 
Since $\sigma_s$ is pointwise nilpotent, there exist integers $n_1, \dots, n_m \geq 1$ such that $P_i(\sigma_s)(w_i) =0$ where $P_i(z)=z^{n_i}$ for $i=0,1, \dots, m$. Note that $P_i(\sigma_s)(w) =0$ for all $w \in W$. 
Let $P(z)=z^n$ where $n= \max \{n_0, n_1, \dots, n_m\} \geq 1$ then we deduce as in the proof of Theorem~\ref{t:pointwise-cayley-hamilton-linear-nuca} that $P(\sigma_s)=0$. Consequently, $\sigma_s^n=0$. The last statement is an immediate consequence of  Lemma~\ref{l:stable-cayley-hamilton-linear-nuca}.  
\end{proof}

\subsection{A generalization}

The above proofs of Theorem~\ref{t:pointwise-cayley-hamilton-linear-nuca} and Theorem~\ref{t:pointwise-uniform-eventually-periodic-linear-nuca}, Theorem~\ref{t:pointwise-uniform-periodic-linear-nuca}, and Theorem~\ref{t:pointwise-uniform-nilpotency-linear-nuca} can be adapted in a straightforward manner to obtain the following general result: 

\begin{theorem}
    \label{t:general-cayley-hamilton-sequence}
Let $M$ be a finite subset of a countable group $G$ and let $V$ be a finite-dimensional $k$-vector space.  Let $(P_n)_{n\geq 0}$ be a sequence of nonzero polynomials in $k[z]$ such that for every finite subset $S \subset \N$, there exists $m \geq 0$ such that $P_n \vert P_m$ for all $n \in S$. 
Let $s\in \LL(V^M,V)^G$ and suppose that for every $x \in V^M$, we have $P_n(\sigma_s)(x)=0$ for some $n \geq 0$. Then there exists $n \geq 0$ such that $P_n(\sigma_s) = 0$. Moreover, $P_n(\sigma_t)= 0$ for all $t \in \Sigma(s)$.  \qed 
\end{theorem}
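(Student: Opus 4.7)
The plan is to follow the Baire-category template established in Theorems~\ref{t:pointwise-cayley-hamilton-linear-nuca}, \ref{t:pointwise-uniform-eventually-periodic-linear-nuca}, \ref{t:pointwise-uniform-periodic-linear-nuca}, and \ref{t:pointwise-uniform-nilpotency-linear-nuca}, and then to exploit the divisibility hypothesis on $(P_n)$ to collapse the finitely many indices produced by Baire into a single one. For each $n \geq 0$ set
\[
X_n \coloneqq \Ker P_n(\sigma_s) = \{x \in V^G : P_n(\sigma_s)(x) = 0\}.
\]
Since $\sigma_s$ is continuous by \cite[Lemma~4.1]{phung-tcs}, so is each $P_n(\sigma_s)$, and thus every $X_n$ is closed. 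The pointwise hypothesis (quantified over $x \in V^G$, as consistency with the type of $\sigma_s$ requires) yields $V^G = \bigcup_{n \geq 0} X_n$. Because $G$ is countable, $V^G$ with the prodiscrete topology is completely metrizable, hence a Baire space, so some $X_{n_0}$ has nonempty interior and I can fix a finite $E \subset G$ and a pattern $p \in V^E$ with $W \coloneqq \{p\} \times V^{G \setminus E} \subset X_{n_0}$.

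Next, I pick a basis $v_1, \dots, v_m$ of the finite-dimensional space $V^E$ (with $m = \vert E \vert \dim V$) and set $w_i = v_i \times 0^{G \setminus E} \in V^G$. The pointwise hypothesis furnishes indices $n_1, \dots, n_m \geq 0$ with $P_{n_i}(\sigma_s)(w_i) = 0$. Applying the assumed common-multiple property of $(P_n)$ to the finite set $S = \{n_0, n_1, \dots, n_m\}$, I obtain $N \geq 0$ such that $P_{n_i} \mid P_N$ for every $i = 0, 1, \dots, m$; consequently $P_N(\sigma_s)$ vanishes on every $w_i$ and on every element of $W$.

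To finish, given an arbitrary $x \in V^G$ I decompose $x = y + z$ with $y = p \times x\vert_{G \setminus E} \in W$ and $z = (x\vert_E - p) \times 0^{G \setminus E}$, write $z = a_1 w_1 + \dots + a_m w_m$ for scalars $a_i \in k$, and read off by linearity that $P_N(\sigma_s)(x) = P_N(\sigma_s)(y) + \sum_{i=1}^{m} a_i P_N(\sigma_s)(w_i) = 0$. Hence $P_N(\sigma_s) = 0$, and the final clause about $t \in \Sigma(s)$ is then immediate from Lemma~\ref{l:stable-cayley-hamilton-linear-nuca}. No genuine obstacle is expected; the only non-automatic ingredient is the bookkeeping that packages $n_0$ together with the $n_i$ coming from the basis of $V^E$ into one index $N$, which is exactly what the hypothesis on $(P_n)$ was crafted to supply.
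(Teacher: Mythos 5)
Your proposal is correct and follows exactly the route the paper intends: the paper gives no separate proof for this theorem, stating only that it is a straightforward adaptation of the Baire-category arguments of Theorems~\ref{t:pointwise-cayley-hamilton-linear-nuca}--\ref{t:pointwise-uniform-nilpotency-linear-nuca}, and your write-up is precisely that adaptation (including the correct reading of the hypothesis as quantified over $x \in V^G$ and the use of the divisibility assumption in place of taking products of polynomials). No issues.
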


\section{Some decidable properties of linear and group NUCA} 
\label{s:}
In this section, we shall show that the nilpotency, the periodicity, the eventual periodicity, the Cayley Hamilton property, and the injectivity of linear NUCA, and more generally of group NUCA, which are local perturbations of linear CA are decidable properties (cf. Theorem~\ref{t:decidable-lnuca-c-nilpotent}, Theorem~\ref{t:decidable-group-nuca-c-nilpotent},  Theorem~\ref{t:decidable-lnuca-c-periodic}, Corollary~\ref{c:decidable-group-nuca-c-cayley-hamilton}). Some generalizations to the case of sparse global perturbations of linear CA will be obtained in Section~\ref{s:}. 

\subsection{Nilpotency problem}

We first establish the following decidability result for the nilpotency of local perturbations of linear CA. 

\begin{theorem}
\label{t:decidable-lnuca-c-nilpotent}
    Let $G= \Z^d$ for some integer $d \geq 1$ and let $V$ be a finite vector space. Let $M \subset G$ be a finite subset and let $s \in \LL(V^M,V)^G$ be asymptotic to a constant configuration. Then it is decidable whether $\sigma_s$ is nilpotent. 
\end{theorem}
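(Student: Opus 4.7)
The plan is to decompose the decision problem into a test for the bulk dynamics, followed by a finite computation that absorbs the perturbation. Write $s\vert_{G \setminus E} \equiv c_0 \in \LL(V^M,V)$ for some finite $E \subset G$, and let $c \in \LL(V^M,V)^G$ denote the constant configuration with value $c_0$, so that $\sigma_c$ is a linear CA.

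The first reduction exploits the fact that $c \in \Sigma(s)$: since $(gs)(h) = s(g^{-1}h)$ differs from $c_0$ only on the finite set $gE$, and $G = \Z^d$, for every finite window $F \subset G$ one can choose $g$ with $gE \cap F = \varnothing$, so that $gs$ agrees with $c$ on $F$. Lemma~\ref{l:stable-nilpotency-nuca} then implies that nilpotency of $\sigma_s$ with index $n$ forces $\sigma_c^n = 0$. Consequently, one first tests whether the linear CA $\sigma_c$ is nilpotent, which is decidable in any dimension by the classical theorem for linear CA over $\Z^d$. If the answer is no, output NO; otherwise one can effectively produce an explicit $N \geq 1$ with $\sigma_c^N = 0$.

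The heart of the argument is a support-confinement observation. Enlarging $M$ to contain $1_G$, the value $\sigma_s^N(x)(g)$ is determined by $x\vert_{gM^N}$ together with the rules $s(h)$ for $h \in gM^{N-1}$ (as seen by iterating the local map formula of Section~\ref{s:induced-local-map} a total of $N$ times). Whenever $gM^{N-1} \cap E = \varnothing$, each such $s(h)$ equals $c_0$, so the same computation produces $\sigma_c^N(x)(g) = 0$. Setting $F := E\,(M^{N-1})^{-1}$, which is a finite and explicitly computable subset of $G$, this proves $\sigma_s^N(V^G) \subset V_F$, where $V_F \subset V^G$ denotes the finite-dimensional subspace of configurations supported in $F$.

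The problem is thereby reduced to finite linear algebra. The restriction $T := \sigma_s^N\vert_{V_F} \colon V_F \to V_F$ is a well-defined linear endomorphism of the finite vector space $V_F$, and $\sigma_s$ is nilpotent if and only if $T$ is nilpotent: the forward direction is immediate, while the reverse uses that $\sigma_s^N(x) \in V_F$ for every $x \in V^G$, so that $\sigma_s^{(j+1)N}(x) = T^j(\sigma_s^N(x))$. Nilpotency of $T$ is then decided by checking whether $T^{\dim V_F} = 0$. I expect the main obstacle to be the confinement step, namely a careful verification that linearity of each local rule propagates zeros through $N$ iterations, so that $\sigma_s^N(x)(g)$ can be nonzero only when the $N$-step local cone at $g$ meets the perturbation region $E$. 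Once this is in place, the algorithm is a straightforward concatenation of the linear CA nilpotency procedure with elementary finite-dimensional linear algebra.
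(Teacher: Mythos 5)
Your proposal is correct and follows essentially the same route as the paper: first decide nilpotency of the bulk linear CA $\sigma_c$ (via the Béaur--Kari decidability result, the inclusion $c \in \Sigma(s)$, and Lemma~\ref{l:stable-nilpotency-nuca}), then effectively compute $N$ with $\sigma_c^N=0$ and reduce everything to the nilpotency of a linear endomorphism of a finite-dimensional space of configurations supported near the perturbation. The only cosmetic difference is that you restrict $\sigma_s^N$ to the subspace $V_F$ with $F = E(M^{N-1})^{-1}$, whereas the paper works with $W=\im f^+_{EN,\, t\vert_{EN}} \subset V^{EN}$ and the endomorphism $\varphi$ it induces; the two confinement arguments are equivalent.
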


\begin{proof}
Up to enlarging $M$, we can suppose without loss of generality that $1_G \in M$ and $M=M^{-1}$. 
Since $\sigma_s \in \mathrm{LNUCA}_c(G,V)$  by hypothesis, there exist a constant configuration $c \in \LL(V^M,V)^G$ and a finite subset $E \subset G$ such that $s\vert_{G \setminus E} = c\vert_{G \setminus E}$. We claim that if $\sigma_s$ is nilpotent then so is $\sigma_c$. Indeed, suppose that $\sigma_s$ is nilpotent, then there exists an integer $n \geq 1$  such that 
    $\sigma_s^n(x)=0$ for all $x \in V^G$. By \cite[Theorem~6.2]{phung-tcs} (see also \cite[Theorem~A]{phung-twisted-group-ring}), there exists $t \in T^G$, where $T=\LL({V^N}, V)^G$  with $N=M^{n}$, such that $\sigma_t= \sigma_s^{n}$. Note that $\sigma_t \in \mathrm{LNUCA}_c(G,V)$ with finite memory $N=M^n$. Similarly, there exists a constant configuration $c' \in T^G$ such that $\sigma_c^n= \sigma_{c'}$. 
    Since $s\vert_{G \setminus E} = c\vert_{G \setminus E}$, we deduce that $c'(g) = t(g)$ for all $ g \in G \setminus EN$. Since $G= \Z^d$ is infinite, there exist $g_0 \in G \setminus EN$ and thus $c'(g) = c'(g_0)=t(g_0)$ for all $g \in G$ as $c'$ is constant. However, as $\sigma_t= \sigma_s^n=0$, the configuration $t$ is constant configuration $0^G$ and thus so is $c'$. Consequently, $\sigma_{c}^n= \sigma_{c'}=0$ and $\sigma_c$ is also nilpotent. The claim is thus proved. 
    \par 
    Now suppose that $\sigma_c$ is nilpotent. Hence, $\sigma_c^n=0$ for all large enough $n \geq n_0$ where $n_0 \geq 1$ is a fixed integer. Then as in the above paragraph, we can find $t \in T^G$, where $T=\LL({V^N}, V)^G$ and $N=M^{n_0}$, such that $\sigma_t= \sigma_s^{n_0}$. Since $\sigma_c^n=0$, we deduce that $t(g)=0$ for all $g \in G \setminus EN$.  
    \par 
    Consider the finite vector space  $W=\sigma_t(V^G)_{EN} = \im f^+_{EN, t\vert_{EN}} \subset V^{EN}$ and let $\varphi \colon W \to W$ be the  linear endomorphism defined as follows. For every $w \in W$, let $\tilde{w} = w \times 0^{G \setminus EN} \in V^G$ and we set: 
    \begin{equation}
    \label{e:decide-nil-lnuca-1}
         \varphi(w) = \sigma_t(\tilde{w})\vert_{EN}=f^+_{EN, t\vert_{EN}}(\tilde{w}\vert_{EN^2}) \in W.
    \end{equation}
    \par 
The map $\varphi$ is clearly linear as  $\sigma_t$ is linear. Moreover, since $t(g)=0$ for all $g \in G \setminus EN$,  we find for every integer $k \geq 0$ and every $x \in V^G$ that:
\begin{equation*}
\sigma_t^{k+1}(x)=\varphi^k(\sigma_t(x)\vert_{EN})\times 0^{G\setminus EN}. 
\end{equation*}
\par 
Consequently, the NUCA $\sigma_t$ is nilpotent if and only if $\varphi$ is nilpotent. To summarize, we infer from  Lemma~\ref{l:nilpotency-tau-tau-k}  that whenever $\sigma_c$ is nilpotent, the NCUA $\sigma_s$ is nilpotent if and only if so is $\varphi$. 
    \par
    With the above necessary and sufficient conditions for the nilpotency of $\sigma_s$ described above, we can derive an algorithm which decides whether $\sigma_s$ is nilpotent as follows. 
    Since $\sigma_c$ is a linear CA and $V$ is finite, \cite[Theorem~22]{kari-beaur} implies that we can decide whether $\sigma_c$ is nilpotent.  If $\sigma_c$ is not nilpotent, we conclude that $\sigma_s$ is not nilpotent. In the case $\sigma_c$ is nilpotent, we can easily compute an integer $n_0 \geq$ such that $\sigma_c^{n_0}=0$. We then compute $N=M^{n_0}$ and $t \in T^G$, where $T= \LL(V^N,V)$, such that $\sigma_t=\sigma_s^{n_0}$. Next we check the nilpotency of $\varphi \colon W \to W$ defined as above by computing $\varphi^{\dim W}$. If $\varphi^{\dim W}=0$ then we conclude that $\sigma_s$ is nilpotent. Otherwise, $\sigma_s$ is not nilpotent. The proof is thus complete. 
\end{proof}

\par 
By a similar argument, we obtain the following result which says that the nilpotency of group NUCA with finite memory and asymptotically constant configuration of local defining maps over the universe $\Z^d$ and a finite group alphabet is decidable. 

\begin{theorem}
\label{t:decidable-group-nuca-c-nilpotent}
    Let $G= \Z^d$ for some integer $d \geq 1$ and let $V$ be a finite group. Let $M \subset G$ be a finite subset and let $s \in \mathrm{Hom}_{grp}(V^M,V)^G$ be asymptotic to a constant configuration. Then it is decidable whether $\sigma_s$ is nilpotent. 
\end{theorem}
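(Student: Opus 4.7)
The plan is to mirror the proof of Theorem~\ref{t:decidable-lnuca-c-nilpotent}, replacing the linear structure of the local defining maps by the group-homomorphism structure. The key preliminary observation is that since each $s(g) \in \mathrm{Hom}_{grp}(V^M, V)$ sends the identity to the identity, $\sigma_s$ fixes the identity configuration $e^G$, so the constant configuration witnessing nilpotency in Definition~\ref{d:pointwise-nil-nuca} is forced to be $e^G$ (as it must equal $\sigma_s^n(e^G) = e^G$). The same applies to the background CA $\sigma_c$, where $c \in \mathrm{Hom}_{grp}(V^M,V)^G$ is a constant configuration with $s|_{G \setminus E} = c|_{G \setminus E}$ for some finite $E \subset G$.

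My first step would be to show that nilpotency of $\sigma_s$ forces nilpotency of $\sigma_c$. Exactly as in the linear case, if $\sigma_s^n(x) = e^G$ for all $x$, then by \cite[Theorem~6.2]{phung-tcs} we may write $\sigma_s^n = \sigma_t$ with $t \in \mathrm{Hom}_{grp}(V^N, V)^G$, $N = M^n$, and $\sigma_c^n = \sigma_{c'}$ for some constant $c'$. Since $s$ agrees with $c$ outside $E$, $t(g) = c'(g)$ for all $g \in G \setminus EN$; picking any such $g_0$ (which exists because $\Z^d$ is infinite) and using the constancy of $c'$ forces $c' \equiv 0$, so $\sigma_c^n = e^G$ too.

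Assuming $\sigma_c$ is nilpotent with $\sigma_c^{n_0} = e^G$, the second step is to reduce nilpotency of $\sigma_s$ to that of a self-map on a finite set. Set $\sigma_t = \sigma_s^{n_0}$ with memory $N = M^{n_0}$; then $t(g) = 0$ (the trivial homomorphism $V^N \to V$) for all $g \in G \setminus EN$, so iterates of $\sigma_t$ preserve the value $e$ outside $EN$ once it is set there. Form the finite group $W = \im f^+_{EN,\, t|_{EN}} \subset V^{EN}$ and the group endomorphism $\varphi \colon W \to W$ given by $\varphi(w) = \sigma_t(w \times e^{G \setminus EN})|_{EN}$. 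An immediate induction yields $\sigma_t^{k+1}(x) = \varphi^k(\sigma_t(x)|_{EN}) \times e^{G \setminus EN}$ for all $k \geq 0$ and $x \in V^G$; combined with Lemma~\ref{l:nilpotency-tau-tau-k}, this shows that $\sigma_s$ is nilpotent if and only if $\varphi^m$ is the trivial endomorphism for some $m$, equivalently $\varphi^{|W|} \equiv e^{EN}$.

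The resulting decision procedure is: first decide whether $\sigma_c$ is nilpotent; if not, output that $\sigma_s$ is not nilpotent; if so, effectively compute $n_0$, construct the finite group $W$ and the endomorphism $\varphi$, and test whether $\varphi^{|W|}$ is the trivial homomorphism. The main obstacle is the first step: we require the decidability of nilpotency for group CA over $\Z^d$ with a finite (not necessarily abelian) group alphabet, which is the group-theoretic analogue of \cite[Theorem~22]{kari-beaur}. Once this decidability is in hand --- either via a direct argument adapted from the linear setting or by invoking the corresponding result for group CA in the literature --- the remaining reduction to the finite dynamical system $(W, \varphi)$ is routine and yields an effective algorithm.
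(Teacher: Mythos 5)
Your proposal is correct and follows essentially the same route as the paper: the paper's proof is precisely ``the same, \emph{mutatis mutandis}, as Theorem~\ref{t:decidable-lnuca-c-nilpotent},'' reducing to the nilpotency of the finite group endomorphism $\varphi \colon W \to W$, which it tests by checking $\varphi^{m}(w)=e$ for $m=\vert V\vert^{\vert V\vert}$ (your bound $\vert W\vert$ works equally well). The ``main obstacle'' you flag --- deciding nilpotency of the background group CA $\sigma_c$ --- is handled in the paper exactly as you suggest, by invoking the corresponding decidability result for group cellular automata from \cite{kari-beaur}.
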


\begin{proof}
    The proof is the same, \emph{mutatis mutandis}, as the proof of Theorem~\ref{t:decidable-lnuca-c-nilpotent}. We only remark here that in order to check whether the group endomorphism $\varphi \colon W \to W$ is nilpotent, we can, for example,  simply check whether $\varphi^{m}(w)=e$ for all $w \in W$ where $e$ is the neutral element of $W$ and  $m = \vert V \vert^{\vert V \vert} \geq \vert \End_{grp}(V)\vert$ where   $\End_{grp}(V)$ denotes the monoid consisting of group endomorphisms of $V$.  
\end{proof}

\subsection{Periodicity and eventual periodicity problems}

By using the same strategy as in the proof of Theorem~\ref{t:decidable-lnuca-c-nilpotent}, we obtain the following decidability result for the periodicity and eventual periodicity  problem of local perturbations of linear CA.  
\begin{theorem}
\label{t:decidable-lnuca-c-periodic}
 Let $G= \Z^d$ for some integer $d \geq 1$ and let $V$ be a finite vector space. Let $M \subset G$ be a finite subset and let $s \in \LL(V^M,V)^G$ be asymptotic to a constant configuration. Then it is decidable whether $\sigma_s$ is periodic, resp.   eventually periodic. 
\end{theorem}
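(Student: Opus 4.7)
The plan is to follow the template of Theorem~\ref{t:decidable-lnuca-c-nilpotent}: first show that periodicity (resp.\ eventual periodicity) of $\sigma_s$ forces the same property of $\sigma_c$ with the same parameters, then invoke Kari--Beaur \cite{kari-beaur} to decide that property for the CA, and finally reduce the remaining question for $\sigma_s$ to a finite-dimensional linear-algebra problem. The two cases diverge only in the last step. For the necessity direction, I would express both sides of $\sigma_s^n = \Id$ (resp.\ $\sigma_s^{n+k} = \sigma_s^n$) as NUCAs with a common memory $N$ via \cite[Theorem~6.2]{phung-tcs}; their local defining maps must agree pointwise, and at any $g \in G \setminus EN$ (which exists since $G = \Z^d$ is infinite) these maps equal the constant local defining maps of $\sigma_c^n$ and $\Id$ (resp.\ of $\sigma_c^n$ and $\sigma_c^{n+k}$), forcing $\sigma_c^n = \Id$ (resp.\ $\sigma_c^{n+k} = \sigma_c^n$). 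Kari--Beaur then decides whether $\sigma_c$ has the property and, if so, produces witnesses $n_0$ (resp.\ $n_0, k_0$).

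For the periodic case, I would take $\sigma_t = \sigma_s^{n_0}$ with memory $N = M^{n_0}$. Since $\sigma_c^{n_0} = \Id$, each $t(g)$ with $g \in G \setminus EN$ is the projection $V^N \to V$, $z \mapsto z(1_G)$, and hence $\sigma_t(x)\vert_{G \setminus EN} = x\vert_{G \setminus EN}$ for every $x \in V^G$. Decomposing $x$ as $(a, b, c)$ with $a = x\vert_{EN}$, $b = x\vert_{EN^2 \setminus EN}$, $c = x\vert_{G \setminus EN^2}$, linearity yields $\sigma_t(x)\vert_{EN} = L(a) + R(b)$ for effectively computable linear maps $L \colon V^{EN} \to V^{EN}$ and $R \colon V^{EN^2 \setminus EN} \to V^{EN}$, and an induction gives $\sigma_t^k(x) = (L^k(a) + (\sum_{i=0}^{k-1} L^i) R(b),\, b,\, c)$. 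Thus $\sigma_t^k = \Id$ iff $L^k = \Id$ and $(\sum_{i=0}^{k-1} L^i) R = 0$. The first condition forces $L$ to be invertible, and given invertibility, the second holds for $k = p \cdot \mathrm{ord}(L)$ because $p \cdot (\sum_{i=0}^{\mathrm{ord}(L)-1} L^i) = 0$ in characteristic $p$. Therefore $\sigma_s$ is periodic iff $L$ is invertible --- a finite computation.

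For the eventually periodic case, I would argue that $\sigma_c$ eventually periodic already forces $\sigma_s$ eventually periodic, so the algorithm reduces to Kari--Beaur on $\sigma_c$. Set $P(z) = z^{n_0+k_0} - z^{n_0}$, so $P(\sigma_c) = 0$. The common-memory argument shows $(P(\sigma_s)(x))(g) = 0$ for all $g \in G \setminus EN$, so the subspace $W := \im P(\sigma_s) \subset V^G$ is contained in the finite set of configurations supported on $EN$. Since $\sigma_s$ commutes with $P(\sigma_s)$, the finite set $W$ is $\sigma_s$-invariant; the iterates of the self-map $\sigma_s\vert_W \colon W \to W$ of a finite set are automatically eventually periodic, giving $\sigma_s^{m+k}\vert_W = \sigma_s^m\vert_W$ for some $m, k \geq 1$. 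Composing with $P(\sigma_s)$ yields $(QP)(\sigma_s) = 0$ for the nonzero polynomial $Q(z) = z^m(z^k - 1)$, so $\sigma_s$ is Cayley--Hamilton and hence eventually periodic by Theorem~\ref{t:finite-cayley-hamilton-linear-ca-periodic}.

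The main obstacle is the finite-dimensional reduction for periodicity: producing $L$ concretely from the NUCA data and justifying that the auxiliary condition on $R$ is automatic in characteristic $p$. For eventual periodicity, the crux is the structural observation that the annihilating polynomial $P$ for $\sigma_c$ gives an operator $P(\sigma_s)$ with finite-dimensional image, after which Theorem~\ref{t:finite-cayley-hamilton-linear-ca-periodic} carries the load.
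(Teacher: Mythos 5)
Your proposal is correct, and it follows the paper's high-level strategy (show the property passes from $\sigma_s$ to $\sigma_c$ via the common-memory/local-defining-map comparison outside $EN$, invoke Kari--Beaur for $\sigma_c$, then settle the residual question by a finite computation), but the final reductions are genuinely different from the paper's. The paper packages the perturbed behaviour into a single endomorphism $\varphi$ of the finite space $W=V^{EN^2}$ (resp.\ $\sigma_t(V^G)_{EN^2}$) acting by $\varphi(w)=f^+_{EN,t\vert_{EN}}(w)\times w\vert_{EN^2\setminus EN}$, proves $\sigma_t^k(x)=\varphi^k(x\vert_{EN^2})\times x\vert_{G\setminus EN^2}$, and then brute-forces periodicity resp.\ eventual periodicity of $\varphi$ inside the finite monoid $\End(W)$. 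Your block decomposition $\varphi=(a,b)\mapsto(L(a)+R(b),b)$ refines this: the identity $\sum_{i=0}^{pd-1}L^i=p\sum_{i=0}^{d-1}L^i=0$ in characteristic $p$ turns the periodicity test into the single condition ``$L$ invertible,'' which is sharper than iterating $\varphi$. For eventual periodicity, your route through $P(\sigma_c)=0$, the finite $\sigma_s$-invariant image of $P(\sigma_s)$, and Theorem~\ref{t:finite-cayley-hamilton-linear-ca-periodic} makes explicit something the paper leaves implicit (any $\varphi\in\End(W)$ with $W$ finite is automatically eventually periodic): namely that for local perturbations, eventual periodicity of $\sigma_s$ is outright \emph{equivalent} to that of $\sigma_c$, so no residual finite check is needed at all. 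Both of your reductions are valid and effectively computable; the only hypotheses you should state explicitly (as the paper does) are the harmless normalizations $1_G\in M$ and $M=M^{-1}$, which are what make the local maps of $\sigma_s^n$ and $\sigma_c^n$ agree precisely on $G\setminus EN$.
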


\begin{proof}
    Up to enlarging $M$, we can suppose without loss of generality that $1_G \in M$ and $M=M^{-1}$. 
Since $\sigma_s \in \mathrm{LNUCA}_c(G,V)$  by hypothesis, there exist a constant configuration $c \in \LL(V^M,V)^G$ and a finite subset $E \subset G$ such that $s\vert_{G \setminus E} = c\vert_{G \setminus E}$. As in the proof of Theorem~\ref{t:decidable-lnuca-c-nilpotent}, we find that  $\sigma_s$ is periodic, resp. eventually periodic,  only if so is $\sigma_c$. 
\par 
Suppose that $\sigma_c$ is periodic, resp. eventually periodic. Hence, we can find $n_0 \geq 1$ such that $\sigma_c^{n_0}=\Id$, resp. $\sigma_c^{2n_0}=\sigma_c^{n_0}$ (see Lemma~\ref{l:simple-tau-tau-k-per}).  Let $t \in T^G$, where $T=\LL({V^N}, V)^G$ and $N=M^{n_0}$, be such that $\sigma_t= \sigma_s^{n_0}$. 
For $g \in G$, let us consider $U_g=A^{N}$, resp. $U_g= \varphi_g^{-1}( \sigma_t(A^G)_{gN})$ where   $\varphi_g \colon A^{N} \to A^{gN} $ is the canonical bijection induced by the bijection $N \to gN$, $h \mapsto gh$.
Since $s\vert_{G \setminus E}=c\vert_{G \setminus E}$ and that $\sigma_c^{n_0}=\Id$, resp. $\sigma_c^{n_0}\vert_{\sigma_c^{n_0}(V^G)}=\Id\vert_{\sigma_c^{n_0}(V^G)}$, we deduce that for every $g \in G \setminus EN$, the map $t(g)\vert_{U_g}$, is exactly the restriction of the canonical projection $z \mapsto z(1_G)$ to $U_g$.   
\par 
Consequently, if we denote $W=V^{EN^2}$, resp. $W= \sigma_t(V^G)_{EN^2} \subset V^{EN^2}$, then we obtain a well-defined  linear endomorphism $\varphi \colon W \to W$ given  for every $w \in W$ by: 
\[
\varphi(w) = f^+_{EN, t\vert_{EN}}(w) \times w\vert_{EN^2\setminus EN}. 
\]
\par 
Moreover, we have $\sigma_t^k(x) = \varphi^k(x\vert_{EN^2})\times x\vert_{G \setminus EN^2}$ for every $x \in V^G$, resp. $x \in \im \sigma_t$. Therefore, $\sigma_t$ is periodic, resp. eventually periodic, if and only if so is the endomorphism $\varphi$. Hence, we infer from  Lemma~\ref{l:simple-tau-tau-k-per} that $\sigma_s$ is periodic, resp. eventually periodic, if and only if so is $\varphi$. 
\par
We deduce from the above the following  algorithm which decides whether $\sigma_s$ is periodic, resp. eventually periodic. 
By \cite[Theorem~22]{kari-beaur}, we can effectively decide whether $\sigma_c$ is periodic, resp. eventually periodic. If $\sigma_c$ is not periodic, resp. not eventually periodic, we conclude that $\sigma_s$ is not periodic, resp. not eventually periodic. Otherwise, we can effectively compute an integer $n_0 \geq$ such that $\sigma_c^{n_0}=0$, resp. $\sigma_c^{2n_0}=\sigma_c^{n_0}$. We then compute $N=M^{n_0}$ and $t \in T^G$, where $T= \LL(V^N,V)$, such that $\sigma_t=\sigma_s^{n_0}$. Let $W=\sigma_t(V^G)_{EN^2}= \im f^+_{EN, t\vert_{EN^2}}$ and define $\varphi \in \End(W)$ as above. Then since $\End(W)$ is finite as $W$ is finite, we can easily check whether $\varphi$ is periodic, resp. eventual periodic,  which tells us whether $\sigma_s$ is periodic, resp. eventual periodic. The proof is thus   complete. 
\end{proof}
\par 
As a trivial consequence of Theorem~\ref{t:finite-cayley-hamilton-linear-ca-periodic} and Theorem~\ref{t:decidable-lnuca-c-periodic}, we obtain: 
\begin{corollary}
\label{c:decidable-group-nuca-c-cayley-hamilton}
    Let $G= \Z^d$ for some integer $d \geq 1$ and let $V$ be a finite vector space. Let $M \subset G$ be a finite subset and let $s \in \LL(V^M,V)^G$ be asymptotic to a constant configuration. Then it is decidable whether $\sigma_s$ is Cayley-Hamilton. \qed
\end{corollary}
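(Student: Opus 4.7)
The plan is simply to combine the two cited theorems via a one-line equivalence. First, I would observe that over a finite vector space alphabet $V$, the property of being Cayley-Hamilton is equivalent to being eventually periodic. The implication from eventual periodicity to Cayley-Hamilton is immediate: if $\sigma_s^{m+n} = \sigma_s^m$ for some integers $m, n \geq 1$, then the nonzero polynomial $P(z) = z^{m+n} - z^m \in k[z]$ annihilates $\sigma_s$, where $k$ is the base field of $V$. The converse implication is precisely the content of Theorem~\ref{t:finite-cayley-hamilton-linear-ca-periodic}, since any Cayley-Hamilton linear NUCA is \emph{a fortiori} pointwise Cayley-Hamilton, and over the finite vector space $V$ such a $\sigma_s$ must therefore be eventually periodic.

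Having established this equivalence, I would then invoke Theorem~\ref{t:decidable-lnuca-c-periodic}, which under exactly the hypotheses of the corollary (namely $G = \Z^d$, $V$ a finite vector space, and $s \in \LL(V^M, V)^G$ asymptotic to a constant configuration) provides an algorithm deciding whether $\sigma_s$ is eventually periodic. The desired decision procedure for the Cayley-Hamilton property is literally that same algorithm: run it on $\sigma_s$ and output its verdict. There is essentially no obstacle here, as the argument is a formal combination of two results already in hand; the only essential use of the finiteness hypothesis on $V$ is to invoke Theorem~\ref{t:finite-cayley-hamilton-linear-ca-periodic}, which would fail for infinite alphabets where the Cayley-Hamilton property is strictly more general than eventual periodicity.
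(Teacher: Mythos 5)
Your proposal is correct and is exactly the argument the paper intends: the corollary is stated there as an immediate consequence of Theorem~\ref{t:finite-cayley-hamilton-linear-ca-periodic} (Cayley--Hamilton implies eventually periodic over a finite vector space, the converse being trivial via $P(z)=z^{m+n}-z^m$) combined with the decidability of eventual periodicity from Theorem~\ref{t:decidable-lnuca-c-periodic}. No differences worth noting.
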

\par 
It is not hard to see that the above proof of Theorem~\ref{t:decidable-lnuca-c-periodic}  actually shows the following more general result for group NUCA with finite memory.  
\begin{theorem}
\label{t:decidable-group-nuca-c-periodic}
Let $G= \Z^d$ for some integer $d \geq 1$ and let $V$ be a finite group. Let $M \subset G$ be a finite subset and let $s \in \mathrm{Hom}_{grp}(V^M,V)^G$ be asymptotic to a constant configuration. Then it is decidable whether $\sigma_s$ is periodic, resp. eventually periodic.  \qed
\end{theorem}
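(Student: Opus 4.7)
The plan is to recycle the proof of Theorem~\ref{t:decidable-lnuca-c-periodic} essentially verbatim, replacing every occurrence of $\LL(V^M,V)$ with $\mathrm{Hom}_{grp}(V^M,V)$ and treating $V$ as a finite (possibly non-abelian) group. After enlarging $M$ so that $1_G \in M = M^{-1}$, fix a constant configuration $c \in \mathrm{Hom}_{grp}(V^M,V)^G$ and a finite $E \subset G$ with $s\vert_{G \setminus E} = c\vert_{G \setminus E}$.

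First, I would establish the necessary condition: if $\sigma_s$ is periodic (resp.\ eventually periodic), then so is $\sigma_c$. Assuming $\sigma_s^{n_0} = \Id$ (resp.\ $\sigma_s^{2n_0}=\sigma_s^{n_0}$), the composition theorem \cite[Theorem~6.2]{phung-tcs} produces $t \in \mathrm{Hom}_{grp}(V^N,V)^G$ with $\sigma_t = \sigma_s^{n_0}$ and $N = M^{n_0}$, together with a constant $c' \in \mathrm{Hom}_{grp}(V^N,V)^G$ with $\sigma_{c'} = \sigma_c^{n_0}$, satisfying $t(g) = c'(g)$ for all $g \in G \setminus EN$. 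Since $G = \Z^d$ is infinite, at least one such $g_0$ exists, and the forced relation at $g_0$ (namely, $t(g_0)$ being either the projection $z \mapsto z(1_G)$ or eventually so on the image of $\sigma_t$) propagates by constancy to every site, giving $\sigma_c^{n_0} = \Id$ (resp.\ $\sigma_c^{2n_0} = \sigma_c^{n_0}$).

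Second, assuming $\sigma_c$ is periodic (resp.\ eventually periodic) with witness $n_0$, I would construct the same auxiliary finite object $W$ and endomorphism $\varphi \colon W \to W$ as in the linear proof: take $W = V^{EN^2}$ in the periodic case and $W = \sigma_t(V^G)_{EN^2}$ in the eventually periodic case, with
\[
\varphi(w) = f^+_{EN,\, t\vert_{EN}}(w) \times w\vert_{EN^2 \setminus EN}.
\]
The key point — transported unchanged from the linear case — is that $t(g)\vert_{U_g}$ coincides with the restriction of $z \mapsto z(1_G)$ on the relevant domain $U_g$ for $g \in G \setminus EN$, which yields $\sigma_t^k(x) = \varphi^k(x\vert_{EN^2}) \times x\vert_{G \setminus EN^2}$ on the appropriate configurations. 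Consequently, $\sigma_s$ is periodic (resp.\ eventually periodic) if and only if the map $\varphi$ on the finite set $W$ is.

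Third, the algorithm: first decide whether $\sigma_c$ is periodic (resp.\ eventually periodic) using the group-CA analogue of \cite[Theorem~22]{kari-beaur} over $\Z^d$, then, if so, compute $n_0$, build $t$ and $\varphi$, and brute-force check periodicity of $\varphi$ by iterating it up to $|W|^{|W|} \geq |\End_{\mathrm{Set}}(W)|$ times — all effective because $W$ is finite. The main obstacle is the appeal to the decidability of periodicity and eventual periodicity for group CA over $\Z^d$ with finite group alphabet, which is the analog of \cite[Theorem~22]{kari-beaur} implicitly invoked in Theorem~\ref{t:decidable-group-nuca-c-nilpotent}; once this input is granted, the rest is a routine mutatis-mutandis translation of the linear argument.
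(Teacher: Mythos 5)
Your proposal is correct and follows exactly the route the paper intends: the paper proves this theorem by declaring it a \emph{mutatis mutandis} transcription of the proof of Theorem~\ref{t:decidable-lnuca-c-periodic} (necessary condition on the constant CA $\sigma_c$, reduction to a finite endomorphism $\varphi$ of $W$, and the Béaur--Kari decidability result for group CA as the external input), which is precisely what you do. No discrepancies worth noting.
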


\section{Some generalizations and applications} 
\label{s:8}
\subsection{Sparse global perturbations of NUCA}

If we allow multiple local perturbations of a configuration to happen remotely from each other then we obtain the following notion of sparse global perturbations. 

\begin{definition}
\label{d:sparse-perturbations}
    Let $M$ be a finite subset of a group $G$ and let $A$ be an alphabet. Let $S=A^{A^M}$ and let $s, c \in S^G$ be two  configurations. Then $s \in S^G$ is  called a \emph{sparse global perturbation} of $c$ if there exists a finite subset $E\subset G$ and a sequences $(g_n)_{n \geq 0}$ of elements in $G$ such that 
    \begin{enumerate}[\rm (i)]
        \item   $s\vert_{G \setminus F}= c\vert_{G \setminus F}$ where $F = \cup_{n \geq 0} g_n E$;
        \item 
        for every finite subset $D \subset G$, there exists an integer $n_0\geq 0$ such that $g_mD  \cap g_n D = \varnothing$         
        for all $m, n \geq n_0$. 
    \end{enumerate}
    \par 
    In this case, we say that $\sigma_c$ is a \emph{sparse global perturbation} of $\sigma_c$. 
\end{definition}
\par 
In particular, it is clear that if $s$ is asymptotic to $c$ then $s$ is a sparse global perturbation of $c$. 
\par 
\begin{example}
Let $G=\Z$ and let $f\colon \N \to \N$ be any  function such that $\lim_{n \to \infty}\frac{f(n)}{n}= \infty$. Let $g_n=\pm f(n)$ for $n \geq 0$. Then for every alphabet $A$, every finite subsets $M, E\subset G$, and configurations $s, c \in S^G$, where $S=A^{A^M}$, such that $s\vert_{G \setminus F}= c\vert_{G \setminus F}$, where $F = \cup_{n \geq 0} g_n E$, we see that $s$ is a sparse global perturbation of $c$. 
\end{example}

\subsection{Decidable properties of  perturbations of linear CA} 
Generalizing the results in Section~\ref{s:},  we show that various dynamical properties of linear NUCA which are sparse global perturbations of linear CA are also decidable.

\begin{theorem}
\label{t:decidable-sparse-perturbation-lnuca-nilpotent} 
 Let $G= \Z^d$, $d \geq 1$, and let $V$ be a finite vector space. Let $M \subset G$ be a finite subset and let $s \in \LL(V^M,V)^G$ be a sparse global perturbation of a constant configuration. Then it is decidable whether: 
 \begin{enumerate} [\rm (i)]
     \item  $\sigma_s$ is nilpotent; 
     \item  $\sigma_s$ is periodic;  
     \item  $\sigma_s$ is eventually periodic; 
     \item  $\sigma_s$ is Cayley-Hamilton. 
 \end{enumerate}
\end{theorem}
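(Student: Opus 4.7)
The plan is to adapt the proofs of Theorems~\ref{t:decidable-lnuca-c-nilpotent} and~\ref{t:decidable-lnuca-c-periodic} by decomposing a suitable iterate of $\sigma_s$ into a finite-dimensional ``cluster'' summand and infinitely many ``isolated-perturbation'' summands belonging to only finitely many types. First, for the necessary direction, I would show that each of the properties (i)--(iv) for $\sigma_s$ forces the same property for the underlying linear CA $\sigma_c$. The sparseness condition forces the sequence $(g_n)$ to have density zero in $\Z^d$, so for every finite window $D \subset G$ there exists $g \in G$ with $gD \cap F = \varnothing$, where $F = \cup_n g_n E$; hence $s$ and $c$ agree on $gD$, and a shift argument in the spirit of Lemmas~\ref{l:stable-nilpotency-nuca}, \ref{l:stable-periodic-nuca}, and~\ref{l:stable-cayley-hamilton-linear-nuca} transfers every local polynomial identity from $\sigma_s$ to $\sigma_c$. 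Since the property for $\sigma_c$ is decidable by \cite[Theorem~22]{kari-beaur}, one either returns ``no'' or obtains an explicit integer $n_0$ realizing the property for $\sigma_c$ (e.g.\ $\sigma_c^{n_0}=0$ in the nilpotent case).

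With $n_0$ in hand, set $N = M^{n_0}$ and $\tau = \sigma_s^{n_0}$, an NUCA of memory $N$ by \cite[Theorem~6.2]{phung-tcs} whose local rule vanishes at every cell $g$ with $gN \cap F = \varnothing$, since there it coincides with that of $\sigma_c^{n_0}$. Applying the sparseness condition with parameter $EN^2$, together with the finite requirement that $g_n E N^2$ be disjoint from $F_0 N^2$ for $n \geq n_1$, I would find $n_1 \geq 0$ such that the islands $g_n E N^2$ for $n \geq n_1$ are pairwise disjoint and disjoint from the cluster expansion $F_0 N^2$, where $F_0 = \cup_{n < n_1} g_n E N$ is finite. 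A short computation then shows that $\tau$ maps $V^G$ into the subspace
\[
W' \;=\; V^{F_0} \oplus \bigoplus_{n \geq n_1} V^{g_n E N}
\]
of configurations supported on $FN$, preserves this direct-sum decomposition, and splits as $\tau_0 \oplus \bigoplus_{n \geq n_1} \tau_n$; the isolation at scale $N^2$ combined with the linearity of $\tau$ ensures that inputs supported on one summand produce outputs that vanish outside that summand. Moreover, by the translation-invariance of $c$, each $\tau_n$ depends, up to translation by $g_n$, only on the perturbation type $i_n = s|_{g_n E}$ in the finite set $\LL(V^M, V)^E$, so $\tau$ has only finitely many distinct direct summands up to isomorphism.

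It follows that $\sigma_s$ has the desired property if and only if the finite-dimensional endomorphism $\tau_0$ of $V^{F_0}$ has it and every template $\tau^{(i)}$ corresponding to an occurring type does. Checking $\tau_0$ is a routine linear-algebra computation, while each $\tau^{(i)}$ is precisely the $n_0$-th iterate of the asymptotic-to-constant NUCA $\sigma^{(i)}$ with a single perturbation of type $i$ at the origin (restricted to its invariant finite-dimensional subspace), whose nilpotency, (eventual) periodicity, and Cayley--Hamiltonian character are all decidable by Theorems~\ref{t:decidable-lnuca-c-nilpotent} and~\ref{t:decidable-lnuca-c-periodic}, with Theorem~\ref{t:finite-cayley-hamilton-linear-ca-periodic} handling (iv) via reduction to (iii). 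The main obstacle is to justify that isolation at the single scale $N^2$ is already enough to make the decomposition of $\tau$ invariant under all iterates, with no circular dependence between the scale of isolation and the nilpotency (or period) bounds of the finitely many templates and the cluster; the resolution is that each summand is by construction $\tau$-invariant, so $\tau^k$ restricted to any summand stays in it, and the uniform bound across the finitely many templates and the single cluster then yields a uniform bound for $\sigma_s$.
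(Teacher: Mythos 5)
Your proposal is correct and follows essentially the same route as the paper's proof: first transfer the property from $\sigma_s$ to the unperturbed CA $\sigma_c$ via a density-zero/shift argument and decide it there by \cite[Theorem~22]{kari-beaur}, then pass to $\tau=\sigma_s^{n_0}$, use sparseness to isolate the perturbation islands at a fixed finite scale, and reduce the question to one finite cluster together with finitely many finite-dimensional endomorphism types. The one point to watch is that the explicit decomposition you write (outputs supported on $FN$, with a direct sum over islands) is the nilpotent-case picture only; for (ii) and (iii) the iterate $\tau$ acts as the identity off the islands rather than as zero, so one must use the variant of Theorem~\ref{t:decidable-lnuca-c-periodic} in which each island carries the endomorphism $w\mapsto f^+_{EN, t\vert_{EN}}(w)\times w\vert_{EN^2\setminus EN}$ of a finite space $W\subset V^{EN^2}$ --- which is exactly what the paper does and what your citation of that theorem implicitly supplies.
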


\begin{proof}
The proof is a direct generalization of the proofs of the main results of Section~\ref{s:}. We indicate below important modifications. First, we infer from  Theorem~\ref{t:finite-cayley-hamilton-linear-ca-periodic} that (iii)$\iff$(iv). By enlarging $M$, we can suppose that $1_G \in M$. By hypothesis, we are given a constant configuration $c \in  \LL(V^M,V)^G$, a finite subset $E\subset G$, and a sequence $(g_n)_{n \geq 0}$ in $G$ such that 
 \begin{enumerate}[\rm (a)]
        \item   $s\vert_{G \setminus F}= c\vert_{G \setminus F}$ where $F = \cup_{n \geq 0} g_n E$;
        \item 
        for every $D \subset G$ finite, there exists $n_0\geq 0$ such that $g_mD  \cap g_n D = \varnothing$         
        for all $m, n \geq n_0$. 
    \end{enumerate} 
\par 
Then a similar argument as in Theorem~\ref{t:decidable-lnuca-c-nilpotent} shows that if $\sigma_s$ satisfies $P$, where $P$ is any of the properties (i)-(iii), then so is $\sigma_c$. Note that $P$ is a decidable property for $\sigma_c$ by \cite[Theorem~22]{kari-beaur}. If $\sigma_c$ satisfies $P$ then we can compute a suitable integer $n_0 \geq 1$, $N=M^{n_0}$, and $t \in \LL(V^N, V)^G$ such that $\sigma_t=\sigma_s^{n_0}$ and $\sigma_c^{n_0}=0$, resp. $\sigma_c^{n_0}=\Id$, resp. $\sigma_c^{2n_0}=\sigma_c^{n_0}$ according to whether $P$ means nilpotency, resp. periodicity, resp. eventual periodicity. 
\par 
Then by (b), we can compute an integer $k_0 \geq 1$  such that $g_mEN^3 \cap g_n EN^3 = \varnothing$        and $g_mEN^3 \cap (\cup_{i=0}^{k_0} g_iEN^3)$ for all $m, n \geq k_0+1$. Then for each of the clusters $C_0=\cup_{i=0}^{k_0} g_iEN^3$, $C_1=g_{k_0+1}EN^3$, $C_2=g_{k_0+2}EN^3$, etc., we can effectively compute all the linear spaces $W_{0} \subset V^{EN^2}$, $W_1 \subset  V^{g_{k_0+1}EN^2}$, $W_2 \subset V^{g_{k_0+2}EN^2}$, etc. as well as the endormorphisms $\varphi_i\in \End(W_i)$ as in the proofs of Theorem~\ref{t:decidable-lnuca-c-nilpotent} and Theorem~\ref{t:decidable-lnuca-c-periodic}. Then it is clear that $\sigma_t$ and $\sigma_s$ satisfy $P$ if and only if so are the maps $\varphi_i$. Since the  spaces $V^{g_{k_0+i}EN^2}$ are finite and canonically isomorphic, there are only finitely many types of such maps $\varphi_i$. Therefore, we can effectively decide whether $\sigma_s$ satisfies $P$.  
\end{proof}

\section{Injectivity problem}
\label{s:9}
To generalize the known decidability result for the injectivity of higher-dimensional linear CA obtained in \cite{kari-beaur}, we need some preliminary results.  
As a straightforward extension of \cite{kitchen-schmidt} or \cite{phung-israel}, we have the following result: 

\begin{theorem}
\label{t:descend-stable-sft}
Let $A$ be a finite group and let $G=\prod_{1 \leq j \leq d} G_i$ be a monoid where $G_i= \N$ or $G_i=\Z$ for every  $i=1, \dots, d$. Then every closed group subshift $\Sigma \subset A^G$ is a subshift of finite type. Moreover, every descending chain of group subshifts of $A^G$ stabilizes. \qed   
\end{theorem}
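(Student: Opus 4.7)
The plan is to adapt the classical Kitchens-Schmidt strategy \cite{kitchen-schmidt} from $\Z^d$ to the mixed monoid $G = \prod_i G_i$ with $G_i \in \{\N, \Z\}$, by observing that the only ring-theoretic input, namely Noetherianity of the group ring, survives this generalization intact. I would first dispatch the abelian case and then reduce from an arbitrary finite $A$ to it.

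Assume first that $A$ is abelian. Via Pontryagin duality, the compact group $A^G$ has discrete dual $\hat A^{(G)} \cong \hat A \otimes_\Z \Z[G]$, and closed subgroups $\Sigma \subset A^G$ correspond in an order-reversing way to subgroups of $\hat A^{(G)}$ via annihilators; shift-invariance of $\Sigma$ translates to $\Sigma^\perp$ being a submodule for the natural coordinate-permutation action of the monoid ring $\Z[G]$. The crucial point is that, writing $a$ for the number of $\N$-factors and $b$ for the number of $\Z$-factors,
\[
\Z[G] \;\cong\; \Z[x_1, \ldots, x_a, y_1^{\pm 1}, \ldots, y_b^{\pm 1}]
\]
is Noetherian (Hilbert basis theorem, together with localization at $y_1, \ldots, y_b$), so the finitely generated module $\hat A^{(G)}$ is itself Noetherian. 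This one fact delivers both claims simultaneously: any descending chain $\Sigma_0 \supset \Sigma_1 \supset \cdots$ dualizes to an ascending chain of submodules $\Sigma_0^\perp \subset \Sigma_1^\perp \subset \cdots$ which must stabilize; and the submodule $\Sigma^\perp = \langle \chi_1, \ldots, \chi_k \rangle$ attached to any $\Sigma$ is finitely generated, so $\Sigma$ is cut out by the finitely many finite-support character conditions $\chi_i$ together with their $G$-translates, i.e.\ $\Sigma$ is SFT.

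For general finite $A$, I would reduce to the abelian case by fixing a normal series $A = A_0 \triangleright A_1 \triangleright \cdots \triangleright A_m = \{e\}$ with abelian successive quotients and inducting on $m$: each intersection $\Sigma \cap A_i^G$ is a closed group subshift of $A_i^G$, and each image of $\Sigma$ under the projection $A^G \to (A/A_i)^G$ is a closed group subshift over a smaller alphabet, so both the SFT property and the DCC pass from abelian pieces to $\Sigma$ provided they are stable under extensions of group subshifts. The main obstacle I anticipate is precisely this stability under extensions, especially in the non-solvable case where the normal series is forced to include simple non-abelian composition factors $S$ for which Pontryagin duality is unavailable and one must instead control closed subgroups of $S^G$ directly. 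This extension step is exactly what is carried out in \cite{phung-israel} in the group case, and the argument transfers to the monoid setting verbatim once one substitutes the Noetherian ring $\Z[G]$ above for the Laurent polynomial ring appearing there.
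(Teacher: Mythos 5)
Your proposal matches the paper's approach: the paper offers no proof of Theorem~\ref{t:descend-stable-sft} beyond declaring it a straightforward extension of \cite{kitchen-schmidt} and \cite{phung-israel}, and the Pontryagin-duality argument you give for abelian $A$ --- closed shift-invariant subgroups of $A^G$ correspond to submodules of the finitely generated module $\hat{A}\otimes_{\Z}\Z[G]$ over the Noetherian ring $\Z[x_1,\dots,x_a,y_1^{\pm 1},\dots,y_b^{\pm 1}]$, whence both the ascending chain condition on annihilators and finite generation of $\Sigma^{\perp}$ (hence the finite-type presentation by translates of finitely many characters) --- is precisely the mechanism underlying that citation, correctly transported to the monoid $\N^{a}\times\Z^{b}$. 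For non-abelian $A$ you defer the composition-series/extension step to the same references the paper invokes, which puts you at exactly the paper's level of rigor; just note that for non-solvable $A$ the series necessarily contains simple non-abelian quotients, so that step is genuinely the content of \cite{kitchen-schmidt} and \cite{phung-israel} rather than a formality (and the paper's actual applications only ever use finite abelian, indeed vector-space, alphabets).
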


\par 
Let $A$ be a set and let $G$ be a monoid, we say that a configuration $x \in A^G$ is \emph{eventually periodic} if its orbit under the shift action of $G$ is finite. More generally, if $G$ is a monoid of endomorphisms acting on a set $X$, then a point $x \in X$ is \emph{eventually periodic} if its orbit $Gx \subset X$ is finite.  
\par 
A straightforward and minor  modification of the proof of  \cite[Theorem~7.2]{kitchen-schmidt} actually shows the following. 

\begin{theorem}
\label{t:main-density}
Let $A$ be a finite abelian group and let $G=\prod_{1 \leq j \leq d } G_i$ be a monoid where $G_i= \N$ or $G_i=\Z$ for every  $i=1, \dots, d$. Then every closed group subshift  $\Sigma \subset A^G$ contains a dense subset of eventually periodic configurations.  \qed 
\end{theorem}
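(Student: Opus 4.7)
The plan is to reduce to the classical Kitchens--Schmidt density theorem for group shifts over $\Z^d$ via a natural extension. Reorder coordinates so that $G_i=\Z$ for $1\le i\le e$ and $G_i=\N$ for $e<i\le d$, so that $G=\Z^e\times\N^{d-e}\subset\Z^d$. By Theorem~\ref{t:descend-stable-sft}, $\Sigma$ is a group subshift of finite type, so there exist a finite box $M\subset G$ containing $1_G$ and a subgroup $\Lambda\le A^M$ such that
\[
\Sigma=\{x\in A^G: (g^{-1}x)|_M\in\Lambda\text{ for every }g\in G\text{ with }gM\subset G\}.
\]

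Next, I would lift the constraint to the full group $\Z^d$ by defining $\widetilde\Sigma\subset A^{\Z^d}$ to be the SFT cut out by the same local rule $\Lambda$ applied at every position of $\Z^d$. Since $A$ is a finite abelian group and $\Lambda$ is a subgroup of $A^M$, the set $\widetilde\Sigma$ is a closed $\Z^d$-invariant subgroup of $A^{\Z^d}$, i.e.\ a closed group subshift over $\Z^d$. The classical density theorem of Kitchens--Schmidt \cite[Theorem~7.2]{kitchen-schmidt} then applies: $\Z^d$-periodic configurations are dense in $\widetilde\Sigma$.

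To transfer density back to $\Sigma$, two ingredients are needed. First, an \emph{extension lemma}: every $x\in\Sigma$ lies in the restriction $\widetilde\Sigma|_G$. Given $x\in\Sigma$, one fills in the cells of $\Z^d\setminus G$ inductively, extending over larger and larger boxes; at each step the set of admissible values for a new cell is, by the subgroup property of $\Lambda$, either empty or a coset of some subgroup of $A$, and it is nonempty because $\Lambda$ is nonempty (it contains $0$, so the all-zero extension is always admissible as a baseline that one can then perturb inside $\Lambda$ to match $x$ on the already-filled region). A König-type compactness argument across the resulting locally finite tree of partial extensions then produces a global $\widetilde x\in\widetilde\Sigma$ with $\widetilde x|_G=x$. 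Second, a \emph{restriction lemma}: if $\widetilde y\in\widetilde\Sigma$ is $\Z^d$-periodic, then its $\Z^d$-orbit is finite, and the $G$-orbit of $y:=\widetilde y|_G$ is a quotient of this finite orbit under the restriction map, so $y$ is eventually periodic in the sense of the theorem.

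Combining these, given $x\in\Sigma$ and a basic open neighborhood determined by a finite window $W\subset G$, I would extend $x$ to $\widetilde x\in\widetilde\Sigma$, approximate $\widetilde x$ on $W$ by a $\Z^d$-periodic $\widetilde y\in\widetilde\Sigma$, and set $y=\widetilde y|_G\in\Sigma$; then $y|_W=x|_W$ and $y$ has finite $G$-orbit. The main obstacle will be the extension lemma: one must ensure that all SFT constraints whose supports straddle the boundary of $G$ can be simultaneously satisfied when filling in $\Z^d\setminus G$. This is precisely where the subgroup structure of $\Lambda$ (as opposed to a general SFT constraint, which could be empty on some fibers) is essential, and it is the reason the hypothesis that $\Sigma$ is a \emph{group} subshift cannot be dropped.
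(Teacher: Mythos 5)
Your reduction to the two-sided Kitchens--Schmidt theorem breaks down at the extension lemma, which is false in general: a configuration in a one-sided group subshift of finite type need not extend to the two-sided group SFT cut out by the same window and subgroup. Take $d=1$, $G=\N$, $A=\Z/4\Z$, $M=\{0,1\}$ and $\Lambda=\{(a,2a):a\in A\}\leq A^M$. Then $\Sigma=\{x\in A^{\N}: x(n+1)=2x(n)\ \forall n\geq 0\}$ is a closed, shift-invariant subgroup of $A^{\N}$ with four elements (namely $x(n)=2^n x(0)$, so $x(n)=0$ for $n\geq 2$), whereas the two-sided SFT $\widetilde\Sigma\subset A^{\Z}$ defined by the same rule is $\{0\}$, since $x(n)=4x(n-2)=0$ for every $n$. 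So no nonzero $x\in\Sigma$ extends, and $\widetilde\Sigma\vert_G$ is not even dense in $\Sigma$. The flaw in your nonemptiness argument is exactly where you locate it but do not resolve it: for a group constraint, the set of admissible extensions of a given partial configuration is a coset of the group of admissible extensions of the zero configuration, and a coset can be empty; the fact that $0$ always extends says nothing about a general $x$. (The counterexample does not contradict the theorem --- every point of that $\Sigma$ is eventually periodic --- but it kills the proposed route. Note also that the subshifts the paper actually applies this theorem to, such as kernels of linear CA restricted to half-spaces in the injectivity algorithm, are precisely of this non-extendable type, so one cannot rescue the argument by adding a surjectivity hypothesis on the shift.)

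The paper takes a different and unavoidable route: it reworks the Kitchens--Schmidt structure-theoretic proof directly over the monoid $G$, replacing periodic points by eventually periodic points throughout. Concretely, it redefines the subgroups $F_H(g,n)$ for $n\leq 0$ as $\{y(0): y\in Y(H),\ y(-n)=e\}$ (so that only nonnegative coordinates are ever evaluated), checks that the resulting filtration still yields the isomorphisms $\theta_n\colon G/F_H(-n)\to G/F_H(n)$ and the splitting $Y\simeq \Lambda_1^G\times\cdots\times\Lambda_K^G\times G_K$, and replaces $\mathrm{Fix}(\alpha^p)$ by $\{x:\alpha^{2p}(x)=\alpha^p(x)\}$ to produce eventually periodic rather than periodic points. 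Any correct proof has to engage with the one-sided structure at this level rather than outsourcing it to the $\Z^d$ case.
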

\par 
We only note below a few modifications needed in the original proof of \cite[Theorem~7.2]{kitchen-schmidt}. For instance, in \cite[Lemma~8.1]{kitchen-schmidt}, $\alpha$-periodic points should be modified to  $\alpha$-eventually periodic points. 
Let $e$ denote the neutral element of $A$. 
For every subgroup $H \subset A \times A$ and $Y(H) = \Sigma(G, A; H)$ where $G=\N$ or $G=\Z$, we define the following subgroups of $A$ (note the minor but important difference between the definition of $F_H(g,n)$ given below and the one in \cite{kitchen-schmidt} when $n \leq 0$): 
\[
F_H(g,n) 
= \begin{cases}
    \{y(n) \colon y \in Y(H), y(0)=e\} \quad & \text{if } n \geq 0;
\\
  \{y(0) \colon y \in Y(H), y(-n)=e\} \quad & \text{if } n \leq 0. 
\end{cases}
\]
\par 
Then $F_H(n) = F_H(e, n)$ is a closed normal subgroup of $G$ for every $n \in \Z$. Since $(e,e) \in H$, we have  $F_H(n+1) \subset F_H(n)$ and $F_H(-n) \subset F_H(-n-1)$ for all $n \geq 0$. Moreover, the kernel of the homomorphism  $G \to G / F_H(n)$, $g \mapsto F_H(g,n)/F_H(n)$ is equal to $F_H(-n)$. Hence, we also obtain an isomorphism: 
\[
\theta_n \colon G/F_H(-n) \to G / F_H(n). 
\]
and all other properties of $F_H(n)$. 
Let $\Lambda_1=F_H(-1) \cap F_H(1)$. 
Let $Y_n=Y_H \cap F_H(n)^G$. Then $Y_1\simeq \Lambda_1^G$ and, for example,  \cite[Lemma~5.7.(4)]{kitchen-schmidt} becomes 
\[
Y \simeq \Lambda_1^G \times \dots \times \Lambda_K^G\times G_K. 
\]
\par
We also need to modify the definition of $\mathrm{Fix}(\alpha^p)$ in \cite{kitchen-schmidt} to, for example, $\mathrm{Fix}(\alpha^p) = \{x \in X \colon \alpha^{2p}(x) = \alpha^p(x)\}$ to effectively deal with eventually periodic points instead of periodic points. 
\subsection{Effective constructions} 
Observe that not only the set of periodic configurations but the set of eventually periodic configurations are also countable. 
Therefore, using Theorem~\ref{t:main-density}, it is not hard to see that the results in \cite{kari-beaur} also hold for group subshifts and cellular automata over finite abelian group alphabets and monoid  universes of the form $\N^{d_1} \times \Z^{d_2}$. 
\par 
However, a few remarks are in order to interpret correctly their results in our slightly  more general setting.  Let $A$ be a finite abelian group and let $G= \prod_{1 \leq i \leq d} G_i$ where $G_i=\N$ or $\G_i=\Z$ for every $i=1, \dots, d$.
\begin{definition}
For every integer $n \geq 1$ and every $i=1, \dots, d$, the \emph{slice $\pi_i^{(n)}(X)$ of width $n$ in the $i$-th dimension} of a group subshift $X\subset A^{G}$ is defined as the projection $X_{G_1 \times \dots \times \{0,\dots, n-1\}\times \dots G_d  }$ regardless of whether $G_i=\N$ or $G_i=\Z$. 
\end{definition}
\par 
The notion of radius of synchronization should be modified similarly when we work with the monoid $\N$ instead of $\Z$. 
\par 
Let $\hat{G}_i= \prod_{1 \leq j \leq d, j\neq i } G_i$ for every $i=1, \dots, d$. For finite groups $A_1, A_2$ and a group subshift $X \subset (A_1 \times A_2)^G$, we denote the projection to the $i$-th alphabet $\Psi^{(i)} \colon (A_1 \times A_2)^G \to A_i^G$, $i=1,2$, by setting 
$\Psi^{(i)}(x)(g) = x^{(i)}(g)$ for all $g \in G$ and $x = (x^{(1)}, x^{(2)})\in (A_1 \times A_2)^G$ where $x^{(j)} \in  A_j^G$, $j=1,2$.  
More generally, let $E \subset F$ be finite sets and for each $i \in F$, let $A_i$ be a finite group. Then we define similarly the projection $\Psi^{E, F} \colon \prod_{i \in F}A_i^G \to \prod_{i \in E}  A_i^G$. 
\par 
From the proof of \cite[Lemma~9]{kari-beaur}, we can deduce  the following result: 
\begin{theorem}
\label{t:main-effective-construction}
Let $A, A_1, A_2$ be finite abelian groups and let $G=\prod_{1 \leq j \leq d} G_i$ be a monoid where $G_i= \N$ or $G_i=\Z$ for every  $i=1, \dots, d$. Then the following hold: 
\begin{enumerate}[\rm (a)]
    \item given a closed group subshift $X \subset A^G$, we can effectively construct the $d-1$-dimensional group subshift  $\pi_i^{(n)}(X) \subset (A^{n})^{\hat{G}_i}$ of width $n$ in the $i$-th dimension for every $n \geq 1$ and $i=1, \dots, d$; 
    \item 
    given a closed group subshift  $X \subset (A_1 \times A_2)^G$, we can effectively construct the $d$-dimensional group subshift $\Psi^{(1)}(X) \subset A_1^G$.  
\end{enumerate}
\end{theorem}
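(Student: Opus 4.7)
The plan is to adapt the proof of \cite[Lemma~9]{kari-beaur} to the mixed-monoid setting $G=\prod_{i=1}^d G_i$ with $G_i\in\{\N,\Z\}$. In both (a) and (b), the map in question is a shift-equivariant continuous group homomorphism, so $\pi_i^{(n)}(X)$ and $\Psi^{(1)}(X)$ are images of a closed group subshift under such a map; they are therefore again closed group subshifts (compactness of the source handles closedness, and the group and shift structure are inherited). By Theorem~\ref{t:descend-stable-sft} each is a subshift of finite type, so ``effectively constructing'' it amounts to effectively producing a finite set of forbidden patterns.

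I would begin with (b), which is conceptually cleaner. Represent $X$ by a finite list of forbidden patterns $\mathcal{F}$. Membership of $y\in A_1^G$ in $\Psi^{(1)}(X)$ means the existence of $z\in A_2^G$ with $(y,z)\in X$. On each finite window $F\subset G$ and each candidate pattern $p\in A_1^F$, run two semi-algorithms in parallel. The \emph{positive} semi-algorithm searches the countable set of eventually periodic elements of $(A_1\times A_2)^G$ for a configuration $(\tilde y,\tilde z)$ that avoids $\mathcal{F}$ and whose first coordinate extends $p$; by Theorem~\ref{t:main-density} applied to $X$, such a witness exists whenever $p\in\Psi^{(1)}(X)\vert_F$. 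The \emph{negative} semi-algorithm searches for a finite combinatorial obstruction constructed from $\mathcal{F}$ certifying that no extension of $p$ lies in $X$, which is possible precisely because $X$ is SFT. One of the two semi-algorithms always terminates, so membership in $\Psi^{(1)}(X)$ is decidable on every finite window, and enumerating forbidden patterns of the projection becomes algorithmic.

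For (a), I would first recode $A^G$ as $(A^n)^{\hat G_i}$ by grouping $n$ consecutive $G_i$-coordinates (starting from $0$ in the $\N$-case, and in the obvious translation-invariant way in the $\Z$-case). Under this recoding the slice $\pi_i^{(n)}(X)$ becomes a closed group subshift over the lower-dimensional monoid $\hat G_i$, and the recoded $X$ is still an SFT since forbidden patterns pull back to forbidden patterns of controlled size. The same two-sided enumeration scheme as in (b) then applies verbatim, now using Theorem~\ref{t:main-density} on $\hat G_i$ for the positive witnesses.

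The main obstacle is \emph{effective termination}: we need an algorithmically checkable stopping criterion certifying that the forbidden-pattern set enumerated so far already defines the projection rather than a strict over-approximation. This is exactly where Theorem~\ref{t:descend-stable-sft} is essential. The SFTs $Y_k$ defined by the first $k$ enumerated forbidden patterns of the projection form a descending chain that contains the projection, and by the stabilization statement this chain becomes constant after finitely many steps. Stabilization is detectable because, at each stage, one can decide whether a current pattern on a finite window is in the already-constructed SFT (SFT membership is decidable) and simultaneously whether it arises from an eventually periodic element of $X$ via the projection, again using Theorems~\ref{t:descend-stable-sft} and~\ref{t:main-density} to compare the languages of two group SFTs on successively larger windows.
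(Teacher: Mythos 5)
Your membership test is sound: combining the density of eventually periodic configurations (Theorem~\ref{t:main-density}) for the positive direction with a compactness argument for the negative direction does show that, for every finite window $F$ and pattern $p$, one can decide whether $p$ belongs to the language of $\Psi^{(1)}(X)$ or of $\pi_i^{(n)}(X)$. The genuine gap is in the step you yourself flag as the main obstacle: your stopping criterion does not actually stop. Comparing the language of the current over-approximation $Y_k$ with the (computable) language of the projection ``on successively larger windows'' is only a semi-decision procedure for \emph{strict} inclusion; if $Y_k$ already equals the projection, the comparison never terminates with a positive certificate, and if you truncate the comparison at any finite stage you may certify a wrong answer. Knowing abstractly that the descending chain stabilizes (Theorem~\ref{t:descend-stable-sft}) tells you a correct $Y_k$ exists, but gives no algorithmically checkable witness of \emph{which} $k$ works; local stabilization $Y_k=Y_{k+1}$ does not certify global stabilization. (A secondary issue: the $Y_k$ cut out by the first $k$ forbidden patterns need not be group subshifts, so Theorem~\ref{t:descend-stable-sft} does not apply to that chain as written; one must instead use the subgroups of allowed patterns on growing windows.)

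The paper's proof, following \cite[Lemma~9]{kari-beaur}, supplies exactly the missing internal certificate via an induction on the dimension $d$, with (a) in dimension $d$ relying on (b) in dimension $d-1$. For the width-$m$ slice $Z_m=\pi_i^{(m)}(X)$ one builds over-approximating group shifts $\mathcal{X}_{Q_{m,k}}$ and, using the inductive hypothesis, \emph{effectively constructs} their two width-$n$ sub-slices $T^L_{m,k}$ and $T^R_{m,k}$ as explicitly presented $(d-1)$-dimensional group SFTs; the key point is that $\mathcal{X}_{Q_{m,k}}=Z_m$ if and only if $T^L_{m,k}=T^R_{m,k}$, and equality of two explicitly presented group SFTs is decidable. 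This replaces your non-terminating comparison against the unknown target by a decidable comparison between two already-constructed objects. There is also a wrinkle specific to the monoid directions $G_i=\N$ that your recoding glosses over: the left and right sub-slices $L_m=\Psi^{E'_m,F_m}(Z_m)$ and $R_m=\Psi^{E_m,F_m}(Z_m)$ are no longer translates of one another, and one must first use the stabilization of the chain $L_{m+1}=R_m\subset L_m$ (again Theorem~\ref{t:descend-stable-sft}) to guarantee that for $m$ large enough the equality test is both reachable and meaningful. Without an argument of this kind, your procedure computes the language of the projection but never produces a certified finite set of forbidden patterns, which is what ``effectively construct'' requires.
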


\begin{proof}
We indicate only the following nontrivial modification in the proof of (a) for dimension $d$ with $G_i=\N$ assuming (b) holds for dimension $d-1$. 
Let $P$ be a finite set of forbidden patterns of $X$ so that we can denote $X=\mathcal{X}_P$ (see the proof of \cite[Lemma~9]{kari-beaur} for more details on the notations). We can  suppose that $n$ is larger than  the width of all the forbidden patterns in $P$. 
\par 
Instead of constructing directly as in the proof of \cite[Lemma~9]{kari-beaur} the upper approximation group subshifts  $\mathcal{X}_Q$ of $Y= \pi_i^{(n)}(X) \subset (A^{n})^{\hat{G}_i}$, we need to proceed more carefully as follows. 
For every $m \geq n+1$, we denote 
$E_m=\{m-n,\dots, m-1\}$, 
$E'_m= \{m-n-1,\dots, m-2\}$,  $F_m= \{0,1,\dots,m -1\}$,  and consider the following group shifts:
\begin{align*}
    Z_m = \pi_i^{(m)}(X)  \subset & (A^{F_m})^{\hat{G}_i}=(A^m)^{\hat{G}_i}, \\
    L_m = \Psi^{E'_m, F_m}(Z_m)  \subset (A^{n})^{\hat{G}_i} &, \quad  R_m = \Psi^{E_m, F_m}(Z_m)  \subset  (A^n)^{\hat{G}_i}. 
\end{align*} 
\par 
Then $ L_{m+1} = R_{m} \subset L_m$ and we deduce from Theorem~\ref{t:descend-stable-sft} that $R_{m}=L_m$ for all large enough $m$.  
\par 
Now we proceed as in the proof of \cite[Lemma~9]{kari-beaur} to obtain a decreasing sequence of upper approximation  group shifts  $\mathcal{X}_{Q_{m,k}} \subset (A^{F_m})^{\hat{G}_i}$ of $Z_m$ for each $m \geq n +1$ by testing and adding suitably  more and more forbidden patterns of $Z_m$ to obtain $Q_{m,k}$. Let us define
\begin{align*}
    T^L_{m,k}= \Psi^{E'_m, F_m}(\mathcal{X}_{Q_{m,k}}) \subset (A^n)^{\hat{G}_i}, 
    \quad T^R_{m,k}= \Psi^{E_m, F_m}(\mathcal{X}_{Q_{m,k}}) \subset (A^n)^{\hat{G}_i}.  
\end{align*}
\par 
Then $T^L_{m,k}$ and $T^R_{m,k}$ can be effectively constructed by our assumption that (b) holds for dimension $d-1$. It is clear as in the proof of Claim~1 in  \cite[Lemma~9]{kari-beaur} that $\mathcal{X}_{Q_{m,k}}=Z_m$ if and only if $T^{L}_{m,k}=T^R_{m,k}$. 
By Theorem~\ref{t:descend-stable-sft} and the above discussion on $Z_m$, such a pair $(m_0,k_0)$ exists and thus can be effectively computed. Finally, it suffices to compute $\pi_i^{(n)}(X) \subset (A^{n})^{\hat{G}_i}$  by (b) as the projection: 
\begin{align*}
\pi_i^{(n)}(X)  = \Psi^{\{0,\dots, n\}, F_{m_0}}(Z_m) 
=  \Psi^{\{0,\dots, n\}, F_{m_0}}(\mathcal{X}_{Q_{m_0,k_0}}). 
\end{align*}
\par 
The rest of the proof follows the same proof, \emph{mutatis mutandis}, of \cite[Lemma~9]{kari-beaur}. 
\end{proof}
\subsection{Injectivity is decidable for $\mathrm{LNUCA}_c$} 
We can now state and prove that the injectivity is   a decidable  property for local perturbations of higher dimensional linear CA.
\begin{theorem}
    \label{t:main-injectivity}
  Let $G= \Z^d$ for some integer $d \geq 1$ and let $V$ be a finite vector space. Let $M \subset G$ be a finite subset and let $s \in \LL(V^M,V)^G$ be asymptotic to a constant configuration. Then it is decidable whether $\sigma_s$ is injective. 
\end{theorem}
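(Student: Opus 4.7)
The plan is to reduce the injectivity of $\sigma_s$ to a finite-dimensional linear algebra question by exploiting that $\sigma_s$ differs from an underlying linear CA $\sigma_c$ only on a finite set $E \subset G$. Let $c \in \LL(V^M,V)^G$ be the constant configuration with $s\vert_{G \setminus E} = c\vert_{G \setminus E}$, write $\sigma_c$ for the corresponding linear CA, and set $\delta := \sigma_s - \sigma_c$. Then $\delta$ factors through the restriction $V^G \to V^{EM}$ followed by a linear map into the finite-dimensional subspace $V_E \subset V^G$ of configurations supported on $E$; in particular $\delta$ has finite rank. Every $x \in \Ker \sigma_s$ satisfies $\sigma_c(x) = -\delta(x) \in V_E$, so
\[
\Ker \sigma_s \subset Y := \sigma_c^{-1}(V_E),
\]
and $\Ker \sigma_s$ is precisely the kernel of the linear map $\sigma_s\vert_Y \colon Y \to V_E$.

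First I would prove that $\sigma_s$ injective forces $\sigma_c$ injective, so that the question reduces (via \cite[Theorem~22]{kari-beaur} applied to $\sigma_c$, a linear CA on $\Z^d$ over a finite vector space) to the case $\sigma_c$ injective. Suppose $\sigma_c$ is not injective. If $\sigma_c$ is not pre-injective, pick a non-zero finitely supported $x \in \Ker \sigma_c$ and translate by a large $g \in G$ so that $\mathrm{supp}(gx) \cap EM = \varnothing$; then $\delta(gx) = 0$ and $\sigma_s(gx) = \sigma_c(gx) = 0$ by shift-equivariance, yielding $gx \in \Ker \sigma_s \setminus \{0\}$. Otherwise $\sigma_c$ is pre-injective, hence surjective by Moore--Myhill, so $\sigma_c\vert_Y \colon Y \to V_E$ is surjective and the rank--nullity theorem gives $\dim Y = \dim \Ker \sigma_c + \dim V_E > \dim V_E$; thus $\sigma_s\vert_Y \colon Y \to V_E$ automatically has non-trivial kernel.

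Now assume $\sigma_c$ is injective. By Richardson's theorem, $\sigma_c$ is bijective and its inverse $\tau := \sigma_c^{-1}$ is itself a linear CA with some finite memory $M'$. Hence $Y = \tau(V_E)$ is finite-dimensional of dimension $\vert E\vert\dim V$, consisting of configurations supported in $EM'^{-1}$. Computing $\tau$ and applying it to a basis of $V_E$ yields an explicit basis of $Y$, and then $\sigma_s\vert_Y \colon Y \to V_E$ becomes an explicit matrix over the base field of $V$; its injectivity, and therefore that of $\sigma_s$, is decided by Gaussian elimination.

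The main obstacle lies in effectively computing the inverse CA $\tau$, or equivalently its finite memory $M'$. This is exactly what the extension of the Kari--Beaur effective group-subshift machinery from $\Z^d$-universes to the monoid universes $\N^{d_1} \times \Z^{d_2}$ achieves, via the effective projection of closed group subshifts onto half-spaces (Theorem~\ref{t:main-effective-construction}). Once $\tau$ is made explicit, assembling the finite-dimensional linear map $\sigma_s\vert_Y$ and testing its injectivity is routine, yielding the desired algorithm.
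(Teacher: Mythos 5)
Your argument is correct, but it is structured quite differently from the paper's. The paper runs two semi-algorithms in parallel: one enumerates candidate inverse NUCA $\sigma_t$ with asymptotically constant $t$ and tests the finitely many local identities $\sigma_t\circ\sigma_s=\Id$ (halting exactly when $\sigma_s$ is injective, via the amenability-based invertibility theorem of \cite{phung-tcs}), while the other searches for a nonzero element of $\Ker\sigma_s$ on growing windows, using the effectively computed slices of the half-space kernel subshifts of $\sigma_c$ (Theorem~\ref{t:main-effective-construction}) as boundary conditions certifying extendability to a global kernel configuration. You instead first decide injectivity of the unperturbed CA $\sigma_c$ outright via \cite{kari-beaur}, dispose of the non-injective case by hand (translating a finitely supported kernel element away from $EM$ when $\sigma_c$ is not pre-injective, and otherwise using Moore--Myhill plus the observation that $\Ker\sigma_s$ equals the kernel of the restriction $\sigma_s\vert_Y\colon Y\to V_E$ with $Y=\sigma_c^{-1}(V_E)$ of dimension exceeding $\dim V_E$), and in the injective case invoke Richardson's theorem to get a linear CA inverse $\tau$, so that $Y=\tau(V_E)$ is an explicitly computable finite-dimensional space of finitely supported configurations and everything collapses to Gaussian elimination. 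Your route yields a genuine total algorithm rather than two dovetailed semi-decision procedures, and it bypasses almost all of the half-space subshift machinery of Section~\ref{s:9}; the paper's route is heavier but its kernel-detection scheme is built to generalize beyond the strictly linear local-perturbation setting. Two small repairs: the identity $\dim Y=\dim\Ker\sigma_c+\dim V_E$ should be phrased as $\dim Y>\dim V_E$ (since $\Ker\sigma_c$ may be infinite-dimensional), though the conclusion is unaffected; and Theorem~\ref{t:main-effective-construction} is not really the tool that produces $\tau$ --- the standard and sufficient device is to enumerate finite memories $N$ and linear local rules $\mu\in\LL(V^N,V)$ and test the decidable condition $\sigma_\mu\circ\sigma_c=\Id$, a search guaranteed to terminate precisely because Richardson's theorem asserts the inverse CA exists. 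With that substitution your proof is complete.
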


\begin{proof}
We can suppose without loss of generality that $1_G \in M$. 
By hypothesis, we are given a constant configuration $c \in \LL(V^M,V)^G$ and a finite subset $E \subset G$ such that $s\vert_{G \setminus E}=c\vert_{G \setminus E}$. 
\par 
Since $G=\Z^d$ is an amenable group, we infer from \cite[Theorem~A]{phung-tcs} and \cite[Theorem~B]{phung-tcs} that $\sigma_s$ is injective if and only if it is invertible if and only if we can find a finite finite subset $N \subset G$ with $1_G \in N$ and some $t \in \LL(V^N,V)^G$ asymptotically constant such that $\sigma_t \circ \sigma_s = \Id$ or equivalently, for every $g \in EN^2M^2$, the map 
\[
\varphi_{g}(t)=f^+_{gN, t\vert_{gN}}\circ f^+_{gNM, s\vert_{gNM}}
\] 
is equal to the canonical projection $V^{gNM} \to V^{\{g\}}$.  
Consequently, in the case $\sigma_s$ is injective, we can simply enumerate all finite subsets $N \subset G$ and all  asymptotically constant configurations $t \in \LL(V^N,V)^G$ and test whether the above finitely many maps $\varphi_{g}(t)$ is the canonical projection $V^{gNM} \to V^{\{g\}}$. 
\par 
In parallel, to deal with the case when $\sigma_s$ is not injective, we proceed as follows. To simplify the notations, we present the proof in the case $d=2$. The general case is similar. 
We can suppose that $M, E \subset \{-r, \dots, r\}^2 \subset \Z^2$ for some integer $r \geq 1$ which will be incremented at each step of our algorithm.  
\par 
For every $a \in \Z$, we denote $\Z_{\geq a} = \{n \colon n \geq a \}\subset \Z$ and similarly 
$\Z_{\leq a} = \{n \colon n \leq a \}$. 
Let 
$T^L \subset V^{\Z_{\leq -3r}\times \Z}$ and $T^R \subset V^{\Z_{\geq 3r}\times \Z}$ be the group shifts of finite type defined respectively as the kernels of linear maps 
\[
\sigma_c\vert_{ V^{\Z_{\leq -3r}\times \Z}} \colon  V^{\Z_{\leq -3r}\times \Z} \to  V^{\Z_{\leq -2r}\times \Z}, \quad 
\sigma_c\vert_{ V^{\Z_{\leq 3r}\times \Z}} \colon  V^{\Z_{\leq 3r}\times \Z} \to  V^{\Z_{\leq 2r}\times \Z}. 
\] 
\par 
Then by Theorem~\ref{t:main-effective-construction}, we can  compute the slices 
$S^L \subset V^{\{-5r,\dots,  -3r\}\times \Z}$ and 
$S^R \subset V^{\{3r,\dots,  5r\}\times \Z}$ of $T^L$ and $T^R$ respectively.
We can respectively compute  finite sets $P_L$ and $P_R$ of forbidden patterns of the group shifts of finite type $S^L$ and $S^R$. Let $h_0\geq 0$ be the maximal  height of the patterns in $P_L$ and $P_R$.  
\par 
Similarly, for every integer $a \geq 0$,  let $T^U(a) \subset  V^{\{-5r,\dots,  5r\}\times \Z_{\geq a}}
$ and $T^D(a)\subset  V^{\{-5r,\dots,  5r\}\times \Z_{\leq -a}}$ be respectively the kernels of the linear maps: 
\begin{align*}
\sigma_c\vert_{V^{\{-5r,\dots,  5r\}\times \Z_{\geq a}}} & \colon  V^{\{-5r,\dots,  5r\}\times \Z_{\geq a}} \to  V^{\{-4r,\dots, 4r\}\times \Z_{\geq a+r}}
, \\  
\sigma_c\vert_{V^{\{-5r,\dots,  5r\}\times \Z_{\leq - a}}} & \colon  V^{\{-5r,\dots,  5r\}\times \Z_{\leq -a}} \to  V^{\{-4r,\dots, 4r\}\times \Z_{\leq -a-r}}. 
\end{align*}
\par 
Hence, by adding the forbidden patterns $P_L \cup P_R$ of both $S^L$ and $S^R$ to the set of forbidden patterns of $T^U$ and $T^D$ respectively, we obtain the following effectively computable group shifts of finite type  for every integer $a \geq 0$:  
\begin{align*}
    W^U(a) & \subset  V^{\{-5r,\dots,  -5r\}\times \Z_{\geq a}}, \\ W^D(a) & \subset V^{\{-5r,\dots,  -5r\}\times \Z_{\leq -a}}. 
\end{align*}
\par 
Again by Theorem~\ref{t:main-effective-construction}, we can effectively compute the slices 
$S^U(a) \subset V^{\{-5r,\dots,  -5r\}\times \{a,\dots,  a+h_0+2r\}}$ and 
$S^D(a) \subset V^{\{-5r,\dots,  -5r\}\times \{-a,\dots,  -a-h_0-2r\}}$ of the group shifts  $W^U(a)$ and $W^D(a)$ respectively.

\par 
Finally, we have gathered all the ingredients to describe an algorithm which detects the existence of a nonzero configuration in the kernel of $\sigma_s$ as follows. Let us define $b=2r+h_0$, $F= \{-4r, \dots , 4r\} \times \{-b, \dots, b\}$ then  
$\Ker (f^+_{F, s\vert_F}) \subset V^Z$ where $Z=\{-5r, \dots , 5r\} \times \{-3r-h_0, 3r+h_0\}$. Let us consider the following finite sets 
\begin{align*}
    Z^U & ={\{-5r, \dots , 5r\} \times \{r, \dots, 3r+h_0\}},
    \\ Z^D& ={\{-5r, \dots , 5r\} \times \{-r, \dots, -3r-h_0\}}\\  
    Z^L& =\{-5r, \dots , -3r\} \times \{-3r-h_0, 3r+h_0\}, \\ Z^R& =\{3r, \dots , 5r\} \times \{-3r-h_0, 3r+h_0\}. 
\end{align*}
\par 
We define the following effectively computable set  $Y=\Ker (f^+_{F, s\vert_F}) \cap Y_U \cap Y_D \cap Y_L \cap Y_R \subset V^Z$  where:  
\begin{align*}
Y^U =\{y \in Z \colon y\vert_{Z^U} \in S^U(r)\}&, \quad 
Y^D = \{y \in Z \colon y\vert_{Z^D} \in S^D(r)\},
\\ 
Y^L= \{y \in Z \colon y\vert_{Z^L} \in T^L_{Z^L}\}&, \quad 
Y^R= \{y \in Z \colon y\vert_{Z^R} \in T^R_{Z^R}\}. 
\end{align*}
\par 
Then it is clear by our constructions that every element $y \in Y$ can be extended to an element in $\Ker(\sigma_s)$ and conversely we have $x\vert_Z \in Y$ for  every $x \in \Ker(\sigma_s)$. In other words, 
$(\Ker \sigma_s)_{Z} = Y$. Consequently, if $\sigma_s$ is not injective then for large enough $r$, we have $Y \neq \{0^Z\}$, which is trivially a decidable property, and we can conclude that $\sigma_s$ is not injective. In the case $Y=\{0^Z\}$, we simply increase $r$ and repeat the above procedure. The proof is thus complete. 
\end{proof}

\par 
\begin{proof}[Proof of Theorem~\ref{t:intro-decidable-local-perturbation-lnuca}]
Assertions (i)-(iv) result directly from Theorem~\ref{t:decidable-sparse-perturbation-lnuca-nilpotent} and Theorem~\ref{t:main-injectivity}. Since the  post-surjectivity of $\sigma_s$ is equivalent to the effectively computable dual linear NUCA $\sigma_s^*$ by \cite{phung-dual-nuca}, Theorem~\ref{t:intro-decidable-local-perturbation-lnuca}.(v) is a consequence of (iv). 
\end{proof}

\par 
Finally, we note that our proofs, \emph{mutatis mutandis}, actually show the following result which is slightly more general than Theorem~\ref{t:intro-decidable-local-perturbation-lnuca} for most properties. 
\par 
\begin{theorem}
\label{t:general-decidable-local-perturbation-lnuca} 
Let $G= \Z^d$, $d \geq 1$, and let $V$ be a finite vector space. Let $M \subset G$ be a finite subset and let $S= V^{V^M}$. Let $s \in S^G$ be asymptotic to a constant configuration $c \in \LL(V^M,V)^G$. Then it is decidable whether: 
 \begin{enumerate} [\rm (i)]
     \item  $\sigma_s$ is nilpotent; 
     \item  $\sigma_s$ is (eventually) periodic;  
     \item  $\sigma_s$ is Cayley-Hamilton. 
     \item 
     $\sigma_s$ is injective.  \qed 
 \end{enumerate}

\end{theorem}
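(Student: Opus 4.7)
The plan is to adapt the proofs of Theorem~\ref{t:decidable-sparse-perturbation-lnuca-nilpotent} and Theorem~\ref{t:main-injectivity} to the setting where the configuration $s$ is no longer required to be linear globally, but is only asymptotic to a linear constant configuration $c \in \LL(V^M,V)^G$. In both cited proofs, linearity of $s$ was used essentially only (a) to apply \cite[Theorem~22]{kari-beaur} to the linear CA $\sigma_c$, and (b) in the reduction of the problem on a power $\sigma_s^{n_0}$ to a finite-dimensional linear endomorphism. Step (a) is untouched by the present generalization since $c$ remains linear by hypothesis. For step (b), the linear endomorphism will simply be replaced by an arbitrary self-map of a finite set, whose nilpotency, periodicity, or eventual periodicity is trivially decidable.

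For (i)--(iii) I first show, by the same argument as in Theorem~\ref{t:decidable-lnuca-c-nilpotent} (which only uses that $s$ and $c$ agree outside a finite set $E$ and that the composite local maps of $\sigma_s^{n_0} = \sigma_t$ can be read off locally), that if $\sigma_s$ has property $P \in \{$nilpotent, periodic, eventually periodic$\}$ then so does $\sigma_c$. One then decides $P$ for $\sigma_c$ via \cite[Theorem~22]{kari-beaur}; when the answer is positive, one effectively computes $n_0 \geq 1$ and, via \cite[Theorem~6.2]{phung-tcs}, the configuration $t \in (V^{V^N})^G$ with $N = M^{n_0}$ and $\sigma_t = \sigma_s^{n_0}$. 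Outside $EN$ the local maps $t(g)$ coincide with those of $\sigma_c^{n_0}$ and are therefore constant (in the nilpotent case) or equal to the canonical projection $V^N \to V^{\{1_G\}}$ on the appropriate image (in the periodic or eventually periodic case), so the long-term dynamics of $\sigma_s^{n_0}$ are frozen outside the finite window $EN^2$. This reduces everything to a self-map $\varphi$ of the finite set $V^{EN^2}$ (or its subset $\sigma_t(V^G)_{EN^2}$); $\sigma_s$ has property $P$ iff $\varphi$ does, which is a finite check. Item~(iii) reduces to (ii) via Theorem~\ref{t:finite-cayley-hamilton-linear-ca-periodic}, whose argument uses only pointwise linear recurrences in $V$ and therefore extends verbatim to arbitrary $\sigma_s$ over a finite vector space alphabet.

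For (iv), the ``yes'' half of the algorithm in Theorem~\ref{t:main-injectivity} carries over unchanged, since the invertibility criterion of \cite[Theorems~A and~B]{phung-tcs} applies to any NUCA with finite memory over the amenable group $\Z^d$: enumerate finite memories $N$ and asymptotically constant configurations $t \in (V^{V^N})^G$, and test whether the finitely many compositions $\varphi_g(t) = f^+_{gN,t\vert_{gN}} \circ f^+_{gNM, s\vert_{gNM}}$ equal the canonical projection onto $V^{\{g\}}$. For the ``no'' half one works with the difference $d = x_1 - x_2$: the equation $\sigma_s(x_1) = \sigma_s(x_2)$ splits into the outside constraint $\sigma_c(d)\vert_{G \setminus E} = 0$, which is an effectively analyzable group-subshift condition by Theorem~\ref{t:descend-stable-sft} and Theorem~\ref{t:main-effective-construction} applied to the linear $\sigma_c$, together with the finite-window condition $\sigma_s(x_1)\vert_E = \sigma_s(x_1 - d)\vert_E$ on an enlarged window $F \supset EM$. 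Running the two detection procedures in parallel and progressively increasing $F$ yields termination.

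The main obstacle is (iv): the non-linearity of $s$ on the finite region $E$ prevents one from reducing the inside-window test to a single linear algebra calculation as in Theorem~\ref{t:main-injectivity}; instead, one must enumerate candidate pairs $(x_1\vert_F, d\vert_F)$ on the finite window $F$, which is a harmless combinatorial check since $V^F$ is finite. The outside-window constraint is unchanged because it involves only the linear $\sigma_c$, so the group-subshift machinery of Section~\ref{s:9} continues to apply.
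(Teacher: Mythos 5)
Your proposal is correct and follows essentially the same route as the paper, which justifies Theorem~\ref{t:general-decidable-local-perturbation-lnuca} only by asserting that the proofs of Theorem~\ref{t:decidable-sparse-perturbation-lnuca-nilpotent} and Theorem~\ref{t:main-injectivity} carry over \emph{mutatis mutandis}; you have correctly isolated the only two uses of linearity of $s$ (the appeal to \cite[Theorem~22]{kari-beaur}, which concerns only the still-linear constant part $c$, and the reduction to a finite-dimensional endomorphism, which harmlessly becomes an arbitrary self-map of a finite set), and your reduction of (iii) to (ii) via the linear-recurrence argument is valid for non-linear $\sigma_s$ since it only uses $P(\sigma_s)=0$ applied along orbits. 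The one step needing genuine adaptation is (iv), where $\Ker\sigma_s$ no longer makes sense; your replacement by the difference configuration $d=x_1-x_2$, with the linear outside constraint $\sigma_c(d)\vert_{G\setminus E}=0$ handled by the group-subshift machinery of Section~\ref{s:9} and the non-linear inside constraint reduced to a finite enumeration over the window, is exactly the right fix and is consistent with what the paper leaves implicit.
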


\bibliographystyle{siam}

\end{document}